\documentclass[12pt, letterpaper]{article}

\usepackage{geometry}
\usepackage{color}
\usepackage{verbatim}
\usepackage[noadjust]{cite}
\geometry{letterpaper}

\usepackage[utf8x]{inputenc}
\usepackage{t1enc}
\usepackage[english]{babel}
\usepackage{pgfplots}
\pgfplotsset{compat=1.15}
\usepackage{mathrsfs}
\usetikzlibrary{arrows}

\definecolor{ffqqqq}{rgb}{1.,0.,0.}
\definecolor{uuuuuu}{rgb}{0.26666666666666666,0.26666666666666666,0.26666666666666666}

\usepackage{amsmath,amssymb,amsthm}
\usepackage{mathrsfs,esint}
\usepackage{graphicx,wrapfig}
\usepackage[format=hang]{caption}
\newcommand{\R}{\mathbb{R}}

\newcommand{\N}{\mathbb{N}}

\newcommand{\D}{\mathbb{D}}

\newcommand{\Cc}{\mathcal{C}}
\newcommand{\Ha}{\mathcal{H}}
\newcommand{\Leb}{\mathcal{L}}

\newcommand{\Ss}{\mathcal{S}}

\newcommand{\eps}{\varepsilon}

\newcommand{\loc}{\text{loc}}

\newcommand{\phii}{\varphi}

\newcommand{\bmat}{\begin{bmatrix}}
\newcommand{\emat}{\end{bmatrix}}

\newcommand{\wtil}{\widetilde}

\providecommand*{\vint}[1]{\mathchoice
          {\mathop{\vrule width 5pt height 3 pt depth -2.5pt
                  \kern -9pt \kern 1pt\intop}\nolimits_{\kern -5pt{#1}}}
          {\mathop{\vrule width 5pt height 3 pt depth -2.6pt
                  \kern -6pt \intop}\nolimits_{\kern -3pt{#1}}}
          {\mathop{\vrule width 5pt height 3 pt depth -2.6pt
                  \kern -6pt \intop}\nolimits_{\kern -3pt{#1}}}
          {\mathop{\vrule width 5pt height 3 pt depth -2.6pt
                  \kern -6pt \intop}\nolimits_{\kern -3pt{#1}}}}

\DeclareMathOperator{\Ext}{Ext}

\DeclareMathOperator{\dist}{dist}
\DeclareMathOperator{\diam}{diam}
\DeclareMathOperator{\rad}{rad}
\DeclareMathOperator{\supt}{supt}

\numberwithin{equation}{section}
\theoremstyle{plain}
\newtheorem{thm}[equation]{Theorem}

\newtheorem{prop}[equation]{Proposition}
\newtheorem{cor}[equation]{Corollary}
\newtheorem{lem}[equation]{Lemma}

\theoremstyle{definition}

\newtheorem{defn}[equation]{Definition}

\newtheorem{remark}[equation]{Remark}

\newtheorem{example}[equation]{Example}

\makeatletter
\def\blfootnote{\xdef\@thefnmark{}\@footnotetext}
\makeatother

\begin{document}

\title{Non-locality, non-linearity, and existence of solutions to the Dirichlet problem for least gradient functions in metric measure spaces}

\blfootnote{2020 {\it Mathematics Subject Classification.} 46E36, 49Q05, 49Q20, 31E05.}
\blfootnote{{\it Keywords and phrases.} Metric measure space, bounded variation, least gradient, mean curvature, Dirichlet problem.}

\author{Josh Kline\thanks{The author was partially supported by the NSF grants \#DMS-1800161 and \#DMS-2054960.  The author would like to thank Nageswari Shanmugalingam for her kind encouragement and many fruitful discussions regarding this project, as well as Marie Snipes for her helpful insight. The author would also like to thank Piotr Rybka for pointing out the reference \cite{RS2}, Panu Lahti and Gareth Speight for helpful feedback and suggestions, and finally the anonymous referees whose comments and corrections helped to improve this paper.  In particular the author would like to thank the referee for pointing out Example~\ref{ex:NotConverge}.} }

\date{August 14, 2022}

\maketitle

\begin{abstract}
We study the Dirichlet problem for least gradient functions for domains in metric spaces equipped with a doubling measure and supporting a (1,1)-Poincar\'e inequality when the boundary of the domain satisfies a positive mean curvature condition.  In this setting, it was shown by Mal\'y, Lahti, Shanmugalingam, and Speight that solutions exist for continuous boundary data.  We extend these results, showing existence of solutions for boundary data that is approximable from above and below by continuous functions.  We also show that for each $f\in L^1(\partial\Omega),$ there is a least gradient function in $\Omega$ whose trace agrees with $f$ at points of continuity of $f$, and so we obtain existence of solutions for boundary data which is continuous almost everywhere.  This is in contrast to a result of Spradlin and Tamasan, who constructed an $L^1$-function on the unit circle which has no least gradient solution in the unit disk in $\R^2.$  Modifying the example of Spradlin and Tamasan, we show that the space of solvable $L^1$-functions on the unit circle is non-linear, even though the unit disk satisfies the positive mean curvature condition.

\end{abstract}

\section{Introduction} 
Given a function $f$ on the boundary of a domain $\Omega,$ the Dirichlet problem for least gradient functions is the problem of minimizing 
$\|Du\|(\Omega)$ over all $u\in BV(\Omega)$ with trace $Tu=f$ a.e.\ on the boundary.  This form of the problem, where the boundary condition is attained in the sense of traces, was originally introduced in the Euclidean setting by Sternberg, Williams, and Ziemer in \cite{SWZ}.  There they showed that if the boundary of the domain has non-negative mean curvature and is not locally area minimizing, then existence and uniqueness of solutions is guaranteed for continuous boundary data.  Furthermore, they showed that the imposed curvature conditions are necessary to guarantee existence of solutions, and if the boundary data is of class $C^{0,\alpha}$ for $0<\alpha\le 1$, then the solution is of class $C^{0,\alpha/2},$ provided the boundary of the domain has strictly positive mean curvature.  Their proof makes uses an important result of Bombieri, De Giorgi, and Giusti from \cite{BDG}, which states that characteristic functions of superlevel sets of least gradient functions are themselves of least gradient.  Using this, the authors constructed a least gradient solution by building its superlevel sets so that each was of least gradient and compatible with the boundary data. 

Since the appearance of \cite{SWZ}, existence, uniqueness, and regularity of the above least gradient problem have been studied extensively in the Euclidean setting.  For instance, we refer the interested reader to the following sampling \cite{G1,G2,Gnon,RS1,RS2,GRS,MRL} and the references therein.  In particular, it was shown in \cite{MRL} that there is an equivalence between least gradient solutions and solutions to the Dirichlet problem for the 1-Laplacian.  Moreover, the authors showed that in strictly convex domains, uniqueness of solutions may fail for discontinuous boundary data.  For more on the study of uniqueness of solutions, see \cite{Gnon}.  

In recent decades, a theory of analysis on metric measure spaces has been developed under the assumptions that the measure is doubling and that the space supports a Poincar\'e inequality, see for example \cite{BB, H ,HKST,AGS}.  Miranda~Jr.\ extended the definition of BV functions to this setting in \cite{MMJr}, leading to the development of a theory of least gradient functions and associated Dirichlet problems in metric spaces \cite{A,AMP,HKLS,KKLS,KLLS,LMSS,LS,MSS}.  In \cite{LMSS}, Lahti, Mal\'y, Shanmugalingam, and Speight studied the Dirichlet problem for least gradient functions, originally introduced in \cite{SWZ}, in the setting of a doubling metric measure space supporting a $(1,1)$-Poincar\'e inequality.  There they introduced a notion of positive mean curvature which makes sense in the metric setting (Definition~\ref{defn:MeanCurv} below).  They showed that if a domain satisfies this condition and if compatibility conditions are assumed between the measure and the codimension 1 Hausdorff measure of the boundary of the domain, then existence of solutions is guaranteed for continuous boundary data.  Their strategy adapts the argument from \cite{SWZ} to the metric setting, similarly building a solution by constructing its superlevel sets in an appropriate manner. 

 In contrast to \cite{SWZ}, \cite{LMSS} also provided examples in the weighted Euclidean setting which show that even for Lipschitz boundary data, solutions may fail to be continuous up to the boundary and may fail to be unique.  However, it was recently shown in \cite{Z} that continuous solutions exist for the weighted Euclidean least gradient problem with continuous boundary data, provided the weights are positive, bounded away from zero, and of class $C^2.$ This result, valid in dimensions $n\ge 2,$ extends the earlier result from \cite{JMN}, which guarantees existence of continuous solutions in low dimensions for $C^{1,1}$ weights which are positive and bounded away from zero.  In \cite{JMN}, it was also shown that for such weights and continuous boundary data, solutions to the weighted problem in dimensions $n\ge 2$ are unique.  For more on the weighted Euclidean least gradient problem, anisotropic formulations, and connections of these problems to conductivity imaging, see \cite{MNT,Mor,G1,G2}.  Such applications provide additional motivation for the study of the least gradient problem in the metric setting.  For a recent work on Gauss-Green formulas and connections to the least gradient problem in the metric setting, see \cite{GM}.

From \cite{SWZ} in the Euclidean setting, certain non-negative curvature conditions are required to guarantee existence of solutions to the Dirichlet problem for least gradients.  From \cite{AG,G}, it is also known that the the trace class of BV functions on a Euclidean Lipschitz domain is the $L^1$-class of its boundary.  In fact, analagous trace and extension results hold for BV functions in the metric setting as well, see \cite{LS,MSS}.  Therefore if a Euclidean domain satisfies the curvature conditions from \cite{SWZ}, it is natural to ask whether all $L^1$-functions on the boundary of such a domain admit solutions to the Dirichlet problem for least gradient functions.  This question was answered in the negative by Spradlin and Tamasan in \cite{ST}.  A certain fat Cantor set was constructed on the boundary of the unit disk in $\R^2$ in \cite{ST}, such that the characteristic function of that set is not the trace of a least gradient function in the unit disk, despite the fact that the unit disk satisfies the necessary curvature conditions.  Thus the question of which $L^1$-functions arise as traces of functions of least gradient is open even in the Euclidean setting.  The goal of this paper is to investigate the conditions sufficient to guarantee existence of solutions to the Dirichlet problem in both the Euclidean and metric settings.  

After introducing the necessary definitions and background information in Section~2, we begin Section 3 by examining the example presented in \cite{ST} in the Euclidean setting of the unit disk (Example~\ref{ex:GornyKa} below).  We modify this example in such a way as to obtain a solution (Example~\ref{ex:NonLin} below) which demonstrates that the set of $L^1$-functions on the boundary of the unit disk for which solutions exist is non-linear.  Namely, we show that the example function from \cite{ST} can be expressed as the sum of two functions, each of which arise as the trace of a least gradient function.  Moreover, our example shows that even in the case of the unit disk, the Dirichlet problem is non-local in the following sense.  There is a boundary data $f$ for which a least gradient solution to the Dirichlet problem exists, but $\eta f$ has no solution for a suitable compactly supported Lipschitz function $\eta$ on the boundary.  This example illustrates the significant difference between the Dirichlet problem for least gradient functions and the Dirichlet problem for $p$-harmonic functions when $p>1$, see \cite{BB}. 

In Sections 4 and 5, we obtain sufficient conditions for existence of solutions in the metric setting under the following standing assumptions: 
\begin{itemize}
\item $(X,d,\mu)$ is a complete metric measure space supporting a $(1,1)$-Poincar\'e inequality, with $\mu$ a doubling Borel regular measure.
\item $\Omega\subset X$ is a bounded domain with $\mu(X\setminus\Omega)>0.$
\item $\partial\Omega$ has positive mean curvature as in \cite{LMSS}, see Definition~\ref{defn:MeanCurv}.
\item $\Ha(\partial\Omega)<\infty$, $\Ha\big|_{\partial\Omega}$ is doubling, and $\Ha\big|_{\partial\Omega}$ is lower codimension 1 Ahlfors regular, see \eqref{eq:CodAhlfReg}.
\end{itemize}
Here, $\Ha$ is a codimension 1 Hausdorff measure on $\partial\Omega$, see \eqref{eq:Ha}.  The examples presented in Section 3 are in the setting of the unit disk in $\R^2,$ which satisfies the above assumptions as well.  The following is the first of the main results of the paper, proved in Section 4. 

\begin{thm}\label{thm:SeqApprox}  Let $f\in L^\infty(\partial\Omega)$, and for each $k\in\N$, let $g_k,h_k\in C(\partial\Omega)$ be such that $g_k,h_k\to f$ as $k\to\infty$ pointwise $\Ha$-a.e.\ on $\partial\Omega$, with 
$$g_k\le g_{k+1}\le f\le h_{k+1}\le h_k$$
$\Ha$-a.e.\ on $\partial\Omega.$  Then, there is a function $u\in BV(\Omega)$ which is the minimal solution to the Dirichlet problem with boundary data $f$.
\end{thm}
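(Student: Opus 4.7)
My plan is to realize the desired minimal solution as the pointwise monotone increasing limit of least gradient functions with continuous boundary data $g_k$, using the $v_k$ coming from $h_k$ as a decreasing upper envelope. Because each $g_k$ and $h_k$ is continuous, the existence theorem of \cite{LMSS} provides least gradient functions $u_k, v_k \in BV(\Omega)$ with $Tu_k = g_k$ and $Tv_k = h_k$ $\Ha$-a.e.\ on $\partial\Omega$. The comparison principle for least gradient functions, which in this setting follows from comparing superlevel sets as minimizers of perimeter, applied to the chain $g_k \le g_{k+1} \le h_{k+1} \le h_k$ yields
\[ u_k \le u_{k+1} \le v_{k+1} \le v_k \quad \text{a.e.\ in } \Omega, \]
and comparison against constant boundary data gives uniform $L^\infty$ bounds since $f \in L^\infty(\partial\Omega)$. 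Standard $BV$-extension estimates in the present metric setting combined with the minimality of $u_k$ and $v_k$ yield uniform bounds on $\|Du_k\|(\Omega)$ and $\|Dv_k\|(\Omega)$, so by dominated convergence and lower semicontinuity of the total variation the limits $u, v$ exist pointwise a.e., belong to $BV(\Omega)$, and satisfy $u \le v$.

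Next I would identify the traces. Because the trace operator is monotone with respect to pointwise ordering of interior values (traces arise as limits of interior averages along boundary approach paths), the inequalities $u_k \le u \le v_k$ descend to $g_k \le Tu \le h_k$ and likewise for $v$ $\Ha$-a.e.\ on $\partial\Omega$, so letting $k \to \infty$ forces $Tu = Tv = f$. Minimality of $u$ among all least gradient competitors with trace $f$ is then automatic: if $w \in BV(\Omega)$ is any such competitor, comparison with $u_k$ (whose trace $g_k$ satisfies $g_k \le f = Tw$) yields $u_k \le w$ a.e., whence $u \le w$ a.e.

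The step I expect to be the main obstacle is verifying that the limit $u$ is itself of least gradient. My route is through the superlevel set description: each set $\{u_k > t\}$ minimizes perimeter in $\Omega$ relative to the trace determined by $g_k$, and by the monotonicity of the sequence its increasing union equals $\{u > t\}$ up to an $\Ha$-null set. Using lower semicontinuity of perimeter in the metric setting, together with a compact perturbation argument that compares any proposed modification of $\{u > t\}$ against the corresponding modification of $\{u_k > t\}$, I would show that the perimeter minimality passes to the monotone limit. Reconstructing $u$ from its now-minimal level sets via the coarea formula, and invoking the metric-space version of the Bombieri--De Giorgi--Giusti result already used in \cite{LMSS}, yields that $u$ is of least gradient. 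The delicate point is controlling how compact perturbations of $\{u > t\}$ interact with the varying boundary traces $g_k$, which is where the positive mean curvature hypothesis on $\partial\Omega$ together with the lower codimension $1$ Ahlfors regularity of $\Ha|_{\partial\Omega}$ are essential.
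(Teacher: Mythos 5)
Your outline matches the paper's strategy at a high level (monotone approximation by solutions for $g_k$ and $h_k$, squeeze the traces, pass least gradient to the $L^1$ limit), but there is a genuine gap at the very first step: the ``comparison principle for least gradient functions'' you invoke does not hold in this setting for arbitrary solutions. Solutions to the Dirichlet problem here are not unique even for Lipschitz boundary data, and Example~\ref{ex:NotConverge} makes the failure concrete: for the data there, the $u_k$ converge to the minimal solution $\chi_{\Omega\cap\{|y|>1/\sqrt 2\}}$ while the $v_k$ converge to the distinct maximal solution $\chi_{\Omega\cap\{|x|<1/\sqrt 2\}}$. If you merely take \emph{some} least gradient function $u_k$ with trace $g_k$, the chain $u_k\le u_{k+1}\le v_{k+1}\le v_k$ can fail, and your later step ``$g_k\le Tw$ implies $u_k\le w$'' fails for the same reason. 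The fix, which is the real content of the argument, is to first establish that for continuous data there is a well-defined \emph{minimal} solution (Proposition~\ref{prop:MinSoln}, proved by assembling the minimal weak solution sets $E_t$ for the superlevel sets $F_t$ of a continuous extension and checking $u\le u'$ off an exceptional null set), and then to prove comparison only for minimal solutions (Corollary~\ref{cor:CompThm}), via the observation that $\min\{u,v\}$ and $\max\{u,v\}$ are again solutions for the respective data (Lemma~\ref{lem:MinMaxSoln}, using submodularity of the BV energy and a direct trace computation). The final minimality of $u$ likewise goes through $\min\{u_k,u'\}$ being a solution for $g_k$ rather than a bare comparison.

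Two smaller points. First, you flag the least gradient property of the limit as the main obstacle and propose a level-set/compact-perturbation argument; this is unnecessary, since $L^1_{\loc}$-limits of least gradient functions are of least gradient by \cite[Proposition~3.1]{HKLS}, which the paper cites directly. Second, your trace identification presumes $Tu(x)$ exists and then squeezes it between $g_k(x)$ and $h_k(x)$; the cleaner route is to estimate $\fint_{B(x,r)\cap\Omega}|u-f(x)|\,d\mu$ directly, splitting over $\{u>f(x)\}$ and $\{u\le f(x)\}$ and dominating by the corresponding averages of $|v_k-h_k(x)|$ and $|u_k-g_k(x)|$, which tend to zero because $u_k$ and $v_k$ have traces $g_k$ and $h_k$ at $x$. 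These are repairable, but the reliance on an unrestricted comparison principle is not: without the detour through minimal solutions the proof does not go through.
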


The following is an equivalent reformulation of the hypotheses of Theorem~\ref{thm:SeqApprox}:
\begin{itemize}
\item Let $f\in L^\infty(\partial\Omega)$ and assume that there is a lower semicontinuous function $g$ on $\partial\Omega$ and an upper semicontinuous function $h$ on $\partial\Omega$ such that $g\le h$ everywhere on $\partial\Omega$ and $g=h=f$ $\Ha$-a.e.\ on $\partial\Omega.$
\end{itemize}

The key step is to show existence of minimal solutions for continuous boundary data, from which we obtain a comparison theorem for minimal solutions; note that uniqueness of solutions is not guaranteed (see \cite{LMSS}), and so a more general comparison theorem will not hold for least gradient functions.  This is in contrast to $p$-harmonic functions with $p>1$, which always satisfy a comparison theorem (see \cite{BB}).  

As a consequence, we obtain the following result regarding characteristic functions of subsets of the boundary of the domain.  

\begin{thm}\label{thm:HZeroBoundary}
Let $F\subset\partial\Omega$ be measurable, and let $\wtil\partial F$ denote the boundary of $F$ relative to $\partial\Omega$. If $\Ha(\wtil\partial F)=0,$ then there is a function $u\in BV(\Omega)$ which is the minimal solution to the Dirichlet problem with boundary data $\chi_F.$ 
\end{thm}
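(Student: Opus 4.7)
The plan is to reduce this statement directly to Theorem~\ref{thm:SeqApprox} via the semicontinuous sandwich reformulation stated immediately after it. The guiding intuition is that $\chi_F$, viewed as a function on the metric space $\partial\Omega$, is continuous at every point of $\partial\Omega \setminus \wtil\partial F$, and the hypothesis $\Ha(\wtil\partial F) = 0$ says this exceptional set is $\Ha$-negligible. So $\chi_F$ ought to be squeezable between a natural lower semicontinuous minorant and a natural upper semicontinuous majorant that agree $\Ha$-a.e.

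Concretely, I would let $U := \operatorname{int}_{\partial\Omega} F$ and $C := \overline{F}^{\partial\Omega}$ denote the interior and closure of $F$ in the relative topology of $\partial\Omega$, so that $\wtil\partial F = C \setminus U$, and set $g := \chi_U$ and $h := \chi_C$. Since $U$ is open in $\partial\Omega$, all superlevel sets $\{g > t\}$ are open, and $g$ is lower semicontinuous on $\partial\Omega$; since $C$ is closed in $\partial\Omega$, all sublevel sets $\{h < t\}$ are open, and $h$ is upper semicontinuous. A short case check, according to whether a point lies in $U$, in $\wtil\partial F$, or in $\partial\Omega \setminus C$, gives the pointwise inequalities $g \le \chi_F \le h$ on all of $\partial\Omega$, and at the same time shows $g = h$ off $\wtil\partial F$.

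Combining these observations with the hypothesis $\Ha(\wtil\partial F) = 0$ yields $g = \chi_F = h$ $\Ha$-a.e.\ on $\partial\Omega$, with $g \le h$ everywhere. Since $\chi_F \in L^\infty(\partial\Omega)$, this is precisely the equivalent reformulation of the hypotheses of Theorem~\ref{thm:SeqApprox} recorded just below its statement, applied to $f = \chi_F$. Invoking that theorem then produces the desired minimal least gradient function $u \in BV(\Omega)$ with boundary data $\chi_F$.

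There is no real obstacle here, as the substantive work is already packaged into Theorem~\ref{thm:SeqApprox}; the one subtlety worth flagging is that the interior, closure, and boundary operations must be carried out in the relative topology of $\partial\Omega$, not in the ambient space $X$, so that $g$ and $h$ are genuinely semicontinuous functions on $\partial\Omega$ and $\wtil\partial F$ really is the set of potential discontinuities of $\chi_F$ on $\partial\Omega$.
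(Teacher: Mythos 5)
Your proposal is correct and follows essentially the same route as the paper: the paper also reduces Theorem~\ref{thm:HZeroBoundary} to Theorem~\ref{thm:SeqApprox} by sandwiching $\chi_F$ between approximants determined by the relative interior and closure of $F$, except that it builds the monotone continuous sequences explicitly as $\eta_\eps^+(x)=\max\{1-\dist(x,F)/\eps,\,0\}$ and $\eta_\eps^-(x)=\min\{\dist(x,\partial\Omega\setminus F)/\eps,\,1\}$ rather than passing through the semicontinuous reformulation. The one thing to keep in mind is that your version leans on the asserted (but not proved) equivalence of that reformulation with the hypotheses of Theorem~\ref{thm:SeqApprox}, which requires knowing that a bounded lower (resp.\ upper) semicontinuous function on $\partial\Omega$ is the increasing (resp.\ decreasing) pointwise limit of continuous functions, e.g.\ via the inf-convolutions $g_k(x)=\inf_y\left(g(y)+k\,d(x,y)\right)$; the paper's explicit distance-function construction sidesteps this.
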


 In Section 5, we continue to adopt the setting and assumptions from the previous section.  By adapting an argument from \cite{G2} in the Euclidean setting, we use the metric technology of discrete convolution to obtain the following result.

\begin{thm}\label{thm:CtyPointSoln}
Given $f\in L^1(\partial\Omega)$, there exists a least gradient function $u\in BV(\Omega)$ such that for all $x\in\partial\Omega$ such that $f$ is continuous at $x$, we have that $Tu(x)=f(x).$  In particular, if $f$ is continuous $\Ha$-a.e.\ on $\partial\Omega,$ then there is a solution to the Dirichlet problem with boundary data $f$. 
\end{thm}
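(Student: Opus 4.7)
\smallskip
\noindent\textit{Proof plan.} The strategy is to regularize $f\in L^1(\partial\Omega)$ by continuous approximations $\{f_k\}$ using discrete convolution adapted to $\partial\Omega$, solve the Dirichlet problem for each $f_k$ via the existence result of \cite{LMSS}, extract a BV-limit by compactness, and then identify the trace of the limit at every continuity point of $f$ using the comparison machinery developed in Section~4.

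Since $\Ha|_{\partial\Omega}$ is doubling, for each $k\in\N$ I would fix a locally finite cover of $\partial\Omega$ by balls $B_{i,k}$ of radius comparable to $1/k$ with a subordinate Lipschitz partition of unity $\{\phi_{i,k}\}$, and define the discrete convolution
\begin{equation*}
f_k(x)=\sum_i \phi_{i,k}(x)\jint_{B_{i,k}\cap\partial\Omega} f\,d\Ha.
\end{equation*}
Standard properties of discrete convolutions give $f_k\in C(\partial\Omega)$, $f_k\to f$ in $L^1(\partial\Omega)$ with $\|f_k\|_{L^1(\partial\Omega)}\le C\|f\|_{L^1(\partial\Omega)}$, and, crucially, pointwise convergence $f_k(x_0)\to f(x_0)$ at every point of continuity $x_0$ of $f$ (each average in the sum defining $f_k(x_0)$ is over a ball close to $x_0$, where $f$ is already close to $f(x_0)$). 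For each $k$, select a minimal least gradient function $u_k\in BV(\Omega)$ with trace $f_k$. Using a bounded extension operator $E:L^1(\partial\Omega)\to BV(\Omega)$ from the trace theory of \cite{LS,MSS}, the least gradient property gives $\|Du_k\|(\Omega)\le \|DE(f_k)\|(\Omega)\le C\|f\|_{L^1(\partial\Omega)}$, and a Poincar\'e inequality together with trace bounds yields a uniform estimate $\|u_k\|_{BV(\Omega)}\le C\|f\|_{L^1(\partial\Omega)}$. BV compactness then extracts a subsequence $u_{k_j}\to u$ in $L^1(\Omega)$ for some $u\in BV(\Omega)$; lower semicontinuity of total variation plus an $L^1$-approximation argument shows that $u$ is itself of least gradient.

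The main obstacle is the final step: showing $Tu(x_0)=f(x_0)$ at every continuity point $x_0$. Fix such an $x_0$ with $c:=f(x_0)$, let $\varepsilon>0$, and pick $r>0$ so small that $|f-c|<\varepsilon$ on $B(x_0,r)\cap\partial\Omega$; then $|f_k-c|<2\varepsilon$ on a slightly smaller neighborhood for $k$ large. The plan is to construct continuous boundary data $g^\pm_\varepsilon\in C(\partial\Omega)$ with $g^-_\varepsilon\le f_k\le g^+_\varepsilon$ everywhere on $\partial\Omega$ and with $g^\pm_\varepsilon\equiv c\pm 2\varepsilon$ on a neighborhood of $x_0$, and then invoke the comparison theorem for minimal solutions established in Section~4 to sandwich $u_k$ between the minimal solutions for $g^\pm_\varepsilon$. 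Because $\partial\Omega$ has positive mean curvature, the minimal solutions for $g^\pm_\varepsilon$ have traces equal to $c\pm 2\varepsilon$ near $x_0$, and this forces $Tu_k(x_0)\in[c-2\varepsilon,c+2\varepsilon]$ along the subsequence. Passing to the $L^1$-limit through the same sandwich yields $Tu(x_0)\in[c-2\varepsilon,c+2\varepsilon]$, and sending $\varepsilon\to 0$ gives $Tu(x_0)=c$. The final assertion of the theorem is then immediate: if $f$ is continuous $\Ha$-a.e., then $Tu=f$ $\Ha$-a.e., so $u$ solves the Dirichlet problem with boundary data $f$.
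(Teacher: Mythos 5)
Your overall architecture --- discrete convolution $f_\eps$ via a Lipschitz partition of unity subordinate to a cover of $\partial\Omega$, minimal solutions $u_\eps$ for the continuous data $f_\eps$, a uniform $BV$ bound via the extension operator and a Maz'ya--Poincar\'e inequality, compactness, and lower semicontinuity --- is exactly the paper's (Lemmas~\ref{lem:DiscConvBound}, \ref{lem:UnifCty}, and the first half of the proof of Theorem~\ref{thm:CtyPointSoln}). The gap is in the final step, which is the crux of the theorem. Your barrier argument needs the minimal solutions $w^\pm$ for the continuous data $g^\pm_\eps$ to attain their traces \emph{at the particular point} $x_0$; but the existence theory for continuous data only gives $Tw^\pm=g^\pm_\eps$ $\Ha$-a.e.\ on $\partial\Omega$, and $x_0$ (an arbitrary continuity point of an $L^1$ function) may well lie in the exceptional null set. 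Asserting that positive mean curvature ``forces'' the barrier traces to be attained near $x_0$ is precisely the statement that has to be proved; it is not a formal consequence of the comparison theorem. The paper supplies the mechanism: by Definition~\ref{defn:MeanCurv} the minimal weak solution set for $\chi_{B(x_0,\delta)}$ contains the full interior neighborhood $B(x_0,\phi_{x_0}(\delta))\cap\Omega$ up to $\mu$-measure zero, and Lemmas~\ref{lem:DisjtSoln} and \ref{lem:SubsetSoln} (minimal solution sets of boundary data sets that are disjoint on $\partial\Omega$ are $\mu$-a.e.\ disjoint in $\Omega$; nested boundary sets give nested solution sets), applied to the superlevel sets $E^\eps_t$ of $u_\eps$ for $t\ge f(x_0)+\eta$ and $t\le f(x_0)-\eta$, yield the pointwise bound $|u_\eps(y)-f(x_0)|\le\eta$ for $\mu$-a.e.\ $y\in B(x_0,r)\cap\Omega$, valid for all sufficiently small $\eps$ and $r$ simultaneously.

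A second, related problem: since $\sup_k f_k$ need not be bounded away from $x_0$, your barriers must depend on $k$, so even granting $Tw^\pm_k(x_0)=c\pm2\eps$ you only obtain $\lim_{r\to0}\fint_{B(x_0,r)\cap\Omega}|u_k-c|\,d\mu\le C\eps$ at a rate that may degenerate as $k\to\infty$; transferring this to the limit function $u$ requires interchanging $\lim_{r\to 0}$ with $\lim_{k\to\infty}$, which the sandwich alone does not justify. The uniform-in-$\eps$ pointwise bound on a full interior neighborhood is exactly what legitimizes this interchange (the paper then finishes with dominated convergence). So your plan is repairable, but the missing ingredients --- the two lemmas on minimal solution sets and the quantitative use of the curvature hypothesis on balls centered at $x_0$ --- constitute the technical heart of the proof rather than a routine detail.
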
 

 This result was established for strictly convex, Euclidean domains in \cite{G2}.  Our extension to the metric setting includes Euclidean domains that are not strictly convex but satisfy the positive mean curvature condition.  For example, the capped cylinder described in Remark~\ref{rem:Cylinder} below is not strictly convex, but satisfies the positive mean curvature condition.

Although Theorem~\ref{thm:SeqApprox}, Theorem~\ref{thm:HZeroBoundary}, and Theorem~\ref{thm:CtyPointSoln} provide sufficient conditions on the boundary data to guarantee existence of solutions in this setting, Example~\ref{ex:NonLin} below shows that these conditions are not sharp (see Remarks~\ref{rem:NotSharp1} and \ref{rem:NotSharp2}).  It seems that even for sufficiently regular domains, a characterization of $L^1$ boundary data admitting solutions is still unknown.

\section{Preliminaries}

\subsection{General metric measure spaces and BV theory}

Throughout this paper, we assume that $(X,d,\mu)$ is a complete metric measure space equipped with a doubling Borel regular measure $\mu$.  By \emph{doubling}, we mean that there exists a constant $C_D\ge 1$ such that 
$$0<\mu(B(x,2r)\le C_D\mu(B(x,r))<\infty$$ for all $x\in X$ and $r>0.$  By iterating the doubling condition, there are constants $C\ge 1$ and $Q>1$ such that 
\begin{equation}\label{eq:LMBExp}
\frac{\mu(B(y,r))}{\mu(B(x,R))}\ge C^{-1}\left(\frac{r}{R}\right)^Q
\end{equation}
for every $0<r\le R$ and $y\in B(x,R).$

A complete metric space equipped with a doubling measure is proper, that is, closed and bounded sets are compact.  Thus for any open set $\Omega\subset X$, we define $L_{\loc}^1(\Omega)$ as the space of functions that are in $L^1(\Omega')$ for every $\Omega'\Subset\Omega,$ i.e., for every open set $\Omega'$ such that $\overline{\Omega'}$ is a compact subset of $\Omega.$  Also, if $A$ and $B$ are subsets of $X$, we use the notation $A\sqsubset B$ to mean that $\mu(A\setminus B)=0.$  By a {\it domain}, we mean a non-empty connected open set in $X$.  

Given a function $u:X\to\overline\R,$ we say that a Borel function $g:X\to[0,\infty]$ is an \emph{upper gradient} of $u$ if the following inequality holds for all non-constant compact rectifiable curves $\gamma:[a,b]\to X,$
$$|u(y)-u(x)|\le\int_{\gamma} g\,ds,$$
whenever $u(x)$ and $u(y)$ are both finite, and $\int_{\gamma} g\,ds=\infty$ otherwise.  Here $x$ and $y$ denote the endpoints of the curve $\gamma$.  Upper gradients were originally introduced in \cite{HeK}.  

Let $\widetilde N^{1,1}(X)$ be the class of all functions in $L^1(X)$ for which there exists an upper gradient in $L^1(X).$  For $u\in\widetilde N^{1,1}(X),$ we define 
$$\|u\|_{\widetilde N^{1,1}(X)}=\|u\|_{L^1(X)}+\inf_g\|g\|_{L^1(X)},$$
where the infimum is taken over all upper gradients $g$ of $u$.  Now, we define an equivalence relation in $\widetilde N^{1,1}(X)$ by $u\sim v$ if and only if $\|u-v\|_{\widetilde N^{1,1}(X)}=0.$  

The \emph{Newtonian space} $N^{1,1}(X)$ is defined as the quotient $\widetilde N^{1,1}(X)/\sim,$ and it is equipped with the norm $\|u\|_{N^{1,1}(X)}=\|u\|_{\widetilde N^{1,1}(X)}.$  One can analogously define $N^{1,1}(\Omega)$ for an open set $\Omega\subset X.$ For more on Newtonian spaces, see \cite{S}, \cite{HKST}, or \cite{BB}.

We now define functions of bounded variation on metric spaces, following the definition introduced by Miranda~Jr.\ in \cite{MMJr}.  For $u\in L^1_{\loc}(X),$ we define the \emph{total variation} of $u$  by 
\begin{equation*}
\|Du\|(X)=\inf\left\{\liminf_{i\to\infty}\int_X g_{u_i}\,d\mu:N^{1,1}_\loc(X)\ni u_i\to u\text{ in }L^1_{\loc}(X)\right\},
\end{equation*}
where $g_{u_i}$ are upper gradients of $u_i.$  For an open set $\Omega\subset X,$ we analogously define $\|Du\|(\Omega),$ and for an arbitrary $A\subset X,$ we define 
$$\|Du\|(A)=\inf\left\{\|Du\|(\Omega):A\subset\Omega, \Omega\subset X\text{ open}\right\}.$$  For $u\in L^1(X),$ we say that $u\in BV(X)$ ($u$ is of \emph{bounded variation}) if $\|Du\|(X)<\infty.$ We equip $BV(X)$ with the norm
\begin{equation*}
\|u\|_{BV(X)}=\|u\|_{L^1(X)}+\|Du\|(X).
\end{equation*}
We note that this definition coincides with the standard definition of the BV class in the Euclidean setting, see for example \cite{EG,AFP}.  See also \cite{A} and \cite{AMP} for more on BV theory in the metric setting.   

For $u\in BV(X),$ it was shown in \cite[Theorem~3.4]{MMJr} that $\|Du\|(\cdot)$ is a finite Radon measure on $X.$  Moreover, for an open set $\Omega\subset X,$ if $u_k\to u$ in $L^1_{\loc}(\Omega),$ then
\begin{equation}\label{eq:LowerSemiCty}
\|Du\|(\Omega)\le\liminf_{k\to\infty}\|Du_k\|(\Omega).
\end{equation}
That is, the BV energy is lower semi-continuous with respect to convergence in $L^1$. \cite[Proposition~3.6]{MMJr}

We say that a measurable set $E\subset X$ is of \emph{finite perimeter} if $\|D\chi_E\|(X)<\infty,$ and we denote the \emph{perimeter} of $E$ in $\Omega$ by 
$$P(E,\Omega):=\|D\chi_E\|(\Omega).$$

We have the following coarea formula, given by \cite[Proposition~4.2]{MMJr}. If $\Omega\subset X$ is an open set and $u\in L^1_\loc(\Omega),$ then 
\begin{equation}\label{eq:Coarea}
\|Du\|(\Omega)=\int_{-\infty}^\infty P(\{u>t\},\Omega)\,dt,
\end{equation}
and if $u\in BV(X),$ then the above holds with $\Omega$ replaced by any Borel set $A\subset\Omega.$ 

\subsection{Poincar\'e inequality and consequences}

We will also assume throughout this paper that $X$ supports a $(1,1)$-\emph{Poincar\'e inequality}, meaning that there are positive constants $\lambda$ and $C_P$ such that for every ball $B=B(x,r),$ every locally integrable function $u$, and every upper gradient $g$ of $u$, we have that 
\begin{equation*}
\fint_B|u-u_B|d\mu\le C_Pr\fint_{\lambda B}g\,d\mu,
\end{equation*}
where $\lambda B:=B(x,\lambda r),$ and
$$u_B:=\fint_B u\,d\mu=\frac{1}{\mu(B)}\int_B u\,d\mu.$$ 
Throughout this paper, we let $C$ denote a constant which depends, unless otherwise noted, on $C_D,$ $C_P,$ $\lambda$, or $\Omega.$  Its precise value is not of interest here and may not be the same at each occurrence.   

As shown in \cite{HK}, when $\mu$ is doubling, the (1,1)-Poincar\'e inequality implies the following Sobolev-Poincar\'e inequality,
$$\left(\fint_B|u-u_B|^{\frac{Q}{Q-1}}d\mu\right)^{\frac{Q-1}{Q}}\le C\rad(B)\fint_{\lambda B}g_u\,d\mu,$$ where $Q>1$ is the exponent from \eqref{eq:LMBExp}. Given $u\in L^1_\loc(X),$ one can apply this inequality to the approximating functions in $N^{1,1}(X)$ in the definition of total variation to obtain the inequality
$$\left(\fint_B|u-u_B|^{\frac{Q}{Q-1}}d\mu\right)^{\frac{Q-1}{Q}}\le C\rad(B)\frac{\|Du\|(2\lambda B)}{\mu(2\lambda B)},$$ from which the following lemma is obtained in \cite{KKLS}.

\begin{lem}\label{lem:MazyaIneq}\emph{(\cite[Lemma~2.2]{KKLS})}
Let $u\in BV(X)$, and for a ball $B\subset X$, let $$A=\{x\in B:|u(x)|>0\}.$$  If $\mu(A)\le\gamma\mu(B)$ for some $0<\gamma<1,$ then 
$$\left(\fint_B|u|^{\frac{Q}{Q-1}}d\mu\right)^{\frac{Q-1}{Q}}\le\frac{C\rad(B)}{1-\gamma^{1/Q}}\frac{\|Du\|(2\lambda B)}{\mu(2\lambda B)},$$ 
where $Q>1$ is the lower mass bound exponent given in \eqref{eq:LMBExp}.  
\end{lem}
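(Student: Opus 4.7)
The plan is to deduce this directly from the Sobolev--Poincar\'e inequality for BV functions stated immediately above the lemma, by using the hypothesis $\mu(A) \le \gamma\mu(B)$ to bound the mean value $u_B$ and then absorb it into the left-hand side. Write $p = Q/(Q-1)$, so the target estimate controls $\|u\|_{L^p(B,\,d\mu/\mu(B))}$.

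First I would bound the mean. Since $u$ vanishes on $B \setminus A$, H\"older's inequality with conjugate exponents $p$ and $Q$ gives
$$|u_B| \;\le\; \frac{1}{\mu(B)}\int_A |u|\,d\mu \;\le\; \left(\fint_B |u|^p\,d\mu\right)^{1/p}\!\left(\frac{\mu(A)}{\mu(B)}\right)^{1/Q} \;\le\; \gamma^{1/Q}\left(\fint_B |u|^p\,d\mu\right)^{1/p}.$$
Next, I would apply the triangle inequality in $L^p(B,d\mu/\mu(B))$ to $u = (u-u_B) + u_B$, which yields
$$\left(\fint_B |u|^p\,d\mu\right)^{1/p} \;\le\; \left(\fint_B |u-u_B|^p\,d\mu\right)^{1/p} + |u_B|.$$
Plugging in the BV Sobolev--Poincar\'e inequality recorded above the lemma for the first term on the right, and the H\"older bound just derived for the second, gives
$$\left(\fint_B |u|^p\,d\mu\right)^{1/p} \;\le\; C\,\rad(B)\frac{\|Du\|(2\lambda B)}{\mu(2\lambda B)} + \gamma^{1/Q}\left(\fint_B |u|^p\,d\mu\right)^{1/p}.$$
Because $\gamma < 1$, we may subtract the last term and divide by $1-\gamma^{1/Q}$ to obtain the claimed estimate, with exactly the factor $C\,\rad(B)/(1-\gamma^{1/Q})$ appearing on the right.

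The one subtle point, and the main obstacle, is that the absorption step is only legitimate once we know $\left(\fint_B |u|^p\,d\mu\right)^{1/p}$ is finite. A priori, $u \in BV(X)$ gives $u \in L^1$, not $L^p$. I would handle this by the standard truncation trick: apply the argument to $u_N := \max(-N,\min(u,N))$, which still satisfies the hypothesis $\{u_N \neq 0\} \subset A$ (so the H\"older bound on $(u_N)_B$ is unchanged), has $\|Du_N\|(2\lambda B) \le \|Du\|(2\lambda B)$ by the coarea formula \eqref{eq:Coarea}, and is bounded, hence trivially in $L^p(B)$. The truncated inequality
$$\left(\fint_B |u_N|^p\,d\mu\right)^{1/p} \;\le\; \frac{C\,\rad(B)}{1-\gamma^{1/Q}}\frac{\|Du\|(2\lambda B)}{\mu(2\lambda B)}$$
holds uniformly in $N$, and letting $N \to \infty$ via the monotone convergence theorem (or Fatou's lemma) recovers the full statement for $u$.
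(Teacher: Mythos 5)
Your argument is correct, and it follows exactly the route the paper indicates: the lemma is quoted from \cite[Lemma~2.2]{KKLS} with the remark that it is ``obtained'' from the BV Sobolev--Poincar\'e inequality stated just above it, and your H\"older bound on $u_B$ followed by absorption is the standard derivation of this Maz'ya-type estimate. The truncation step is a careful (if strictly unnecessary) touch, since the Sobolev--Poincar\'e inequality itself already forces $u-u_B$, and hence $u$, to lie in $L^{Q/(Q-1)}(B)$.
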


We will use the above lemma in the proof of Lemma \ref{lem:DiscConvBound} to obtain $L^1$-bounds for a sequence of BV functions.

Given $E\subset X$, we define its \emph{codimension $1$ Hausdorff measure}, $\Ha(E)$, by 
\begin{equation}\label{eq:Ha}
\Ha(E)=\lim_{\delta\to 0^+}\inf\left\{\sum_i\frac{\mu(B_i)}{\rad(B_i)}: B_i\text{ balls in }X,\, E\subset\bigcup_i B_i,\,\rad(B_i)<\delta\right\}.
\end{equation}
We say that $\Ha\big|_{\partial\Omega}$ is \emph{lower codimension $1$ Ahlfors regular} if there exists $C>0$ such that 
\begin{equation}\label{eq:CodAhlfReg}
\Ha(B(x,r)\cap\partial\Omega)\ge C\frac{\mu(B(x,r))}{r}
\end{equation}
for every $x\in\partial\Omega$ and $0<r<2\diam(\partial\Omega).$

It was shown in \cite{A} and \cite{AMP} that if $\mu$ is doubling and $X$ supports a $(1,1)$-Poincar\'e inequality, then there is a constant $C\ge 1$ such that whenever $E\subset X$ is of finite perimeter and $A\subset X$ is a Borel set, we have
$$C^{-1}\Ha(A\cap\partial_M E)\le P(E,A)\le C\Ha(A\cap\partial_M E),$$ 
where $\partial_M E$ is the \emph{measure-theoretic boundary} of $E$, which is the set of all points $x\in X$ for which 
$$\limsup_{r\to 0^+}\frac{\mu(B(x,r)\cap E)}{\mu(B(x,r))}>0\quad\text{and}\quad\limsup_{r\to 0^+}\frac{\mu(B(x,r)\setminus E)}{\mu(B(x,r))}>0.$$  

Given an extended real-valued function $u$ on $X$, we define the approximate upper and lower limits of $u$ by 
\begin{align*}
u^\vee(x)&:=\inf\left\{t\in\R:\lim_{r\to 0^+}\frac{\mu(\{u>t\}\cap B(x,r))}{\mu(B(x,r))}=0\right\},\\
u^\wedge(x)&:=\sup\left\{t\in\R:\lim_{r\to 0^+}\frac{\mu(\{u<t\}\cap B(x,r))}{\mu(B(x,r))}=0\right\}.
\end{align*}
From the Lebesgue differentiation theorem, $u^\vee=u^\wedge$ $\mu$-a.e.\ if $u\in L^1_\loc(X).$ 

\subsection{Dirichlet problem for least gradient functions}

\begin{defn}\label{defn:Trace} Given a bounded domain $\Omega\subset X$ and a function $u\in BV(\Omega),$ we say that $u$ has a \emph{trace} at a point $x\in\partial\Omega$ if there is a number $Tu(x)\in\R$ such that 
$$\lim_{r\to 0^+}\fint_{B(x,r)\cap\Omega}|u-Tu(x)|d\mu=0.$$
\end{defn}

\begin{defn}\label{defn:LeastGradient}  Let $\Omega\subset X$ be an open set, and let $u\in BV_{\loc}(\Omega).$  We say that $u$ is of \emph{least gradient in }$\Omega$ if
$$\|Du\|(V)\le\|Dv\|(V),$$
whenever $v\in BV(\Omega)$ with $\overline{\{x\in\Omega: u(x)\ne v(x)\}}\subset V\Subset\Omega.$
\end{defn}

\begin{defn}\label{defn:WeakSolution}  Let $\Omega$ be a bounded domain in $X$ with $\mu(X\setminus\Omega)>0,$ and let $f\in BV_{\loc}(X).$  We say that $u\in BV_{\loc}(X)$ is a \emph{weak solution to the Dirichlet problem for least gradients in $\Omega$ with boundary data $f$}, or simply, \emph{weak solution to the Dirichlet problem with boundary data $f$}, if $u=f$ on $X\setminus\Omega$ and 
$$\|Du\|(\overline\Omega)\le\|Dv\|(\overline\Omega),$$
whenever $v\in BV(X)$ with $v=f$ on $X\setminus\Omega.$ 
\end{defn}

\begin{defn}\label{defn:Solution}  Let $\Omega$ be a domain in $X$ and $f:\partial\Omega\to\R.$  We say that a function $u\in BV(\Omega)$ is a \emph{solution to the Dirichlet problem for least gradients in $\Omega$ with boundary data $f$}, or simply, \emph{solution to the Dirichlet problem with boundary data $f$}, if $Tu=f$ $\Ha$-a.e.\ on $\partial\Omega$ and whenever $v\in BV(\Omega),$ with $Tv=f$ $\Ha$-a.e.\ on $\partial\Omega,$ we must have 
$$\|Du\|(\Omega)\le\|Dv\|(\Omega).$$ 
\end{defn}

Note that solutions and weak solutions to Dirichlet problems on a domain $\Omega$ are necessarily of least gradient in $\Omega.$ 

\begin{defn}\label{defn:MinimalSolution}
A (weak) solution $\chi_E$ to the Dirichlet problem with boundary data $\chi_F$ is called a \emph{minimal (weak) solution} to the said problem if every (weak) solution $\chi_{\widetilde E}$ corresponding to the data $\chi_F$ satisfies $E\sqsubset \widetilde E,$ that is, $\mu(E\setminus\widetilde E)=0$, or alternatively, $\chi_E\le\chi_{\widetilde E}$ $\mu$-a.e.\ in $X.$ 
\end{defn}

It is shown in \cite{LMSS} that if $F\subset X$ is such that $P(F,X)<\infty,$ then there is a set $E\subset X$ with $P(E,X)<\infty$ such that $\chi_E$ is a weak solution to the Dirichlet problem with boundary data $\chi_F.$  We call $E$ a \emph{weak solution set}.  Moreover, for such an $F$, there is a minimal weak solution, and such a minimal weak solution is unique $\mu$-a.e. in $X,$ \cite[Proposition~3.7]{LMSS}.  However, without additional assumptions on $\Omega,$ the trace of the weak solution may not agree with $\chi_F$ on $\partial\Omega.$  That is, a weak solution may not necessarily be a solution.  For example, if $\Omega=(0,1)\times(0,1)\subset\R^2,$ and $F$ is the disk centered at $(1/2,0)$ of radius $1/10,$ then the trace of the minimal weak solution will have zero trace on $\partial\Omega,$ and in fact there is no least gradient function with the appropriate trace on the boundary. To address this issue, the following definition was introduced in \cite{LMSS}, extending the formulation from \cite{SWZ} to the metric setting.

\begin{defn}\label{defn:MeanCurv}  Given a domain $\Omega\subset X$, we say that the boundary $\partial\Omega$ has \emph{positive mean curvature} if for each $x\in\partial\Omega,$ there exists a non-decreasing function $\phi_x:(0,\infty)\to(0,\infty)$ and a constant $r_x>0$ such that for all $0<r<r_x$ with $P(B(x,r),X)<\infty$, we have that $B(x,\phi_x(r))\sqsubset E_{B(x,r)},$ where $E_{B(x,r)}\subset X$ gives the minimal weak solution to the Dirichlet problem with boundary data $\chi_{B(x,r)},$ as defined above.
\end{defn}

In \cite{LMSS}, positive mean curvature is defined by existence of $\phi$ and $r_0>0$ so that the condition is satisfied for all $x\in\partial\Omega$ and all $0<r<r_0.$  However, the results from \cite{LMSS} hold if the definition is weakened to allow dependence on $x$, as above.

\begin{remark}\label{WeakSolnVSoln} It is shown in \cite[Proposition~4.8,~4.9]{LMSS} that if $\Ha(\partial\Omega)<\infty,$ and $F\subset X$ is open with $P(F,X)<\infty$ and $\Ha(\partial F\cap\partial\Omega)=0,$ then under the assumption of positive mean curvature, all weak solutions of $\chi_F$ are solutions.  Additionally, if $v\in BV(\Omega)$ is a solution for $\chi_F,$ then extending $v$ outside $\Omega$ by $\chi_F$ yields a weak solution.
\end{remark}

\section{Motivating examples}

The domain $\Omega= B(0,1)\subset\R^2$ has boundary of positive mean curvature as defined above, but it was shown by Spradlin and Tamasan in \cite{ST} that there exists a function $f\in L^1(\partial\Omega)$ for which there is no solution to the Dirichlet problem when $f$ is the given boundary data.  The function $f$ is the characteristic function of a certain fat Cantor set on the unit circle.  The following example, due to G\'orny \cite[Example~4.7]{G1}, is a modification of the example from \cite{ST}.

\begin{example}\label{ex:GornyKa}
Let $\Omega=B(0,1)\subset\R^2$.  We construct a Cantor set $K_{1/4}$ on the unit circle as follows.  Let $$I_0:=\left\{(\cos\theta,\sin\theta):\pi/2-1/2\le\theta\le\pi/2+1/2\right\},$$ and define $f_0:\partial\Omega\to\R$ by $f_0=\chi_{I_0}.$  To construct $f_1:\partial\Omega\to\R,$ we remove an arc of arc-length $1/4$ from the center of $I_0,$ and let 
$$I_{1,1}=\{(\cos\theta,\sin\theta):\pi/2-1/2\le\theta\le\pi/2-1/8\},$$
$$I_{1,2}=\{(\cos\theta,\sin\theta):\pi/2+1/8\le\theta\le\pi/2+1/2\},$$
and let $J_1:=I_{1,1}\cup I_{1,2}.$  Define $f_1:\partial\Omega\to\R$ by $f_1=\chi_{J_1}.$  

Continuing inductively in this manner, we construct $f_n$ from $f_{n-1}$ by removing an arc of arc-length $1/4^n$ from the center of $I_{n-1,m}$ for each $m\in\{1,\dots,2^{n-1}\},$ that is, from each arc comprising $J_{n-1}.$  We then obtain a new collection of arcs $\{I_{n,m}\}_{m=1}^{2^n},$ and by a direct computation, it follows that the arc length of each $I_{n,m}$ is given by 
$$\Ha(I_{n,m})=\frac{2^n+1}{2^{2n+1}}.$$
Setting $J_n=\bigcup_{m=1}^{2^n}I_{n,m},$ we define $f_n:\partial\Omega\to\R$ by $f_n=\chi_{J_n}$. The Cantor set is then given by $K_{1/4}=\bigcap_{n\in\N} J_n,$ and we define $f:\partial\Omega\to\R$ by $f=\chi_{K_{1/4}}.$  We note that $f\in L^1(\partial\Omega).$  

\begin{figure}[h]
\centering
\begin{minipage}[t]{0.45\textwidth}
\centering
\begin{tikzpicture}[line cap=round,line join=round,>=triangle 45,x=3.0cm,y=3.0cm]
\clip(-1.2,-1.2) rectangle (1.2,1.2);
\draw [line width=0.4pt] (0.,0.) circle (3.0046690567933103cm);
\draw [shift={(0.,0.)},line width=0.4pt,color=ffqqqq,fill=ffqqqq,fill opacity=0.3100000023841858]  plot[domain=1.9198621771937627:2.617993877991494,variable=\t]({1.*1.0015563522644368*cos(\t r)+0.*1.0015563522644368*sin(\t r)},{0.*1.0015563522644368*cos(\t r)+1.*1.0015563522644368*sin(\t r)});
\draw [shift={(0.,0.)},line width=0.4pt,color=ffqqqq,fill=ffqqqq,fill opacity=0.33000001311302185]  plot[domain=0.523598775598299:1.2217304763960306,variable=\t]({1.*1.001556352264437*cos(\t r)+0.*1.001556352264437*sin(\t r)},{0.*1.001556352264437*cos(\t r)+1.*1.001556352264437*sin(\t r)});
\draw [line width=1.2pt] (-0.34255244715021665,0.9411551135241432)-- (0.34255244715021665,0.9411551135241432);
\draw [line width=1.2pt] (-0.8673732443826784,0.5007781761322185)-- (0.8673732443826784,0.5007781761322185);
\draw [line width=1.2pt] (-0.8673732443826784,0.5007781761322185)-- (-0.34255244715021665,0.9411551135241432);
\draw [line width=1.2pt] (0.8673732443826784,0.5007781761322185)-- (0.34255244715021665,0.9411551135241432);
\begin{scriptsize}
\draw [fill=uuuuuu] (-0.8673732443826784,0.5007781761322185) circle (1.0pt);
\draw [fill=uuuuuu] (0.8673732443826784,0.5007781761322185) circle (1.0pt);
\draw [fill=uuuuuu] (-0.34255244715021665,0.9411551135241432) circle (1.0pt);
\draw [fill=uuuuuu] (0.34255244715021665,0.9411551135241432) circle (1.0pt);
\draw[color=ffqqqq] (-0.7448868047882511,0.9217262632502173) node {$I_{1,1}$};
\draw[color=ffqqqq] (0.7480655166932737,0.893557351524151) node {$I_{1,2}$};
\end{scriptsize}
\end{tikzpicture}
\caption{\small{$u_1=1$ in the shaded regions, and $u_1=0$ elsewhere in the disk.}  }\label{fig:f_1}
\end{minipage}
\hfill
\begin{minipage}[t]{0.45\textwidth}
\centering
\definecolor{qqqqff}{rgb}{0.,0.,1.}
\definecolor{ffqqqq}{rgb}{1.,0.,0.}
\begin{tikzpicture}[line cap=round,line join=round,>=triangle 45,x=3.5cm,y=3.5cm]
\clip(-1.,-0.5) rectangle (1.,1.2);
\draw [line width=0.8pt,color=ffqqqq] (-0.7676043049047092,0.3013032211772025)-- (0.7678409576922745,0.3006996236948274);
\draw [line width=0.4pt] (-0.5412470074860535,0.6221347738934001)-- (-0.3062110369032702,0.7656597161132511);
\draw [line width=0.4pt] (0.10265086657297898,0.8182070640075265)-- (0.5944352024707327,0.5715302179793987);
\draw [shift={(0.,0.)},line width=1.2pt,color=qqqqff,fill=qqqqff,fill opacity=0.36000001430511475]  plot[domain=0.7657560382254658:1.4459901093726175,variable=\t]({1.*0.8246211251235321*cos(\t r)+0.*0.8246211251235321*sin(\t r)},{0.*0.8246211251235321*cos(\t r)+1.*0.8246211251235321*sin(\t r)});
\draw [shift={(0.,0.)},line width=1.2pt,color=qqqqff,fill=qqqqff,fill opacity=0.36000001430511475]  plot[domain=1.951243198315623:2.286778067362325,variable=\t]({1.*0.8246211251235321*cos(\t r)+0.*0.8246211251235321*sin(\t r)},{0.*0.8246211251235321*cos(\t r)+1.*0.8246211251235321*sin(\t r)});
\draw [shift={(0.,0.)},line width=0.4pt]  plot[domain=0.24497866312686414:0.7657560382254658,variable=\t]({1.*0.8246211251235321*cos(\t r)+0.*0.8246211251235321*sin(\t r)},{0.*0.8246211251235321*cos(\t r)+1.*0.8246211251235321*sin(\t r)});
\draw [shift={(0.,0.)},line width=0.4pt]  plot[domain=1.4459901093726175:1.951243198315623,variable=\t]({1.*0.8246211251235323*cos(\t r)+0.*0.8246211251235323*sin(\t r)},{0.*0.8246211251235323*cos(\t r)+1.*0.8246211251235323*sin(\t r)});
\draw [shift={(0.,0.)},line width=0.4pt]  plot[domain=2.286778067362325:2.896613990462929,variable=\t]({1.*0.8246211251235321*cos(\t r)+0.*0.8246211251235321*sin(\t r)},{0.*0.8246211251235321*cos(\t r)+1.*0.8246211251235321*sin(\t r)});
\draw [shift={(0.,0.)},line width=0.4pt,dash pattern=on 1pt off 1pt,color=ffqqqq]  plot[domain=0.37325888709703364:2.7675475483630922,variable=\t]({1.*1.0023190006591483*cos(\t r)+0.*1.0023190006591483*sin(\t r)},{0.*1.0023190006591483*cos(\t r)+1.*1.0023190006591483*sin(\t r)});
\draw [line width=0.4pt,dash pattern=on 1pt off 1pt,color=ffqqqq] (0.7678409576922745,0.3006996236948274)-- (0.933303256406378,0.3654974837992789);
\draw [line width=0.4pt,dash pattern=on 1pt off 1pt,color=ffqqqq] (-0.9359384140660347,0.36737842294486134)-- (-0.7676043049047092,0.3013032211772025);
\begin{scriptsize}
\draw [fill=black] (-0.7676043049047092,0.3013032211772025) circle (1.0pt);
\draw [fill=black] (0.7678409576922745,0.3006996236948274) circle (1.0pt);
\draw [fill=black] (0.5944352024707327,0.5715302179793987) circle (1.0pt);
\draw [fill=black] (0.10265086657297898,0.8182070640075265) circle (1.0pt);
\draw [fill=black] (-0.3062110369032702,0.7656597161132511) circle (1.0pt);
\draw [fill=black] (-0.5412470074860535,0.6221347738934001) circle (1.0pt);
\draw[color=ffqqqq] (0.030553183265790172,0.508856250965606) node {$\Omega_A$};
\draw[color=ffqqqq] (-0.015242537413805698,1.1125271144693694) node {$A$};
\end{scriptsize}
\end{tikzpicture}
\caption{$w=0$ in the shaded regions, and $w=1$ elsewhere in $\Omega_A$.}\label{fig:Omega_A}
\end{minipage}
\end{figure}

For each $n\in\N,$ consider the function $u_n:\Omega\to\R$ given as follows.  For each $m\in\{1,\dots, 2^n\},$ let $u_n=1$ on the region of $\Omega$ bounded by $I_{n,m}$ and the chord joining the endpoints of $I_{n,m}.$  Let $u_n=0$ elsewhere in $\Omega$.  It was shown in \cite[Example~4.7]{G1} that $u_n$ is a solution to the Dirichlet problem with boundary data $f_n.$  This was done by considering the trapezoid formed by the chord joining the endpoints of each arc $I_{n-1,m}$, the chord joining the endpoints of the arc removed from the center of $I_{n-1,m},$ and the chords joining the endpoints of the two arcs remaining after the removal. Since the Cantor set was constructed using the removal parameter 1/4, it was shown that the sum of the lengths of the bases of the trapezoid is greater than the sum of the lengths of the sides, and such an inequality holds on every stage of the construction (see Figure \ref{fig:f_1}).  Here by bases, we mean the two parallel sides of the trapezoid.

\vskip .2cm

\noindent{\it Claim.}  If $n\in\N$ is sufficiently large, then for any set $E\subset\Omega$ such that $w=\chi_E$ is a solution to the Dirichlet problem with boundary data $f_n$, we have that $w=u_n$ a.e.\ in $\Omega.$ That is, for sufficiently large $n\in\N$, solution sets to the Dirichlet problem with boundary data $f_n$ are unique a.e.\  

\vskip .2cm

\noindent{\it Proof of claim.}  By summing the lengths of the line segments which comprise the boundary of $\{u_n=1\},$ we have by direct computation that $$\|Du_n\|(\Omega)\to\Ha(K_{1/4})=1/2$$ as $n\to\infty.$  Since $2\sin(5/16)>1/2,$ there exists $N\in\N$ such that for all $n\in\N$, $n>N$ implies that 
\begin{equation}\label{eq:u_nbound}
\|Du_n\|(\Omega)<2\sin(5/16).
\end{equation}
Fix $n>N$ and suppose that there exists $E\subset\Omega$ such that $w=\chi_E$ is a solution to the Dirichlet problem with boundary data $f_n.$  Since $w$ is of least gradient in $\Omega,$ we have that $\partial E\cap\Omega$ consists of straight line segments.  We will show that multiple arcs from $\{I_{n,m}\}_{m=1}^{2^n}$ cannot be contained in the boundary of a single connected component of $E.$  In doing so, this will show that $w=u_n$ a.e.\ in $\Omega.$  

Suppose that a connected component $E_0$ of $E$ contains multiple arcs from $\{I_{n,m}\}_{m=1}^{2^n}$ it its boundary.  Let $I_{n,m_1}$ and $I_{n,m_2}$ be the two arcs forming part of the boundary of $E_0$ farthest from one another on $\partial\Omega$, and let $A$ be the shortest arc on $\partial\Omega$ which contains both $I_{n,m_1}$ and $I_{n,m_2}.$  Since the chord joining the endpoints of $A$ forms part of the perimeter of $E,$ we have that 
$$\|Dw\|(\Omega)\ge 2\sin(\Ha(A)/2).$$
If $\Ha(A)\ge 5/8,$ then by the choice of $N$ and \eqref{eq:u_nbound}, we would have that 
$$\|Dw\|(\Omega)\ge 2\sin(5/16)>\|Du_n\|(\Omega).$$  However, this contradicts $w$ being a solution to the Dirichlet problem with boundary data $f_n$, and so it follows that $\Ha(A)<5/8.$  

Let $\Omega_A$ denote the region of $\Omega$ bounded by $A$ and the chord joining the endpoints of $A$.  Since $\Ha(A)<5/8,$ it follows from the argument in Example~\ref{ex:NonLin} that for each subarc of $A$ which was removed in the construction of $f_n$, $w=0$ on the region of $\Omega_A$ bounded by that subarc and the chord joining its endpoints.  Likewise, $w=1$ elsewhere in $\Omega_A.$  It is shown, as part of the discussion of Example~\ref{ex:NonLin}, that this configuration minimizes the perimeter of potential solution sets in such regions $\Omega_A,$ see Figure~\ref{fig:Omega_A}.

Let $C$ denote the largest subarc of $A$ removed during the construction of $f_n,$ and let $k\in\N$ such that $\Ha(C)=4^{-k}$; we note that $k\le n.$  Consider the function $h:[0,\Ha(I_{k,m})]\to\R,$ given by 
$$h(\theta)=2\left[\sin\left(\frac{\Ha(C)}{2}\right)+\sin\left(\frac{\Ha(I_{k,m})+\Ha(C)+\theta}{2}\right)-\sin\left(\frac{\Ha(I_{k,m})}{2}\right)-\sin\left(\frac{\theta}{2}\right)\right],$$ 
where $I_{k,m}$ is one of the two arcs adjacent to $C$ at the $k$-th stage of the construction.  The function $h$ measures the difference between the sum of the lengths of the bases and the sum of the lengths of the sides of the quadrilateral shown in Figure~\ref{fig:hfunction}.  Here, by bases we mean the chord joining the end points of $C$ and the side of the quadrilateral opposite that chord.  Because the trapezoid inequality between side and base lengths discussed above was shown to hold at every stage of the construction in \cite[Example~4.7]{G1}, we have that $h\left(\Ha(I_{k,m})\right)>0.$  Furthermore, we have that 
$$h'(\theta)=\cos\left(\frac{\Ha(I_{k,m})+\Ha(C)+\theta}{2}\right)-\cos\left(\frac{\theta}{2}\right)<0,$$ and so $h$ is positive on $[0,\Ha(I_{k,m})].$  Similarly, for any fixed $\lambda\in[0,\Ha(I_{k,m})],$ the function $h_\lambda:[0,\Ha(I_{k,m})]\to\R$ given by 
$$h_\lambda(\theta)=2\left[\sin\left(\frac{\Ha(C)}{2}\right)+\sin\left(\frac{\lambda+\Ha(C)+\theta}{2}\right)-\sin\left(\frac{\lambda}{2}\right)-\sin\left(\frac{\theta}{2}\right)\right]$$ is decreasing with $h_\lambda(\Ha(I_{k,m}))>0.$  Hence, $h_\lambda$ is positive on $[0,\Ha(I_{k,m})].$  

Thus the sum of the length of the the chord joining the endpoints of $C$ and the length of the chord joining the endpoints of $A$ is strictly greater than the sum of the lengths of the chords which join each endpoint of $C$ to the corresponding endpoint of $A$, see Figure~\ref{fig:sidelengths}.  This is a contradiction, since $w$ is a solution, and the chord joining the endpoints of $A$ and the chord joining the endpoints of $C$ form part of the perimeter of $A$ in $\Omega.$  Hence, multiple arcs from $\{I_{k,m}\}_{m=1}^{2^n}$ cannot be contained in the boundary of a single connected component of $E$.  Therefore, we have that $w=u_n$ a.e.\ in $\Omega,$ proving the claim.

\begin{figure}[h]
\centering
\begin{minipage}[t]{0.45\textwidth}
\centering
\definecolor{ffqqqq}{rgb}{1.,0.,0.}
\definecolor{qqqqff}{rgb}{0.,0.,1.}
\begin{tikzpicture}[line cap=round,line join=round,>=triangle 45,x=3.5cm,y=3.5cm]
\clip(-1.,0.) rectangle (1.,1.2);
\draw [shift={(0.,0.)},line width=1.2pt,color=qqqqff]  plot[domain=1.1608310274339484:1.9872279318512736,variable=\t]({1.*0.8246211251235321*cos(\t r)+0.*0.8246211251235321*sin(\t r)},{0.*0.8246211251235321*cos(\t r)+1.*0.8246211251235321*sin(\t r)});
\draw [line width=0.8pt] (-0.7676043049047092,0.3013032211772025)-- (-0.3335589168491971,0.7541474981662344);
\draw [line width=0.8pt] (0.32867542902704827,0.7562886104880105)-- (0.6649028198962915,0.4877542824967915);
\draw [shift={(0.,0.)},line width=0.4pt,color=ffqqqq]  plot[domain=1.9872279318512736:2.7675475483630922,variable=\t]({1.*0.8246211251235321*cos(\t r)+0.*0.8246211251235321*sin(\t r)},{0.*0.8246211251235321*cos(\t r)+1.*0.8246211251235321*sin(\t r)});
\draw [shift={(0.,0.)},line width=0.4pt,color=ffqqqq]  plot[domain=0.6329042065078376:1.1608310274339484,variable=\t]({1.*0.8246211251235323*cos(\t r)+0.*0.8246211251235323*sin(\t r)},{0.*0.8246211251235323*cos(\t r)+1.*0.8246211251235323*sin(\t r)});
\draw [line width=0.8pt] (-0.3335589168491971,0.7541474981662344)-- (0.32867542902704827,0.7562886104880105);
\draw [line width=0.8pt] (-0.7676043049047092,0.3013032211772025)-- (0.6649028198962915,0.4877542824967915);
\draw [shift={(0.,0.)},line width=0.4pt]  plot[domain=0.24497866312686414:0.6329042065078376,variable=\t]({1.*0.8246211251235321*cos(\t r)+0.*0.8246211251235321*sin(\t r)},{0.*0.8246211251235321*cos(\t r)+1.*0.8246211251235321*sin(\t r)});
\draw [shift={(0.,0.)},line width=0.4pt]  plot[domain=2.7675475483630922:2.896613990462929,variable=\t]({1.*0.824621125123532*cos(\t r)+0.*0.824621125123532*sin(\t r)},{0.*0.824621125123532*cos(\t r)+1.*0.824621125123532*sin(\t r)});
\begin{scriptsize}
\draw [fill=black] (-0.7676043049047092,0.3013032211772025) circle (1.0pt);
\draw [fill=black] (0.6649028198962915,0.4877542824967915) circle (1.0pt);
\draw [fill=black] (0.32867542902704827,0.7562886104880105) circle (1.0pt);
\draw [fill=black] (-0.3335589168491971,0.7541474981662344) circle (1.0pt);
\draw[color=qqqqff] (-0.026294911285031475,0.8951382817472405) node {$C$};
\draw[color=ffqqqq] (-0.7150150726739684,0.6440315360668564) node {$I_{k,m}$};
\draw[color=ffqqqq] (0.62387359205339,0.6867509409336436) node {$\theta$};
\end{scriptsize}
\end{tikzpicture}
\caption{}\label{fig:hfunction}
\end{minipage}
\hfill
\begin{minipage}[t]{0.45\textwidth}
\centering
\definecolor{ffqqqq}{rgb}{1.,0.,0.}
\definecolor{qqqqff}{rgb}{0.,0.,1.}
\begin{tikzpicture}[line cap=round,line join=round,>=triangle 45,x=3.5cm,y=3.5cm]
\clip(-1.,0.) rectangle (1.,1.4);
\draw [line width=0.4pt] (-0.5412470074860535,0.6221347738934001)-- (-0.3062110369032702,0.7656597161132511);
\draw [shift={(0.,0.)},line width=1.2pt,color=qqqqff]  plot[domain=0.9034548843869886:1.5072644527326065,variable=\t]({1.*0.8246211251235321*cos(\t r)+0.*0.8246211251235321*sin(\t r)},{0.*0.8246211251235321*cos(\t r)+1.*0.8246211251235321*sin(\t r)});
\draw [shift={(0.,0.)},line width=0.8pt,color=qqqqff,fill=qqqqff,fill opacity=0.36000001430511475]  plot[domain=1.951243198315623:2.286778067362325,variable=\t]({1.*0.8246211251235321*cos(\t r)+0.*0.8246211251235321*sin(\t r)},{0.*0.8246211251235321*cos(\t r)+1.*0.8246211251235321*sin(\t r)});
\draw [line width=0.8pt,color=ffqqqq] (-0.7676043049047092,0.3013032211772025)-- (0.05235448914947508,0.8229574760981867);
\draw [line width=0.8pt,color=ffqqqq] (0.5103579812795977,0.6477150075026932)-- (0.7678409576922745,0.3006996236948274);
\draw [shift={(0.,0.)},line width=0.4pt]  plot[domain=0.24497866312686414:0.9034548843869886,variable=\t]({1.*0.8246211251235321*cos(\t r)+0.*0.8246211251235321*sin(\t r)},{0.*0.8246211251235321*cos(\t r)+1.*0.8246211251235321*sin(\t r)});
\draw [shift={(0.,0.)},line width=0.4pt]  plot[domain=1.5072644527326065:1.951243198315623,variable=\t]({1.*0.8246211251235321*cos(\t r)+0.*0.8246211251235321*sin(\t r)},{0.*0.8246211251235321*cos(\t r)+1.*0.8246211251235321*sin(\t r)});
\draw [shift={(0.,0.)},line width=0.4pt]  plot[domain=2.286778067362325:2.896613990462929,variable=\t]({1.*0.8246211251235321*cos(\t r)+0.*0.8246211251235321*sin(\t r)},{0.*0.8246211251235321*cos(\t r)+1.*0.8246211251235321*sin(\t r)});
\draw [line width=0.4pt,dash pattern=on 1pt off 1pt,color=ffqqqq] (0.7678409576922745,0.3006996236948274)-- (0.9754758754552496,0.38201300117456227);
\draw [shift={(0.,0.)},line width=0.4pt,dash pattern=on 1pt off 1pt,color=ffqqqq]  plot[domain=0.3732588870970336:2.7675475483630922,variable=\t]({1.*1.0476101930878592*cos(\t r)+0.*1.0476101930878592*sin(\t r)},{0.*1.0476101930878592*cos(\t r)+1.*1.0476101930878592*sin(\t r)});
\draw [line width=0.4pt,dash pattern=on 1pt off 1pt,color=ffqqqq] (-0.7676043049047092,0.3013032211772025)-- (-0.9770016837316641,0.38349674763802233);
\begin{scriptsize}
\draw [fill=black] (-0.7676043049047092,0.3013032211772025) circle (1.0pt);
\draw [fill=black] (0.7678409576922745,0.3006996236948274) circle (1.0pt);
\draw [fill=black] (0.5103579812795977,0.6477150075026932) circle (1.0pt);
\draw [fill=black] (0.05235448914947508,0.8229574760981867) circle (1.0pt);
\draw [fill=black] (-0.3062110369032702,0.7656597161132511) circle (1.0pt);
\draw [fill=black] (-0.5412470074860535,0.6221347738934001) circle (1.0pt);
\draw[color=qqqqff] (0.34966357721792113,0.8382604704437056) node {$C$};
\draw[color=ffqqqq] (1.77580785429704E-4,1.1852501383302496) node {$A$};
\end{scriptsize}
\end{tikzpicture}
\caption{}\label{fig:sidelengths}
\end{minipage}
\end{figure}

Now suppose that there exists a solution $u\in BV(\Omega)$ to the Dirichlet problem with boundary data $f=\chi_{K_{1/4}}$.  By Remark~\ref{remark:SupLevSoln} below, $\chi_{\{u>t\}}$ is a solution to the Dirichlet problem with boundary data $\chi_{\{f>t\}}$ for $\Leb$-a.e.\ $t\in\R.$  Thus we may assume that there exists some $E\subset\Omega$ such that $u=\chi_E.$  Since for all $n\in\N$, $f\le f_n$ on $\partial\Omega,$ it follows from Lemma~\ref{lem:MinMaxSoln} below that $\max\{u,u_n\}$ is a solution to the Dirichlet problem with boundary data $f_n.$  But $\max\{u,u_n\}$ is the characteristic function of a subset of $\Omega,$ and so by the claim above, $\max\{u,u_n\}=u_n$ a.e.\ in $\Omega$ for sufficiently large $n$.  Hence for sufficiently large $n$, $u\le u_n$ a.e. in $\Omega$, and since $u_n\to 0$ as $n\to\infty,$ it follows that $u=0$ a.e.\ in $\Omega.$  However $\Ha(K_{1/4})=1/2$, and for each $x\in K_{1/4},$ $Tu(x)=0\ne f(x)$, a contradiction.  Therefore there is no solution to the Dirichlet problem with boundary data $f$. 
\end{example}

In the next example, we show that a slight modification of the function $f$ constructed above, namely the addition of another arc to $K_{1/4},$ renders the new function solvable.  

\begin{example}\label{ex:NonLin}
Let $F:=\partial\Omega\setminus I_0.$ For each $n\in\N,$ let $g_n:=f_n+\chi_{F}=\chi_{J_n}+\chi_F,$ and $g:=f+\chi_F=\chi_{K_{1/4}}+\chi_F.$  We claim that there is a solution $v\in BV(\Omega)$ to the Dirichlet problem with boundary data $g$, i.e. $Tv=g$ $\Ha$-a.e.\ on $\partial\Omega.$ 

To show this, we first note that for each $n\in\N$, $g_n$ has a solution, see Theorem~\ref{thm:SeqApprox} proved in Section~4.  Furthermore, $g$ can be extended to a BV function in $\Omega$ by Proposition~\ref{prop:ExtBounds} below, which can then be extended to a BV function on $\R^2.$  Thus by \cite[Lemma~3.1]{LMSS}, this extension of $g$ has a weak solution.  For each $n\in\N,$ we will construct a solution $v_n$ for the Dirichlet problem with boundary data $g_n$, and show that these solutions converge in $L^1(\Omega)$ to a function $v$ whose trace agrees with $g$ $\Ha$-a.e. on $\partial\Omega.$ 

We first denumerate the removed arcs in the construction as follows.  Let $C_{1,1}$ denote the arc removed from $I_0$ in the construction of $f_1$, and similarly, let $C_{2,1}$ and $C_{2,2}$ denote the arcs removed from $I_{1,1}$ and $I_{1,2}$ respectively, in the construction of $f_2.$  Inductively, let $\{C_{n,m}\}_{m=1}^{2^{n-1}}$ be the collection of arcs removed from the arcs $\{I_{n-1,m}\}_{m=1}^{2^{n-1}}$ in the construction of $f_n.$  We recall that each $C_{n,m}$ has arc length $1/4^n,$ and we note that 
$$J_n=I_0\setminus\left(\bigcup_{\ell=1}^n\bigcup_{m=1}^{2^{\ell-1}}C_{\ell,m}\right).$$

For each $n\in\N,$ consider the function $v_n:\Omega\to\R$ given as follows.  For each $\ell\in\{1,\dots,n\}$ and $m\in\{1,\dots, 2^{\ell-1}\},$ let $v_n=0$ in the region of $\Omega$ bounded by $C_{\ell,m}$ and the chord joining the endpoints of $C_{\ell,m},$ and let $v_n=1$ elsewhere in $\Omega$ (see Figure \ref{fig:g_2}).  We wish to show that $v_n$ is a solution to the Dirichlet problem with boundary data $g_n$.

\begin{figure}[h]
\centering
\begin{minipage}[t]{0.45\textwidth}
\centering
\definecolor{ffqqqq}{rgb}{1.,0.,0.}
\definecolor{qqqqff}{rgb}{0.,0.,1.}
\definecolor{uuuuuu}{rgb}{0.26666666666666666,0.26666666666666666,0.26666666666666666}
\begin{tikzpicture}[line cap=round,line join=round,>=triangle 45,x=3.0cm,y=3.0cm]
\clip(-1.2,-1.2) rectangle (1.2,1.2);
\draw [shift={(0.,0.)},line width=0.4pt,color=qqqqff,fill=qqqqff,fill opacity=0.3400000035762787]  plot[domain=1.2217304763960306:1.9198621771937627,variable=\t]({1.*1.0015563522644368*cos(\t r)+0.*1.0015563522644368*sin(\t r)},{0.*1.0015563522644368*cos(\t r)+1.*1.0015563522644368*sin(\t r)});
\draw [shift={(0.,0.)},line width=0.4pt,color=ffqqqq]  plot[domain=-3.6651914291880923:0.523598775598299,variable=\t]({1.*1.001556352264437*cos(\t r)+0.*1.001556352264437*sin(\t r)},{0.*1.001556352264437*cos(\t r)+1.*1.001556352264437*sin(\t r)});
\draw [shift={(0.,0.)},line width=0.4pt,color=qqqqff,fill=qqqqff,fill opacity=0.5299999713897705]  plot[domain=2.181661564992912:2.356194490192345,variable=\t]({1.*1.001556352264437*cos(\t r)+0.*1.001556352264437*sin(\t r)},{0.*1.001556352264437*cos(\t r)+1.*1.001556352264437*sin(\t r)});
\draw [shift={(0.,0.)},line width=0.4pt,color=qqqqff,fill=qqqqff,fill opacity=0.5199999809265137]  plot[domain=0.7853981633974484:0.9599310885968814,variable=\t]({1.*1.0015563522644368*cos(\t r)+0.*1.0015563522644368*sin(\t r)},{0.*1.0015563522644368*cos(\t r)+1.*1.0015563522644368*sin(\t r)});
\draw [shift={(0.,0.)},line width=0.4pt,color=ffqqqq]  plot[domain=2.356194490192345:2.617993877991494,variable=\t]({1.*1.0015563522644368*cos(\t r)+0.*1.0015563522644368*sin(\t r)},{0.*1.0015563522644368*cos(\t r)+1.*1.0015563522644368*sin(\t r)});
\draw [shift={(0.,0.)},line width=0.4pt,color=ffqqqq]  plot[domain=1.9198621771937627:2.181661564992912,variable=\t]({1.*1.0015563522644368*cos(\t r)+0.*1.0015563522644368*sin(\t r)},{0.*1.0015563522644368*cos(\t r)+1.*1.0015563522644368*sin(\t r)});
\draw [shift={(0.,0.)},line width=0.4pt,color=ffqqqq]  plot[domain=0.9599310885968814:1.2217304763960306,variable=\t]({1.*1.001556352264437*cos(\t r)+0.*1.001556352264437*sin(\t r)},{0.*1.001556352264437*cos(\t r)+1.*1.001556352264437*sin(\t r)});
\draw [shift={(0.,0.)},line width=0.4pt,color=ffqqqq]  plot[domain=0.523598775598299:0.7853981633974484,variable=\t]({1.*1.001556352264437*cos(\t r)+0.*1.001556352264437*sin(\t r)},{0.*1.001556352264437*cos(\t r)+1.*1.001556352264437*sin(\t r)});
\draw [line width=1.2pt] (-0.34255244715021665,0.9411551135241432)-- (0.34255244715021665,0.9411551135241432);
\draw [line width=0.4pt] (-0.7082072884266458,0.7082072884266459)-- (-0.5744691233365886,0.8204269334280392);
\draw [line width=0.4pt] (0.5744691233365886,0.8204269334280392)-- (0.7082072884266458,0.7082072884266459);
\begin{scriptsize}
\draw [fill=uuuuuu] (-0.8673732443826784,0.5007781761322185) circle (1.0pt);
\draw [fill=uuuuuu] (0.8673732443826784,0.5007781761322185) circle (1.0pt);
\draw [fill=uuuuuu] (-0.34255244715021665,0.9411551135241432) circle (1.0pt);
\draw [fill=uuuuuu] (0.34255244715021665,0.9411551135241432) circle (1.0pt);
\draw[color=qqqqff] (-0.019172074066183868,1.1033125087737237) node {$C_{1,1}$};
\draw[color=ffqqqq] (0.024746724218267943,-1.1060719355359305) node {$F$};
\draw [fill=uuuuuu] (-0.7082072884266458,0.7082072884266459) circle (1.0pt);
\draw [fill=uuuuuu] (-0.5744691233365886,0.8204269334280392) circle (1.0pt);
\draw [fill=uuuuuu] (0.5744691233365886,0.8204269334280392) circle (1.0pt);
\draw [fill=uuuuuu] (0.7082072884266458,0.7082072884266459) circle (1.0pt);
\draw[color=qqqqff] (-0.7487619067915671,0.8523479471482863) node {$C_{2,1}$};
\draw[color=qqqqff] (0.7014547386011478,0.8648961752295582) node {$C_{2,2}$};
\draw[color=ffqqqq] (-0.8939628317319995,0.6802579620337007) node {$I_{2,1}$};
\draw[color=ffqqqq] (-0.5390272374331645,0.997548872088718) node {$I_{2,2}$};
\draw[color=ffqqqq] (0.502475693312407,0.9814154359842255) node {$I_{2,3}$};
\draw[color=ffqqqq] (0.8717521197041243,0.6677097339524288) node {$I_{2,4}$};
\end{scriptsize}
\end{tikzpicture}
\caption{\small{$v_2=0$ on the shaded region bounding $C_{1,1}$ and on the very small shaded regions bounding $C_{2,1}$ and $C_{2,2},$ and $v_2=1$ elsewhere in the disk.}}\label{fig:g_2}
\end{minipage}
\hfill
\begin{minipage}[t]{0.45\textwidth}
\centering
\definecolor{ffqqqq}{rgb}{1.,0.,0.}
\definecolor{ffffff}{rgb}{1.,1.,1.}
\definecolor{qqqqff}{rgb}{0.,0.,1.}
\begin{tikzpicture}[line cap=round,line join=round,>=triangle 45,x=3.25cm,y=3.25cm]
\clip(-1.1,-0.25) rectangle (1.1,1.5);
\draw [shift={(0.,0.)},line width=0.4pt,color=qqqqff]  plot[domain=1.3365534840106168:1.4611654229983333,variable=\t]({1.*1.*cos(\t r)+0.*1.*sin(\t r)},{0.*1.*cos(\t r)+1.*1.*sin(\t r)});
\draw [shift={(0.,0.)},line width=0.4pt,color=qqqqff]  plot[domain=0.6435011087932843:0.8650756304846283,variable=\t]({1.*1.*cos(\t r)+0.*1.*sin(\t r)},{0.*1.*cos(\t r)+1.*1.*sin(\t r)});
\draw [line width=0.4pt] (-0.8,0.6)-- (0.8,0.6);
\draw [line width=0.4pt] (-0.3921174043245255,0.9199151815389267)-- (0.10925330977028336,0.9925600544486234);
\draw [line width=0.4pt] (0.23210657258891085,0.9726903612974833)-- (0.648582551827898,0.7611443184208971);
\draw [shift={(0.,0.)},line width=0.4pt,color=qqqqff,fill=qqqqff,fill opacity=0.1599999964237213]  plot[domain=0.6435011087932843:2.498091544796509,variable=\t]({1.*1.*cos(\t r)+0.*1.*sin(\t r)},{0.*1.*cos(\t r)+1.*1.*sin(\t r)});
\draw [shift={(0.,0.)},line width=0.4pt,color=ffffff,fill=ffffff,fill opacity=1.0]  plot[domain=0.8650756304846283:1.3365534840106168,variable=\t]({1.*1.*cos(\t r)+0.*1.*sin(\t r)},{0.*1.*cos(\t r)+1.*1.*sin(\t r)});
\draw [shift={(0.,0.)},line width=0.4pt,color=ffffff,fill=ffffff,fill opacity=1.0]  plot[domain=1.4611654229983333:1.9737285321301896,variable=\t]({1.*0.9985548294324231*cos(\t r)+0.*0.9985548294324231*sin(\t r)},{0.*0.9985548294324231*cos(\t r)+1.*0.9985548294324231*sin(\t r)});
\draw [shift={(0.,0.)},line width=0.4pt,color=qqqqff]  plot[domain=1.977354424740988:2.498091544796509,variable=\t]({1.*1.*cos(\t r)+0.*1.*sin(\t r)},{0.*1.*cos(\t r)+1.*1.*sin(\t r)});
\draw [color=qqqqff](-0.110921662800498,0.850709251770478) node[anchor=north west] {$E_0$};
\draw [shift={(0.,0.)},line width=0.4pt]  plot[domain=0.4560418449600677:0.6435011087932843,variable=\t]({1.*1.*cos(\t r)+0.*1.*sin(\t r)},{0.*1.*cos(\t r)+1.*1.*sin(\t r)});
\draw [shift={(0.,0.)},line width=0.4pt]  plot[domain=2.498091544796509:2.6937184671489747,variable=\t]({1.*1.*cos(\t r)+0.*1.*sin(\t r)},{0.*1.*cos(\t r)+1.*1.*sin(\t r)});
\draw [shift={(0.,0.)},line width=0.4pt]  plot[domain=1.4611654229983333:1.977354424740988,variable=\t]({1.*0.9985548294324231*cos(\t r)+0.*0.9985548294324231*sin(\t r)},{0.*0.9985548294324231*cos(\t r)+1.*0.9985548294324231*sin(\t r)});
\draw [shift={(0.,0.)},line width=0.4pt]  plot[domain=0.8650756304846283:1.3365534840106168,variable=\t]({1.*1.*cos(\t r)+0.*1.*sin(\t r)},{0.*1.*cos(\t r)+1.*1.*sin(\t r)});
\draw [line width=0.4pt,dash pattern=on 1pt off 1pt,color=ffqqqq] (0.8,0.6)-- (1.0268602995465377,0.770145224659903);
\draw [shift={(0.,0.)},line width=0.4pt,dash pattern=on 1pt off 1pt,color=ffqqqq]  plot[domain=0.6435011087932843:2.498091544796509,variable=\t]({1.*1.283575374433172*cos(\t r)+0.*1.283575374433172*sin(\t r)},{0.*1.283575374433172*cos(\t r)+1.*1.283575374433172*sin(\t r)});
\draw [line width=0.4pt,dash pattern=on 1pt off 1pt,color=ffqqqq] (-1.0272468560622865,0.7704351420467147)-- (-0.8,0.6);
\begin{scriptsize}
\draw [fill=black] (-0.8,0.6) circle (1.0pt);
\draw [fill=black] (0.8,0.6) circle (1.0pt);
\draw [fill=black] (0.23210657258891085,0.9726903612974833) circle (1.0pt);
\draw [fill=black] (0.10925330977028336,0.9925600544486234) circle (1.0pt);
\draw [fill=black] (0.648582551827898,0.7611443184208971) circle (1.0pt);
\draw[color=qqqqff] (0.8281063736517785,0.784022039968685) node {$C_{l_2,m_2}$};
\draw [fill=black] (-0.3921174043245255,0.9199151815389267) circle (1.0pt);
\draw[color=qqqqff] (-0.7129089260923606,0.8597210371490985) node {$C_{l_1,m_1}$};
\draw[color=ffqqqq] (-0.013594380711394488,1.385108124722683) node {$A$};
\end{scriptsize}
\end{tikzpicture}
\caption{\small{An example of the case when $E_0$ has multiple arcs from $\Cc_n$ in its boundary, and $C_{\ell_1,m_1}$ is the largest sub-arc of $A$ from $\Cc_n$.}}\label{fig:case1}
\end{minipage}
\end{figure}

Let $w\in BV(\Omega)$ be a solution to the Dirichlet problem with boundary data $g_n$, guaranteed to exist by Theorem~\ref{thm:SeqApprox}.  We note that $g_n$ is the characteristic function of a subset of the boundary of $\Omega$, and so by Remark~\ref{remark:SupLevSoln}, we may assume that there exists a set $E\subset\Omega$ such that $w=\chi_E.$  Furthermore we may assume that $\partial E\cap\Omega$ consists of straight line segments.  We will show that multiple arcs from 
$$\Cc_n:=\{C_{\ell,m}:1\le\ell\le n, 1\le m\le 2^{\ell-1}\}$$
cannot be contained in the boundary of a single connected component of $\{w=0\}.$  In doing so, this will show that $w=v_n.$

Suppose that a connected component $E_0$ of $\{w=0\}$ contains multiple arcs from $\Cc_n$ in its boundary.  Let $C_{\ell_1,m_1}$ and $C_{\ell_2,m_2}$ be the extreme arcs joined by $E_0,$ that is, the two arcs connected to $E_0$ which are farthest from one another on $\partial\Omega.$  Let $A$ be the shortest arc on $\partial\Omega$ which contains both $C_{\ell_1,m_1}$ and $C_{\ell_2,m_2}.$   Since the arc $F$ was added to $J_n$ in the construction of $g_n,$ it follows that $w=1$ on the region of $\Omega$ bounded by $F$ and the chord joining the endpoints of $F.$  Therefore, by this choice of $A$, the chord joining the endpoints of $A$ forms part of the perimeter of $E.$  Thus, 
$$\|Dw\|(\Omega)=P(E,\Omega)\ge 2\sin(\Ha(A)/2).$$  
We also note that 
$$\|Dv_n\|(\Omega)<\Ha(I_0\setminus K_{1/4})=1/2.$$  
If $\Ha(A)\ge 5/8$, then 
$$\|Dw\|(\Omega)\ge 2\sin(\Ha(A)/2)\ge 2\sin(5/16)>1/2>\|Dv_n\|(\Omega),$$ contradicting the fact that $w$ is a solution.  Thus, $\Ha(A)<5/8,$ and we have that 
$$\Ha(A)-2\sin(\Ha(A)/2)<0.011.$$

Here is another point at which the addition of $F=\partial\Omega\setminus I_0$ to $K_{1/4}$ makes a difference.  In the previous example, the invalid trace was caused by ``cutting off'' the arcs $I_{k,m}$ in constructing the solution to $f_n.$  When constructing the solution for $g_n$, however, we are unable to ``cut off'' the arc $F$; otherwise, we would have to include the line segment joining the end points of the arc $F$ in the perimeter measure of that solution, creating too much perimeter.

For $k\in\N$ and $m\in\{1,\dots,2^k\},$ we have that 
$$\Ha(I_{k,m}\cap K_{1/4})=\Ha(K_{1/4})/2^k=1/2^{k+1},$$
 and 
$1/2^7<0.011<1/2^6.$  Thus, if $A$ contained a sub-arc $I_{k,m}$ with $k\le 5,$ then we have that 
$$\Ha(A)-2\sin(\Ha(A)/2)<0.011<\Ha(I_{k,m}\cap K_{1/4})\le\Ha(A\cap K_{1/4}),$$ 
hence $\Ha(A\setminus K_{1/4})<2\sin(\Ha(A)/2).$  However, setting $\Omega_A$ to be the open region of $\Omega$ bounded by $A$ and the chord joining the endpoints of $A$, we note that 
$$\|Dv_n\|(\overline\Omega_A\cap\Omega)<\Ha(A\setminus K_{1/4}).$$
  Since the chord joining the endpoints of $A$ has length $2\sin(\Ha(A)/2)$ and comprises part of the perimeter of $E,$ this contradicts the assumption that $w$ is a solution.  Therefore, $A$ cannot contain a sub-arc $I_{k,m}$ with $k\le 5.$  We now consider two cases.  

{\bf Case 1:} Suppose that either $C_{\ell_1,m_1}$ or $C_{\ell_2,m_2}$ is the largest arc in $\Cc_n$ which is a sub-arc of $A$ (see Figure~\ref{fig:case1}). We note that the largest such sub-arc is unique.  Indeed, by the construction, if the arc $A$ contains two arcs $C_{k,m_1}$ and $C_{k,m_2},$ then there exists a $k'< k$ and $1\le m\le 2^{k'}$ such that $C_{k'}\subset A.$  That is to say, $A$ would necessarily contain a larger removed sub-arc. Without loss of generality, we assume that $C_1:=C_{\ell_1,m_1}$ is the largest such sub-arc of $A$.  

Let $B:=A\setminus C_1,$ and let $C_{k,m}$ be the largest arc from $\Cc_n$ such that $C_{k,m}\subset B.$  As above, $C_{k,m}$ is the unique such sub-arc.  We recall that $C_{k,m}$ was removed from the center of the arc $I_{k-1,m}$, and since $C_{k,m}$ was chosen as the largest removed sub-arc of $B,$ it follows from the construction that $B\subset I_{k-1,m}.$  Indeed, if $I_{k-1,m}$ did not contain $B$, then $B$ would contain one of the removed arcs bordering $I_{k-1,m}$, which would necessarily be strictly larger than $C_{k,m}$. It follows from direct computation that 
$$\Ha(B\setminus K_{1/4})\le\Ha(I_{k-1,m}\setminus K_{1/4})=(1+\sum_{j=1}^\infty 1/2^j)/4^k=2/4^k.$$

Moreover, since $C_1$ and $C_{k,m}$ are the largest sub-arcs of $A$ from $\Cc_n$, it follows from the construction that the sub-arc of $B$ connecting $C_1$ to $C_{k,m}$ is of the form $I_{k,m}$ for some $1\le m\le 2^k.$  This is because the arc joining any removed arc $C_{k,m}$ to a larger removed arc, always contains an arc of the form $I_{k,m}$.  Thus, 
$$\Ha(B)\ge\Ha(I_{k,m})=\frac{2^k+1}{2^{2k+1}}>\frac{1}{2^{k+1}}.$$ 
Since $I_{k,m}\subset B\subset A,$ it follows from the prior argument in this proof that $k\ge 6.$ 

We note that $\|Dv_n\|(\overline\Omega_A\cap\Omega)<2\sin(\Ha(C_1)/2)+\Ha(B\setminus K_{1/4}),$ and we would like to show that $$2\sin(\Ha(C_1)/2)+\Ha(B\setminus K_{1/4})<2\sin\left(\frac{\Ha(C_1)+\Ha(B)}{2}\right)=2\sin(\Ha(A)/2).$$
Using the computations above, we prove the following stronger inequality.
\vskip .2cm
\noindent{\it Claim.} 
\begin{equation}\label{eq:SinMeanVal}
\sin\left(\frac{\Ha(C_1)}{2}+\frac{1}{2^{k+2}}\right)-\sin\left(\frac{\Ha(C_1)}{2}\right)>\frac{1}{4^k}.
\end{equation}
\\
\noindent{\it Proof of claim.}  Since $C_1=C_{\ell_1,m_1},$ it follows that $\Ha(C_1)/2\le 1/8.$  By the mean value theorem, there exists $\Ha(C_1)/2<~z_k<\Ha(C_1)/2+1/2^{k+2}$ such that 
$$\sin\left(\frac{\Ha(C_1)}{2}+\frac{1}{2^{k+2}}\right)-\sin\left(\frac{\Ha(C_1)}{2}\right)=\frac{\cos(z_k)}{2^{k+2}}\ge\cos\left(\frac{\Ha(C_1)}{2}+\frac{1}{2^{k+2}}\right)/2^{k+2}.$$ 
Since $\Ha(C_1)/2\le 1/8$ and $k\ge 6,$ it then follows that 
$$\sin\left(\frac{\Ha(C_1)}{2}+\frac{1}{2^{k+2}}\right)-\sin\left(\frac{\Ha(C_1)}{2}\right)\ge 1/2^{k+3}>1/4^k,$$ proving the claim.

\vskip .2cm

Thus, it follows that $\|Dv_n\|(\overline\Omega_A\cap\Omega)<2\sin(\Ha(A)/2)$.  As above, since the chord joining the endpoints of $A$ has length $2\sin(\Ha(A)/2)$ and comprises part of the perimeter of $E,$ this contradicts the assumption that $w$ is a solution.  Therefore,  this configuration cannot occur for $w.$  

\begin{figure}
\centering
\definecolor{ffqqqq}{rgb}{1.,0.,0.}
\definecolor{sqsqsq}{rgb}{0.12549019607843137,0.12549019607843137,0.12549019607843137}
\definecolor{ffffff}{rgb}{1.,1.,1.}
\definecolor{qqqqff}{rgb}{0.,0.,1.}
\begin{tikzpicture}[line cap=round,line join=round,>=triangle 45,x=4.0cm,y=4.0cm]
\clip(-1.,0.4) rectangle (1.,1.4);
\draw [shift={(0.,0.)},line width=0.4pt,color=qqqqff]  plot[domain=1.3365534840106168:1.791500528033564,variable=\t]({1.*1.*cos(\t r)+0.*1.*sin(\t r)},{0.*1.*cos(\t r)+1.*1.*sin(\t r)});
\draw [line width=0.4pt] (-0.8,0.6)-- (0.8,0.6);
\draw [line width=0.4pt] (-0.7068478327999853,0.7073656347787642)-- (-0.21913886929044368,0.9767333373389248);
\draw [line width=0.4pt] (0.23210657258891085,0.9726903612974833)-- (0.7060063818484094,0.7082054707422967);
\draw [shift={(0.,0.)},line width=0.4pt,color=qqqqff,fill=qqqqff,fill opacity=0.1599999964237213]  plot[domain=0.6435011087932843:2.498091544796509,variable=\t]({1.*1.*cos(\t r)+0.*1.*sin(\t r)},{0.*1.*cos(\t r)+1.*1.*sin(\t r)});
\draw [shift={(0.,0.)},line width=0.4pt,color=ffffff,fill=ffffff,fill opacity=1.0]  plot[domain=0.786953154693409:1.3365534840106168,variable=\t]({1.*1.*cos(\t r)+0.*1.*sin(\t r)},{0.*1.*cos(\t r)+1.*1.*sin(\t r)});
\draw [shift={(0.,0.)},line width=0.4pt,color=ffffff,fill=ffffff,fill opacity=1.0]  plot[domain=1.791500528033564:2.3558283488936578,variable=\t]({1.*1.0010144136340535*cos(\t r)+0.*1.0010144136340535*sin(\t r)},{0.*1.0010144136340535*cos(\t r)+1.*1.0010144136340535*sin(\t r)});
\draw [color=qqqqff](-0.08318855167818821,0.8487756647531384) node[anchor=north west] {$E_0$};
\draw [shift={(0.,0.)},line width=0.4pt,color=qqqqff]  plot[domain=1.3365534840106168:1.7942950302170537,variable=\t]({1.*1.*cos(\t r)+0.*1.*sin(\t r)},{0.*1.*cos(\t r)+1.*1.*sin(\t r)});
\draw [shift={(0.,0.)},line width=1.2pt,color=ffqqqq]  plot[domain=1.7942950302170537:2.49662380612191,variable=\t]({1.*1.*cos(\t r)+0.*1.*sin(\t r)},{0.*1.*cos(\t r)+1.*1.*sin(\t r)});
\draw [shift={(0.,0.)},line width=1.2pt,color=ffqqqq]  plot[domain=0.6440880536885113:1.3330657202319052,variable=\t]({1.*1.*cos(\t r)+0.*1.*sin(\t r)},{0.*1.*cos(\t r)+1.*1.*sin(\t r)});
\draw [shift={(0.,0.)},line width=0.4pt]  plot[domain=2.49662380612191:2.689516644503601,variable=\t]({1.*1.*cos(\t r)+0.*1.*sin(\t r)},{0.*1.*cos(\t r)+1.*1.*sin(\t r)});
\draw [shift={(0.,0.)},line width=0.4pt]  plot[domain=0.45347033384214:0.6440880536885113,variable=\t]({1.*1.*cos(\t r)+0.*1.*sin(\t r)},{0.*1.*cos(\t r)+1.*1.*sin(\t r)});
\draw [line width=0.4pt,dash pattern=on 1pt off 1pt,color=ffqqqq] (0.7996476952813643,0.6004694525379308)-- (0.9879233945122827,0.7418489709314736);
\draw [shift={(0.,0.)},line width=0.4pt,dash pattern=on 1pt off 1pt,color=ffqqqq]  plot[domain=0.6440880536885113:2.498091544796509,variable=\t]({1.*1.2354483109773382*cos(\t r)+0.*1.2354483109773382*sin(\t r)},{0.*1.2354483109773382*cos(\t r)+1.*1.2354483109773382*sin(\t r)});
\draw [line width=0.4pt,dash pattern=on 1pt off 1pt,color=ffqqqq] (-0.9883586487818706,0.741268986586403)-- (-0.8,0.6);
\begin{scriptsize}
\draw [fill=black] (-0.8,0.6) circle (1.0pt);
\draw [fill=black] (0.8,0.6) circle (1.0pt);
\draw [fill=black] (0.23210657258891085,0.9726903612974833) circle (1.0pt);
\draw [fill=black] (-0.21913886929044368,0.9767333373389248) circle (1.0pt);
\draw [fill=black] (0.7996476952813643,0.6004694525379308) circle (1.0pt);
\draw [fill=sqsqsq] (-0.2216426561957363,0.9751279572212554) circle (0.5pt);
\draw[color=qqqqff] (-0.008120621363848973,1.0711991619808088) node {$C_3$};
\draw[color=ffqqqq] (-0.63090641360133,0.8932603641986725) node {$B_1$};
\draw [fill=black] (0.23549766819258636,0.9718749139039727) circle (0.5pt);
\draw[color=ffqqqq] (0.644321637170655,0.8765786019065972) node {$B_2$};
\draw[color=ffqqqq] (-0.016461502509886664,1.3279129483644119) node {$A$};
\end{scriptsize}
\end{tikzpicture}
\caption{\small{An example of the case when $E_0$ has multiple arcs from $\Cc_n$ in its boundary, but neither $C_{\ell_1,m_1}$ nor $C_{\ell_2,m_2}$  is the largest sub-arc of $A$ from $\Cc_n$.}}\label{fig:case2}
\end{figure}
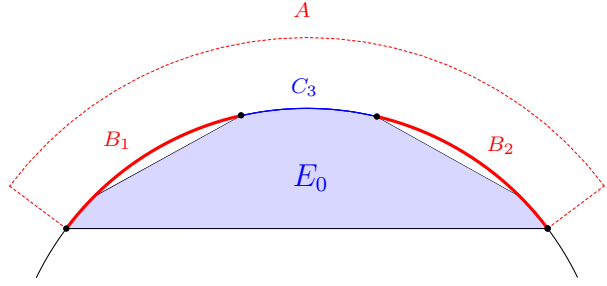

{\bf Case 2:}  Now consider the case when neither $C_{\ell_1,m_1}$ nor $C_{\ell_2,m_2}$ is the largest arc in $\Cc_n$ which is a sub-arc of $A$.  Let $C_3$ denote the largest such sub-arc, and let $B_1$ and $B_2$ be the disjoint arcs which comprise $A\setminus C_3$ (see Figure~\ref{fig:case2}). We iterate the argument from the first case as follows.  First, setting $C_1:=B_1\cup C_3,$ we see that \eqref{eq:SinMeanVal} holds by the same proof.  Since $\Ha(A)<5/8,$ it follows that $\Ha(C_1)/2<3/8$ here, and so for $k\ge 6,$ the same inequalities hold.  Thus, 
$$2\sin(\Ha(A)/2)\ge 2\sin\left(\frac{\Ha(B_1)+\Ha(C_3)}{2}\right)+\Ha(B_2\setminus K_{1/4}).$$  Now, we repeat the argument for \eqref{eq:SinMeanVal}, this time setting $C_1:=C_3.$  Once again, the claim holds, and so it follows that 
$$2\sin\left(\frac{\Ha(B_1)+\Ha(C_3)}{2}\right)\ge 2\sin(\Ha(C_3)/2)+\Ha(B_1\setminus K_{1/4}).$$
Thus, we have that 
$$2\sin(\Ha(A)/2)\ge 2\sin(\Ha(C_3)/2)+\Ha(B_1\setminus K_{1/4})+\Ha(B_2\setminus K_{1/4})>\|Dv_n\|(\overline\Omega_A\cap\Omega),$$
again contradicting the fact that $w$ is a solution.  Having exhausted all the possible cases, it follows that the connected component $E_0$ cannot contain multiple arcs from $\Cc_n$ in its boundary.  Therefore $w=v_n,$ and so $v_n$ solves the Dirichlet problem for boundary data $g_n$.  

Let $v:\Omega\to\R$ be given as follows.  Let $\Cc:=\bigcup_{n\in\N}\Cc_n.$  For each $C\in\Cc,$ let $v=0$ on the region of $\Omega$ bounded by $C$ and the chord joining the endpoints of $C.$  Let $v=1$ elsewhere in $\Omega.$  Then, $v_n\to v$ in $L^1(\Omega),$ and so by \cite[Proposition~3.1]{HKLS}, it follows that $v$ is a function of least gradient in $\Omega.$  If $x\in\partial\Omega\setminus(K_{1/4}\cup F),$ then there exists $C\in\Cc$ such that $x\in C.$  Since each $C$ is an open arc on $\partial\Omega$, it follows that $v=0$ on $B(x,r)\cap\Omega$ for small enough $r>0.$  Thus, $Tv(x)=0=g(x).$  Similarly, if $x\in F,$ and $x$ is not one of the endpoints of $F$, then $Tv(x)=1=g(x).$ 

For $x\in K_{1/4},$ there exists a fixed angle $\theta$ such that $v=1$ on the circular sector of angle $\theta$ in $B(x,r)\cap\Omega$ for sufficiently small $r>0.$  Thus, $Tv(x)>0.$  Since $v$ is the characteristic function of a set in $\Omega,$ $Tv$ must take values of either $1$ or $0$ $\Ha$-a.e., see \cite[Example~4.8]{G1}.  Therefore $Tv(x)=1=g(x)$ for $\Ha$-a.e.\ $x\in K_{1/4},$ and so $Tv=g$ $\Ha$-a.e.\ on $\partial\Omega.$  Hence by Lemma \ref{lem:TraceLeastGradSoln}, $v$ is a solution to the Dirichlet problem with boundary data $g$. 

\end{example}

\begin{remark}\label{rem:NonLin}
Given a domain $\Omega,$ let $\Ss\subset L^1(\partial\Omega)$ denote the set of functions $f$ on $\partial\Omega$ for which there exists a solution to the Dirichlet problem with boundary data $f.$  Examples \ref{ex:GornyKa} and \ref{ex:NonLin} show that $\Ss$ does not necessarily form a vector space, even when $\Omega=B(0,1)\subset\R^2$.  In particular, $\Ss$ can be non-linear.  As shown above, $g$ and $\chi_{F}$ both have solutions, but $g-\chi_F=\chi_{K_{1/4}}=f$ has no solution. 
\end{remark}

\begin{remark}\label{rem:NonLoc}
Example~\ref{ex:NonLin} also shows that solutions to the Dirichlet problem for least gradient functions may be non-local in the following sense.  In Example~\ref{ex:NonLin}, a solution exists for boundary data $g=\chi_{K_{1/4}\cup F}.$  However, for any $I_{k,m}$ from the construction of $K_{1/4}$, let $\eta_{k,m}$ denote a Lipschitz function on $\partial\Omega$ such that $\eta_{k,m}=1$ on $I_{k,m}$ and whose support is contained in the union of $I_{k,m}$ and the two removed arcs adjacent to $I_{k,m}.$  Then there is no solution to the Dirichlet problem with boundary data $\eta_{k,m}g=\chi_{K_{1/4}\cap I_{k,m}}.$  This is because the set $K_{1/4}\cap I_{k,m}$ can be constructed in the same way that $K_{1/4}$ is constructed in Example~\ref{ex:GornyKa}.  By the choice of the parameter $a=1/4,$ the same inequality between the side and base lengths of the trapezoids created in the construction holds at every stage, and so the same argument from Example~\ref{ex:GornyKa} shows that no solution exists for boundary data $\eta_{k,m}g.$  This non-locality contrasts the case involving solutions to the Dirichlet problem for $p$-harmonic functions when $p>1.$  Such solutions are known to exist if the boundary data $f$ is the trace of a function in $N^{1,p}(\Omega).$  If $\eta$ is a compactly supported Lipschitz function on $\partial\Omega$, then $\eta f$ is similarly the trace of a function in $N^{1,p}(\Omega)$ and hence has a solution.  For more on the Dirichlet problem for $p$-harmonic functions and traces of functions of class $N^{1,p}$ when $p>1,$ see for example \cite{BB} and \cite{M}.   
\end{remark}

\section{Sufficient conditions for existence of solutions}

For the remainder of this paper, we assume that $(X,d,\mu)$ is a complete metric measure space supporting a $(1,1)$-Poincar\'e inequality, with $\mu$ a doubling Borel regular measure.  We also assume that $\Omega\subset X$ is a bounded domain, with $\mu(X\setminus\Omega)>0$, such that $\partial\Omega$ has positive mean curvature as in Definition~\ref{defn:MeanCurv}.  Furthermore, we assume that $\Ha(\partial\Omega)<\infty,$ that $\Ha\big|_{\partial\Omega}$ is doubling, and that $\Ha\big|_{\partial\Omega}$ is lower codimension 1 Ahlfors regular, as in \eqref{eq:CodAhlfReg}.  The unit disk in $\R^2$ satisfies the above conditions, and so Example~\ref{ex:GornyKa} above shows that an $L^1$-function on the boundary of such a domain need not have a solution. However, for such domains, it was shown in \cite{LMSS} that given $f\in C(\partial\Omega),$ there exists a solution $u\in BV(\Omega)$ to the Dirichlet problem with boundary data $f$.  We show that the solution for continuous data constructed in \cite{LMSS} is in fact the minimal solution, and that if an arbitrary $f\in L^1(\partial\Omega)$ can be approximated pointwise a.e.\ from above and below by continuous functions, then there exists a solution to the Dirichlet problem with boundary data $f$.  We point out that approximation of the boundary data in this manner was used in \cite{RS2} to show existence of solutions for convex polygonal domains in $\R^2.$  There, the boundary data belonged to $BV(\partial\Omega)$ and satisfied restrictive admissibility conditions with respect to the geometry of the boundary.

Since we are interested in finding solutions to the Dirichlet problem given data $f$ defined on $\partial\Omega,$ we need methods and control over extensions of $f$ into $\Omega$ and to the entire space $X$.  The following results from \cite{MSS} and \cite{LMSS} give existence and bounds for such extensions. 

\begin{prop}\label{prop:ExtBounds}\emph{(\cite[Propositions~4.2, 4.3]{MSS})}  There exists a nonlinear, bounded extension $E:L^1(\partial\Omega)\to BV(\Omega),$ satisfying
\begin{align*}
\|Ef\|_{L^1(\Omega)}&\le C\diam(\Omega)\|f\|_{L^1(\partial\Omega)}\text{ and }\\
 	\|DEf\|(\Omega)&\le C(1+\Ha(\partial\Omega))\|f\|_{L^1(\partial\Omega)}.
\end{align*}
Moreover, $TEf=f$ $\Ha$-a.e.\ on $\partial\Omega$.

\end{prop}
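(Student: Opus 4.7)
The plan is to construct $E$ explicitly via a Whitney-type decomposition of $\Omega$ together with a discrete convolution of $f$ against boundary averages. First I would cover $\Omega$ by balls $\{B_i\}_{i\in I}$, $B_i = B(x_i,r_i)$, with $r_i = \dist(x_i,\partial\Omega)/8$, chosen so that the dilations $\{2B_i\}$ have uniformly bounded overlap and $\bigcup_i B_i = \Omega$. Take a Lipschitz partition of unity $\{\phi_i\}$ subordinate to $\{2B_i\}$, with $\phi_i$ being $C/r_i$-Lipschitz and $\sum_i \phi_i \equiv 1$ on $\Omega$. For each $i$, pick $\hat x_i \in \partial\Omega$ with $d(x_i,\hat x_i) < 8 r_i$ and associate the boundary shadow ball $B'_i := B(\hat x_i, 16 r_i) \cap \partial\Omega$. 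Define
\begin{equation*}
Ef(x) := \sum_{i \in I}\phi_i(x)\, \vint_{B'_i} f\, d\Ha.
\end{equation*}

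For the $L^1$-bound, each term $\phi_i\vint_{B'_i}f\,d\Ha$ is supported on $2B_i$ and bounded in absolute value by $\vint_{B'_i}|f|\,d\Ha$. The lower codimension $1$ Ahlfors regularity \eqref{eq:CodAhlfReg} gives $\Ha(B'_i) \ge C\mu(B(\hat x_i, 16r_i))/r_i$, and doubling of $\mu$ yields $\mu(2B_i) \le C\mu(B(\hat x_i,16r_i))$. Combined with $r_i \le \diam(\Omega)$, this produces $\mu(2B_i)\vint_{B'_i}|f|\,d\Ha \le C\diam(\Omega)\int_{B'_i}|f|\,d\Ha$. Summing and using bounded overlap of $\{B'_i\}$ (inherited from the bounded overlap of $\{2B_i\}$ and doubling of $\Ha|_{\partial\Omega}$) yields the claimed bound.

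The BV estimate is the main obstacle. Inside each ball $B_i$, I would exploit $\sum_j \phi_j \equiv 1$ to write
\begin{equation*}
Ef(x) - \vint_{B'_i}f\,d\Ha = \sum_{j}\phi_j(x)\Bigl(\vint_{B'_j}f\,d\Ha - \vint_{B'_i}f\,d\Ha\Bigr),
\end{equation*}
where the sum effectively runs over the boundedly many $j$ with $2B_i\cap 2B_j \ne \emptyset$. A product-rule calculation then gives the pointwise upper-gradient bound
\begin{equation*}
g_{Ef}(x) \le C\sum_{j:\, x \in 2B_j}\frac{1}{r_j}\Bigl|\vint_{B'_j}f\,d\Ha - \vint_{B'_{i(x)}}f\,d\Ha\Bigr|
\end{equation*}
for any fixed choice $i(x)$ with $x \in B_{i(x)}$. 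For adjacent Whitney balls, the shadows $B'_j$ and $B'_{i(x)}$ lie in a common dilated boundary ball $C_0 B'_{i(x)}$, and doubling of $\Ha|_{\partial\Omega}$ yields $|\vint_{B'_j}f\,d\Ha - \vint_{B'_{i(x)}}f\,d\Ha| \le C\vint_{C_0 B'_{i(x)}}|f|\,d\Ha$. Integrating against $\mu$, the conversion $\mu(2B_j)/r_j \le C\,\Ha(C_0 B'_j)$ supplied by codimension $1$ Ahlfors regularity turns the $\mu$-integral into a sum over boundary shadows; bounded overlap of those shadows plus the count $r_i \le \diam(\Omega)$ produces the factor $1 + \Ha(\partial\Omega)$. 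To conclude that $Ef \in BV(\Omega)$, I would approximate by finite partial sums of the discrete convolution (which are locally Lipschitz, hence in $N^{1,1}_\loc(\Omega)$) and pass to the limit using the lower semicontinuity \eqref{eq:LowerSemiCty}.

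The trace property is handled at Lebesgue points: for $x \in \partial\Omega$ a Lebesgue point of $f$ with respect to $\Ha|_{\partial\Omega}$ (a set of full $\Ha$-measure by doubling), any Whitney ball $B_i$ meeting $B(x,r)\cap\Omega$ has radius $r_i$ bounded by a multiple of $r$, hence $B'_i \subset B(x,C'r)\cap\partial\Omega$; codimension $1$ Ahlfors regularity then gives $|\vint_{B'_i}f\,d\Ha - f(x)| \le C \vint_{B(x,C'r)\cap\partial\Omega}|f - f(x)|\,d\Ha \to 0$ as $r \to 0^+$. Combined with $\sum_i \phi_i \equiv 1$, this yields $\vint_{B(x,r)\cap\Omega}|Ef - f(x)|\,d\mu \to 0$ and so $TEf(x) = f(x)$. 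The non-linearity permitted in the statement is really a weakening: the construction above is linear in $f$, but the statement is stated so as to remain robust under subsequent useful modifications such as truncation at the $L^\infty$-level of $f$, which preserve all three bounds but destroy linearity.
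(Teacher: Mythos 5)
The statement you are proving is not actually proved in this paper: it is quoted verbatim from \cite[Propositions~4.2, 4.3]{MSS}, so the relevant comparison is with the construction there. Your Whitney/discrete-convolution scheme is the right starting point and your $L^1$-estimate and trace argument are essentially sound (for the $L^1$-bound the correct bookkeeping is that generation $n$ of Whitney balls contributes at most $Cr\,\|f\|_{L^1(\partial\Omega)}$ with $r\approx 2^{-n}\diam(\Omega)$, and the geometric sum over generations gives the factor $\diam(\Omega)$). But there is a genuine gap in the BV estimate, and it is located exactly where you wave at ``bounded overlap of those shadows.'' The shadows $B_i'$ have bounded overlap only \emph{within a single Whitney generation}; across generations they pile up on $\partial\Omega$ with infinite multiplicity. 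In the variation estimate the conversion $\mu(2B_j)/r_j\le C\,\Ha(C_0B_j')$ exactly cancels the factor $1/r_j$ coming from the Lipschitz constant of $\phii_j$, so each generation $n$ contributes a quantity comparable to $\int_{\partial\Omega}\fint_{B(x,C2^{-n})\cap\partial\Omega}|f(y)-f(x)|\,d\Ha(y)\,d\Ha(x)$, with no decaying factor left over. For $f$ in a Besov-type class these per-scale oscillations are summable, but for general $f\in L^1(\partial\Omega)$ (e.g.\ the characteristic function of a fat Cantor set) they need only tend to $0$, not summably, and the total variation of your linear extension is infinite. Your closing remark that ``the construction above is linear in $f$'' and that the nonlinearity in the statement is merely a permitted weakening is therefore the tell: by a classical theorem of Peetre there is \emph{no} bounded linear extension from $L^1$ of the boundary into $W^{1,1}$ or $BV$ of the domain, so no linear construction can work.

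The repair, which is what \cite{MSS} actually does (following Gagliardo's original nonlinear extension), is to choose the scales adaptively in terms of $f$. Since the boundary discrete convolutions $f_\rho$ converge to $f$ in $L^1(\partial\Omega)$ as $\rho\to0^+$, one may select a rapidly decreasing sequence of scales $\rho_n=\rho_n(f)$ with $\|f_{\rho_{n+1}}-f_{\rho_n}\|_{L^1(\partial\Omega)}\le 2^{-n}\|f\|_{L^1(\partial\Omega)}$, build the extension on the annular layers $\{\rho_{n+1}<\dist(\cdot,\partial\Omega)<\rho_n\}$ by interpolating between $f_{\rho_n}$ and $f_{\rho_{n+1}}$, and then the inter-layer contributions to $\|DEf\|(\Omega)$ form a convergent series while the intra-layer contributions are controlled as in your argument. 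The dependence of the scales $\rho_n$ on $f$ is precisely what makes $E$ nonlinear. Without this (or some equivalent device) the second displayed inequality of the proposition cannot be obtained.
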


\begin{prop}\label{prop:ExtToX}\emph{(\cite[Lemma~5.1, Proposition 5.2]{LMSS})}  There exists a nonlinear extension $\Ext: C(\partial\Omega)\to C(X)\cap BV(X)$ satisfying 
\begin{align*}
\|\Ext f\|_{L^\infty(X)}&\le\|f\|_{L^\infty(\partial\Omega)}+1\text{ and }\\
\| D\Ext f\|(X)&\le C\left(1+\Ha(\partial\Omega)\right)(\|f\|_{L^1(\partial\Omega)}+\|f\|_{L^\infty(\partial\Omega)}+1).
\end{align*}
Moreover, for $f\in C(\partial\Omega)$ and $z\in\partial\Omega,$ it follows that 
$$\lim_{(X\setminus\partial\Omega)\ni x\to z}\Ext f(x)=f(z).$$
\end{prop}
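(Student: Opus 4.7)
\textbf{Plan for Proposition~\ref{prop:ExtToX}.} My strategy is to construct $\Ext f$ by first producing a continuous extension of $f$ to $X$ via Tietze's theorem, then regularizing away from $\partial\Omega$ so as to obtain a finite total variation while preserving the correct boundary values. Since $X$ is a metric space, hence normal, Tietze yields some $F\in C(X)$ with $F|_{\partial\Omega}=f$ and $\|F\|_{L^\infty(X)}\le\|f\|_{L^\infty(\partial\Omega)}$. Multiplying $F$ by a $1$-Lipschitz cutoff $\eta$ that equals $1$ on a neighborhood of $\overline\Omega$ and vanishes outside a slightly larger neighborhood, I obtain a continuous, bounded, compactly supported function $\tilde F$ on $X$ satisfying $\tilde F|_{\partial\Omega}=f$ and $\|\tilde F\|_{L^\infty(X)}\le\|f\|_{L^\infty(\partial\Omega)}+1$. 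This $\tilde F$ is continuous but not a priori of bounded variation.

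Next, I would regularize $\tilde F$ on the open set $U:=X\setminus\partial\Omega$ by discrete convolution at a Whitney scale. Concretely, I take a Whitney-type cover $\{B_i\}$ of $U$ with $\rad(B_i)\approx\dist(B_i,\partial\Omega)$ and bounded overlap, together with a Lipschitz partition of unity $\{\phi_i\}$ subordinate to $\{2B_i\}$ with $\Lip(\phi_i)\le C/\rad(B_i)$, and set
$$\Ext f(x):=\begin{cases}\sum_i \phi_i(x)\,\tilde F_{B_i},& x\in U,\\ f(x),& x\in\partial\Omega,\end{cases}$$
where $\tilde F_{B_i}$ is the $\mu$-average of $\tilde F$ over $B_i$. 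Standard discrete convolution theory shows that $\Ext f$ is locally Lipschitz on $U$, and that for $x\in U$ it admits an upper gradient of the form $g(x)\le C\,\mathrm{osc}_{B(x,Cr(x))}\tilde F\,/\,r(x)$, where $r(x):=\dist(x,\partial\Omega)$. Continuity of $\Ext f$ across $\partial\Omega$ then follows from uniform continuity of $f$ on the compact set $\partial\Omega$ and the fact that the averaging balls for points $x$ near $z\in\partial\Omega$ shrink to $z$, which gives $\lim_{x\to z}\Ext f(x)=f(z)$.

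For the total variation estimate, I would split $\|D\Ext f\|(X)$ into its contributions on $\Omega$ and on $X\setminus\overline\Omega$. On each piece, I would apply the Poincaré inequality to the Whitney balls $\lambda B_i$ to bound $\int g\,d\mu$ by a sum of $L^1$-oscillations of $\tilde F$ over those balls; summing with bounded overlap and using $\rad(B_i)\approx\dist(B_i,\partial\Omega)$ together with the lower codimension $1$ Ahlfors regularity \eqref{eq:CodAhlfReg} (which gives $\Ha(\partial\Omega\cap B(x,r))\gtrsim\mu(B(x,r))/r$) gives a telescoping estimate whose total contribution is controlled by $\|\tilde F\|_{L^\infty}\Ha(\partial\Omega)$ and $\|\tilde F\|_{L^1}$; combining with the bounds on $\tilde F$ produces the $C(1+\Ha(\partial\Omega))(\|f\|_{L^1(\partial\Omega)}+\|f\|_{L^\infty(\partial\Omega)}+1)$ bound. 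Alternatively, inside $\Omega$ one can simply use the extension $Ef$ from Proposition~\ref{prop:ExtBounds}, then apply the Whitney construction only to $X\setminus\overline\Omega$ to handle the exterior, gluing across $\partial\Omega$ via traces.

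The main obstacle will be the BV estimate near $\partial\Omega$: the Whitney radii shrink to $0$ there, and the partition-of-unity derivatives blow up like $1/r(x)$, so a naive bound would diverge. Controlling this requires a careful telescoping argument over Whitney layers $\{2^{-k}\le r(x)<2^{-k+1}\}$ and crucial use of the lower codimension-$1$ Ahlfors regularity of $\Ha|_{\partial\Omega}$, which converts the volume growth of these layers into a summable contribution proportional to $\Ha(\partial\Omega)$. Once this estimate is in hand, the bounds on $\|\Ext f\|_{L^\infty(X)}$ and the boundary limit follow from the construction, completing the proof.
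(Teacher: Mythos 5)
This proposition is not proved in the paper at all: it is quoted verbatim from \cite[Lemma~5.1, Proposition~5.2]{LMSS}, so the comparison must be with the construction there. Your general architecture (Whitney-type cover of $X\setminus\partial\Omega$, Lipschitz partition of unity, discrete convolution, and conversion of $\mu(B_i)/\rad(B_i)$ into $\Ha(\partial\Omega\cap CB_i)$ via the lower codimension~$1$ Ahlfors regularity) is indeed the same machinery used in \cite{LMSS} and \cite{MSS}. The boundary-limit claim and the $L^\infty$ bound also go through as you describe.

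However, there is a genuine gap in the total variation estimate, and it sits exactly at the point you flag as ``the main obstacle.'' With a fixed dyadic Whitney decomposition, the contribution of the layer $\{2^{-k}\le\dist(\cdot,\partial\Omega)<2^{-k+1}\}$ is bounded by $C\,\Ha(\partial\Omega)$ times the oscillation of $f$ (equivalently of its averages) at scale $2^{-k}$, and these oscillations are \emph{not} summable in $k$ for a general $f\in C(\partial\Omega)$: take a modulus of continuity like $\omega(t)=1/\log(1/t)$, so the layer contributions behave like $1/k$. The quantity $\sum_k|f_{B(x,2^{-k})}-f_{B(x,2^{-k-1})}|$ is a total variation of a sequence and does not telescope to the total oscillation, so ``careful telescoping'' cannot rescue the dyadic construction. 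The missing idea --- and the reason the statement says the extension is \emph{nonlinear} --- is that the scales of the Whitney layers must be chosen lacunarily, depending on $f$ (via its modulus of continuity, so that the oscillation at the $k$-th chosen scale is at most $2^{-k}$, say); only then is the sum over layers controlled by $C(1+\Ha(\partial\Omega))(\|f\|_{L^1(\partial\Omega)}+\|f\|_{L^\infty(\partial\Omega)}+1)$. Your preliminary Tietze extension is harmless but also unnecessary once the averages are taken over boundary balls with respect to $\Ha$, which is what produces the $\|f\|_{L^1(\partial\Omega)}$ term. Finally, your proposed alternative of gluing the extension $Ef$ of Proposition~\ref{prop:ExtBounds} inside $\Omega$ to a Whitney extension outside does not work as stated, since $Ef$ is only in $BV(\Omega)$ and the resulting function would not lie in $C(X)$ as the proposition requires.
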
 

We will need the following lemmas.  

\begin{lem}\label{lem:TraceLeastGradSoln}
Let $f\in L^1(\partial\Omega),$ and let $u\in BV(\Omega)$ be a function of least gradient in $\Omega$ such that $Tu=f$ $\Ha$-a.e.\ on $\partial\Omega.$  Then, $u$ is a solution to the Dirichlet problem with boundary data $f$. 
\end{lem}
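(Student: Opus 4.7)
Fix an arbitrary competitor $v \in BV(\Omega)$ with $Tv = f$ $\Ha$-a.e.\ on $\partial\Omega$; the goal is to show $\|Du\|(\Omega) \le \|Dv\|(\Omega)$. The difficulty is that Definition~\ref{defn:LeastGradient} only compares $u$ to perturbations supported in compact subsets of $\Omega$, whereas $v$ may disagree with $u$ arbitrarily close to $\partial\Omega$. The plan is to truncate $v-u$ away from the boundary, apply the local least gradient inequality to the truncated competitor, and then use the matching-trace hypothesis to control the truncation error.

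Set $\rho(x) := \dist(x,\partial\Omega)$. For each small $\delta>0$ I would choose a $(C/\delta)$-Lipschitz cutoff $\eta_\delta\colon\Omega\to[0,1]$ with $\eta_\delta\equiv 1$ on $\{\rho\ge 2\delta\}$ and $\eta_\delta\equiv 0$ on $\{\rho\le\delta\}$, and define
$$v_\delta := \eta_\delta v + (1-\eta_\delta)u = u + \eta_\delta(v-u).$$
Since $\eta_\delta$ is Lipschitz and $u,v\in BV(\Omega)$, one gets $v_\delta\in BV(\Omega)$, and $v_\delta=u$ on the neighborhood $\{\rho<\delta\}$ of $\partial\Omega$. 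Hence $\overline{\{v_\delta\ne u\}}\subset\{\rho\ge\delta\}$, which is compact in $\Omega$ because $\Omega$ is bounded.

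Next I would apply the least gradient hypothesis. Because $\|Du\|$ and $\|Dv_\delta\|$ are finite Radon measures, they charge $\{\rho=s\}$ for only countably many $s$. Pick $s\in(0,\delta)$ outside that countable set and set $V_s:=\{\rho>s\}$; then $\overline{\{v_\delta\ne u\}}\subset V_s\Subset\Omega$, so Definition~\ref{defn:LeastGradient} yields $\|Du\|(V_s)\le\|Dv_\delta\|(V_s)$. On the open set $\Omega\setminus\overline{V_s}$ the functions $u$ and $v_\delta$ coincide, so $\|Du\|$ and $\|Dv_\delta\|$ agree there as Radon measures; moreover by the choice of $s$ neither measure charges $\partial V_s$. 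Summing the contributions on $V_s$ and $\Omega\setminus\overline{V_s}$ gives
$$\|Du\|(\Omega)\le\|Dv_\delta\|(\Omega).$$

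It then remains to show $\limsup_{\delta\to 0^+}\|Dv_\delta\|(\Omega)\le\|Dv\|(\Omega)$. Writing $v_\delta-v=(1-\eta_\delta)(u-v)$, the BV triangle inequality together with the Leibniz-type product rule for a Lipschitz function times a BV function give
$$\|Dv_\delta\|(\Omega)\le\|Dv\|(\Omega)+\|D(u-v)\|\bigl(\{\rho<2\delta\}\bigr)+\frac{C}{\delta}\int_{\{\delta<\rho<2\delta\}}|u-v|\,d\mu.$$
The middle term vanishes as $\delta\to 0^+$ because $\|D(u-v)\|\le\|Du\|+\|Dv\|$ is a finite Radon measure on $\Omega$ and $\{\rho<2\delta\}\cap\Omega\downarrow\emptyset$. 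The main obstacle is the third, boundary-layer, term. Since $T(u-v)=0$ $\Ha$-a.e.\ on $\partial\Omega$, one expects an integral version of the trace condition of the form
$$\frac{1}{\delta}\int_{\{0<\rho<\delta\}}|u-v|\,d\mu\;\longrightarrow\;\int_{\partial\Omega}|T(u-v)|\,d\Ha\;=\;0\quad\text{as }\delta\to 0^+.$$
Establishing this in the metric setting uses the doubling and lower codimension-$1$ Ahlfors regularity of $\Ha|_{\partial\Omega}$ together with the BV trace/extension theory developed in \cite{LS,MSS}. Invoking such a boundary-layer estimate closes the argument and yields $\|Du\|(\Omega)\le\|Dv\|(\Omega)$, so $u$ is a solution.
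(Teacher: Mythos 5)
Your overall strategy is the same as the paper's: since $T(v-u)=0$ $\Ha$-a.e.\ on $\partial\Omega$, replace $v$ by a perturbation of $u$ that is compactly supported in $\Omega$, invoke the least gradient property of $u$ against that perturbation, and then pass to the limit. The cutoff construction, the application of Definition~\ref{defn:LeastGradient} on $V_s$, and the Leibniz-type splitting of the error are all sound. The difference is that the paper does not build the compactly supported approximations by hand: it cites \cite[Theorem~6.9]{LS}, which says exactly that a $BV(\Omega)$ function with zero trace $\Ha$-a.e.\ can be approximated in the $BV$ norm by functions in $BV_c(\Omega)$; applying this to $w=v-u$ and testing $u$ against $u+w_k$ finishes the proof in three lines.

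The genuine gap in your write-up is the boundary-layer estimate, which you correctly identify as the main obstacle but then only assert. Two issues. First, the precise statement you invoke, namely $\delta^{-1}\int_{\{0<\rho<\delta\}}|u-v|\,d\mu\to\int_{\partial\Omega}|T(u-v)|\,d\Ha$, is stronger than anything the standing assumptions obviously provide: only \emph{lower} codimension-$1$ Ahlfors regularity of $\Ha|_{\partial\Omega}$ is assumed, so there is no automatic upper bound of $\mu(\{0<\rho<\delta\})/\delta$ by $\Ha(\partial\Omega)$, and the convergence of such boundary-layer averages to the trace integral is exactly the kind of statement whose proof occupies a substantial part of \cite{LS}. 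Second, what you actually need is weaker --- only that the boundary-layer term tends to $0$ when $T(u-v)=0$ $\Ha$-a.e.\ --- and that weaker statement is, in substance, the content of \cite[Theorem~6.9]{LS} (and of the layer-cake estimates behind it). So your proof is not wrong, but as written it reduces the lemma to an unproved claim that carries essentially all of its analytic content; to close it you should either prove the boundary-layer decay or, as the paper does, cite the zero-trace approximation theorem directly and skip the cutoff construction entirely.
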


\begin{proof}
Let $v\in BV(\Omega)$ such that $Tv=f$ $\Ha$-a.e. on $\partial\Omega.$  Then, $T(v-u)=0$ $\Ha$-a.e. on $\partial\Omega,$ and so by \cite[Theorem~6.9]{LS}, there exists $w_k\in BV_c(\Omega)$ for $k\in\N$ such that $w_k\to v-u$ in $BV(\Omega).$  Thus, it follows that 
$$\|D(u+w_k)\|(\Omega)\to\|Dv\|(\Omega),$$ as $k\to\infty.$  Since $u$ is of least gradient and $w_k\in BV_c(\Omega),$ we have that 
$$\|Du\|(\Omega)\le \|D(u+w_k)\|(\Omega)\to\|Dv\|(\Omega)$$ as $k\to\infty$, and so it follows that $\|Du\|(\Omega)\le\|Dv\|(\Omega).$
\end{proof}

\begin{lem}\label{lem:MinMaxSoln}  Let $f,g\in L^1(\partial\Omega)$ be such that $f\le g$ $\Ha$-a.e., and let $u,v\in BV(\Omega)$ be solutions to the Dirichlet problem with boundary data $f$ and $g$ respectively.  Then, $\min\{u,v\}$ and $\max\{u,v\}$ are solutions to the Dirichlet problem with boundary data $f$ and $g$ respectively.
\end{lem}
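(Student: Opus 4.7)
The plan is to use the standard lattice inequality for BV energies together with the minimality defining properties of $u$ and $v$. The argument has three moving parts: a trace identification, a BV submodularity inequality, and a ``add and compare'' step to extract equality.

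\textbf{Step 1: Identifying the traces.} First I would verify that $T\min\{u,v\}=f$ and $T\max\{u,v\}=g$ $\Ha$-a.e.\ on $\partial\Omega$. The pointwise elementary estimates $|\min\{a,b\}-\min\{c,d\}|\le |a-c|+|b-d|$ and $|\max\{a,b\}-\max\{c,d\}|\le |a-c|+|b-d|$ let me bound
\[
\fint_{B(x,r)\cap\Omega}\bigl|\min\{u,v\}-\min\{f(x),g(x)\}\bigr|\,d\mu \le \fint_{B(x,r)\cap\Omega}|u-f(x)|\,d\mu + \fint_{B(x,r)\cap\Omega}|v-g(x)|\,d\mu,
\]
and similarly for $\max$. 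At every $x\in\partial\Omega$ where $Tu(x)=f(x)$, $Tv(x)=g(x)$, and $f(x)\le g(x)$ all hold (a full $\Ha$-measure set), the right-hand sides tend to zero, giving $T\min\{u,v\}(x)=f(x)$ and $T\max\{u,v\}(x)=g(x)$ by Definition~\ref{defn:Trace}.

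\textbf{Step 2: The submodularity inequality.} Next I would invoke the standard lattice inequality for the BV total variation,
\[
\|D\min\{u,v\}\|(\Omega)+\|D\max\{u,v\}\|(\Omega)\le \|Du\|(\Omega)+\|Dv\|(\Omega).
\]
In the metric setting this follows by approximating $u$ and $v$ in $L^1_{\loc}(\Omega)$ by locally Lipschitz functions $u_i,v_i$ with $\int g_{u_i}\,d\mu\to\|Du\|(\Omega)$ and $\int g_{v_i}\,d\mu\to\|Dv\|(\Omega)$, noting that $g_{u_i}\chi_{\{u_i\ge v_i\}}+g_{v_i}\chi_{\{u_i<v_i\}}$ serves as an upper gradient of $\max\{u_i,v_i\}$ (and symmetrically for the minimum), and applying the lower semicontinuity \eqref{eq:LowerSemiCty}. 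So the two inequalities sum to the single inequality above.

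\textbf{Step 3: Closing the argument.} By Step 1, $\min\{u,v\}$ is an admissible competitor for the Dirichlet problem with data $f$, and $\max\{u,v\}$ is an admissible competitor for the problem with data $g$. Since $u$ is a solution with data $f$ and $v$ with data $g$, Definition~\ref{defn:Solution} gives
\[
\|Du\|(\Omega)\le\|D\min\{u,v\}\|(\Omega),\qquad \|Dv\|(\Omega)\le\|D\max\{u,v\}\|(\Omega).
\]
Adding these and comparing with Step 2 forces both inequalities to be equalities. Hence $\min\{u,v\}$ is a solution to the Dirichlet problem with boundary data $f$, and $\max\{u,v\}$ is a solution with boundary data $g$.

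I expect Step 2 to be the main point requiring care, since it is the only place where a non-trivial BV inequality enters; Steps 1 and 3 are essentially bookkeeping once the submodularity estimate is in hand. If the referenced lattice inequality is not cited explicitly earlier in the paper, I would either supply the short upper-gradient approximation argument sketched above or cite the corresponding fact from the BV theory on metric measure spaces (e.g.\ \cite{MMJr}, \cite{AMP}).
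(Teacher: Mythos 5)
Your proposal is correct and follows essentially the same route as the paper: identify the traces of $\min\{u,v\}$ and $\max\{u,v\}$, invoke the submodularity inequality $\|D\min\{u,v\}\|(\Omega)+\|D\max\{u,v\}\|(\Omega)\le\|Du\|(\Omega)+\|Dv\|(\Omega)$ (which the paper simply cites from \cite[Lemma~3.1]{L}, so no approximation argument is needed), and close with the same add-and-compare step. The only cosmetic difference is that your Step 1 uses the pointwise lattice inequality $|\min\{a,b\}-\min\{c,d\}|\le|a-c|+|b-d|$, which is a bit cleaner than the paper's case-splitting over $\{u\ge v\}$ and $\{v\le f(x)\}$.
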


\begin{proof}
By \cite[Lemma~3.1]{L}, it follows that
\begin{equation}\label{eq:MinMaxEnergy}
\|D\min\{u,v\}\|(\Omega)+\|D\max\{u,v\}\|(\Omega)\le\|Du\|(\Omega)+\|Dv\|(\Omega).
\end{equation}
For $x\in\partial\Omega$ such that $f(x)\le g(x)$ and setting $U_r:=B(x,r)\cap\Omega,$ we have that  
\begin{align*}
\fint_{U_r}|\min\{u,v\}-f(x)|d\mu&=\frac{1}{\mu(U_r)}\left(\int_{U_r\cap\{u\ge v\}}|v-f(x)|d\mu+\int_{U_r\cap\{v>u\}}|u-f(x)|d\mu\right)\\
	&\le\frac{1}{\mu(U_r)}\int_{U_r\cap\{u\ge v\}}|v-f(x)|d\mu+\fint_{U_r}|u-f(x)|d\mu,
\end{align*}
and it follows that 
\begin{align*}
\int_{U_r\cap\{u\ge v\}}&|v-f(x)|d\mu\\
	&=\int_{U_r\cap\{u\ge v\}\cap\{v>f(x)\}}|v-f(x)|d\mu+\int_{U_r\cap\{u\ge v\}\cap\{v\le f(x)\}}|v-f(x)|d\mu\\
	&\le \int_{U_r}|u-f(x)|d\mu+\int_{U_r}|v-g(x)|d\mu.
\end{align*}

Therefore, for $\Ha$-a.e. $x\in\partial\Omega,$ we have that 
$$\fint_{U_r}|\min\{u,v\}-f(x)|d\mu\le 2\fint_{U_r}|u-f(x)|d\mu+\fint_{U_r}|v-g(x)|d\mu\to 0$$ as $r\to 0^+.$  Thus, $T\min\{u,v\}=f$ $\Ha$-a.e.\ on $\partial\Omega,$ and a similar argument shows that $T\max\{u,v\}=g$ $\Ha$-a.e.\ on $\partial\Omega.$ 

Now, suppose that $\|D\min\{u,v\}\|(\Omega)>\|Du\|(\Omega).$  Then by $\eqref{eq:MinMaxEnergy}$ we have that $\|D\max\{u,v\}\|(\Omega)<\|Dv\|(\Omega).$  This is a contradiction since $T\max\{u,v\}=g$ $\Ha$-a.e., and $v$ is a solution to the Dirichlet problem with boundary data $g.$  Thus, we have that
$$\|D\min\{u,v\}\|(\Omega)\le\|Du\|(\Omega),$$ and so $\min\{u,v\}$ is a solution to the Dirichlet problem with boundary data $f.$  Similarly, we have that 
 $\max\{u,v\}$ is a solution to the Dirichlet problem with boundary data $g.$ 
\end{proof}

\begin{lem}\label{lem:SupLevTrace}
Let $f\in L^1(\partial\Omega),$ and suppose that $u\in L^1(\Omega)$ with $Tu=f$ $\Ha$-a.e.\ on $\partial\Omega.$  Then, for $\Leb$-a.e.\ $t\in\R,$ we have that $T\chi_{\{u>t\}}=\chi_{\{f>t\}}$ $\Ha$-a.e.\ on $\partial\Omega.$ 
\end{lem}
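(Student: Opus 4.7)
The plan is to fix a ``good'' point $x\in\partial\Omega$, exploit the trace hypothesis $Tu(x)=f(x)$ together with a Chebyshev-type inequality to locate the super-level set $\{u>t\}$ relative to $x$, and then handle the exceptional set $\{f=t\}$ by a pushforward-measure argument.

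Fix $t\in\R$ for the moment. I call $x\in\partial\Omega$ good if $Tu(x)=f(x)$ and $f(x)\ne t$. Set $U_r:=B(x,r)\cap\Omega$. If $x$ is good and $f(x)>t$, then on $U_r\cap\{u\le t\}$ we have $|u-f(x)|\ge f(x)-t>0$, so
$$(f(x)-t)\,\frac{\mu(U_r\cap\{u\le t\})}{\mu(U_r)}\;\le\;\fint_{U_r}|u-f(x)|\,d\mu\;\longrightarrow\;0$$
as $r\to 0^+$, by the definition of trace. Consequently $\mu(U_r\cap\{u\le t\})/\mu(U_r)\to 0$, which is precisely the statement that $T\chi_{\{u>t\}}(x)=1=\chi_{\{f>t\}}(x)$ in the sense of Definition~\ref{defn:Trace}. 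The case $f(x)<t$ is symmetric: on $U_r\cap\{u>t\}$ we have $|u-f(x)|\ge t-f(x)>0$, giving $\mu(U_r\cap\{u>t\})/\mu(U_r)\to 0$ and hence $T\chi_{\{u>t\}}(x)=0=\chi_{\{f>t\}}(x)$.

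It remains to show that the ``bad'' set has $\Ha$-measure zero for $\Leb$-a.e.\ $t$. The hypothesis gives $\Ha(\{x\in\partial\Omega:Tu(x)\ne f(x)\})=0$ for every $t$. For the remaining exceptional set $\{f=t\}$, consider the pushforward measure $\nu$ on $\R$ defined by $\nu(A):=\Ha(\{x\in\partial\Omega:f(x)\in A\})$, which is a finite Borel measure since $\Ha(\partial\Omega)<\infty$. Its set of atoms is at most countable, so $\nu(\{t\})=0$ for $\Leb$-a.e.\ $t\in\R$, which says exactly $\Ha(\{f=t\})=0$ for $\Leb$-a.e.\ $t$. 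For each such $t$, $\Ha$-a.e.\ $x\in\partial\Omega$ is good, and the first paragraph gives $T\chi_{\{u>t\}}(x)=\chi_{\{f>t\}}(x)$.

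The only step requiring any care is the observation that the level sets $\{f=t\}$ have $\Ha$-measure zero off a $\Leb$-null set of $t$'s; everything else is a two-line Chebyshev estimate against the defining limit of the trace. There is no need to invoke any finer trace theory, BV structure of $u$, or coarea formula here—only the pointwise definition of the trace and the finiteness of $\Ha$ on $\partial\Omega$.
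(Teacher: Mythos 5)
Your proof is correct and follows essentially the same route as the paper: a Chebyshev-type estimate against the defining limit of the trace at points where $Tu(x)=f(x)$ and $f(x)\ne t$, combined with the observation that $\Ha(\{f=t\})=0$ for all but countably many $t$. The only cosmetic difference is that you run the Chebyshev step directly while the paper phrases it as a proof by contradiction, and you spell out the pushforward/atoms justification that the paper merely asserts.
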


\begin{proof}
Since $\Ha(\partial\Omega)<\infty$, it follows that $\Ha(\{f=t\})=0$ for $\Leb$-a.e.\ $t\in\R.$  For such $t\in\R,$ let $x\in\partial\Omega\cap\{f\ne t\}$ be such that $Tu(x)=f(x).$  This property holds for $\Ha$-a.e. $x\in\partial\Omega.$  

We first consider the case when $f(x)>t.$ Setting $U_r:=B(x,r)\cap\Omega,$ we have that 
\begin{align*}
\fint_{U_r}|\chi_{\{u>t\}}-\chi_{\{f>t\}}(x)|d\mu&=\frac{1}{\mu(U_r)}\int_{U_r}|\chi_{\{u>t\}}-1|d\mu\\
	&=\frac{\mu(\{u\le t\}\cap U_r)}{\mu(U_r)}.
\end{align*}
Suppose that this quantity does not go to zero as $r$ goes to zero.  That is, suppose there exists $\eps_0>0$ such that for all $\delta>0,$ there exists $0<r_\delta<\delta$ such that 
$$\frac{\mu(\{u\le t\}\cap U_{r_\delta})}{\mu(U_{r_\delta})}\ge\eps_0.$$

Let $\eps:=|f(x)-t|\eps_0.$  Since $Tu(x)=f(x),$ there exists $\delta>0$ such that 
\begin{align*}
\eps&>\fint_{U_{r_\delta}}|u-f(x)|d\mu\\
	&=\frac{1}{\mu(U_{r_\delta})}\left(\int_{U_{r_\delta}\cap\{u>t\}}|u-f(x)|d\mu+\int_{U_{r_\delta}\cap\{u\le t\}}|u-f(x)|d\mu\right)\\
	&\ge\frac{1}{\mu(U_{r_\delta})}\int_{U_{r_\delta}\cap\{u\le t\}}|u-f(x)|d\mu\\
	&\ge|f(x)-t|\frac{\mu(\{u\le t\}\cap U_{r_\delta})}{\mu(U_{r_\delta})}\ge|f(x)-t|\eps_0=\eps,
\end{align*}
a contradiction.  Thus, it follows that 
$$\fint_{U_r}|\chi_{\{u>t\}}-\chi_{\{f>t\}}(x)|d\mu\to 0$$ as $r\to 0^+.$ 

If $f(x)<t,$ then we have that 
$$\fint_{U_r}|\chi_{\{u>t\}}-\chi_{\{f>t\}}(x)|d\mu=\frac{\mu(\{u>t\}\cap U_r)}{\mu(U_r)}.$$  If we suppose that this quantity does not go to zero as $r$ goes to zero, then we arrive at a contradiction by the same method as above. However, in this case, we use the fact that 
$$\frac{1}{\mu(U_{r_\delta})}\int_{U_{r_\delta}\cap\{u>t\}}|u-f(x)|d\mu\ge|f(x)-t|\frac{\mu(\{u>t\}\cap U_{r_\delta})}{\mu(U_{r_\delta})}.$$
Thus, for $\Ha$-a.e. $x\in\partial\Omega,$ we have that 
$$\lim_{r\to 0}\fint_{U_r}|\chi_{\{u>t\}}-\chi_{\{f>t\}}(x)|d\mu=0,$$ and so $T\chi_{\{u>t\}}=\chi_{\{f>t\}}$ $\Ha$-a.e. on $\partial\Omega.$ 
\end{proof}

\begin{remark}\label{remark:SupLevSoln}
If $u\in BV(\Omega)$ is a solution to the Dirichlet problem with boundary data $f$, then by the coarea formula, we have that $\chi_{\{u>t\}}\in BV(\Omega)$ for $\Leb$-a.e.\ $t\in\R.$  Moreover, as $u$ is necessarily of least gradient in $\Omega$, it follows that for each $t\in\R,$ $\chi_{\{u>t\}}$ is of least gradient in $\Omega$ as shown in \cite[Lemma~3.6]{HKLS}.  Thus, by Lemma \ref{lem:TraceLeastGradSoln} and Lemma \ref{lem:SupLevTrace}, it follows that $\chi_{\{u>t\}}$ is a solution to the Dirichlet problem with boundary data $\chi_{\{f>t\}}$ for $\Leb$-a.e. $t\in\R.$  In particular, if $f=\chi_F$ for some $F\subset\partial\Omega,$ and there exists a solution $u$ to the Dirichlet problem with boundary data $f$, we may assume that $u=\chi_E$ for some $E\subset\Omega$ with $P(E,\Omega)<\infty.$ 
\end{remark}

We now show that the solution constructed in \cite{LMSS} for continuous data is minimal in the following sense.

\begin{prop}\label{prop:MinSoln}  Let $f\in C(\partial\Omega).$  Then, there exists a solution $u\in BV(\Omega)$ to the Dirichlet problem with boundary data $f$, such that for any solution $u'$ to the said Dirichlet problem, we have that $u\le u'$ $\mu$-a.e.\ in $\Omega.$  
\end{prop}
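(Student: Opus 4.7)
The plan is to realize $u$ via the superlevel-set construction of \cite{LMSS}, but using the \emph{minimal} weak solution sets at each level. Let $F:=\Ext f\in C(X)\cap BV(X)$ from Proposition~\ref{prop:ExtToX}, so that $F\big|_{\partial\Omega}=f$ by the continuity statement in that proposition. For $\Leb$-a.e.\ $t\in\R$, the set $\{F>t\}$ is open, $P(\{F>t\},X)<\infty$ by coarea, and $\Ha(\{F=t\}\cap\partial\Omega)=0$ (disjoint level sets in a set of finite $\Ha$-measure). For each such $t$, let $E_t\subset X$ be the minimal weak solution set for boundary data $\chi_{\{F>t\}}$, whose existence and $\mu$-a.e.\ uniqueness are supplied by \cite[Proposition~3.7]{LMSS}. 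Remark~\ref{WeakSolnVSoln} then guarantees that $\chi_{E_t}\big|_\Omega$ is a solution to the Dirichlet problem with boundary data $\chi_{\{f>t\}}$. Define
\begin{equation*}
u(x):=\sup\{t\in\R: x\in E_t\},\qquad x\in\Omega.
\end{equation*}
Running the construction of \cite{LMSS} with these specific minimal $E_t$ yields $u\in BV(\Omega)$ solving the Dirichlet problem with boundary data $f$; the monotonicity $E_t\sqsubset E_s$ for $s<t$ required to make sense of the construction follows $\mu$-a.e.\ from uniqueness of minimal weak solutions together with a standard min/max comparison for weak solutions.

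For minimality, let $u'\in BV(\Omega)$ be any other solution with $Tu'=f$ $\Ha$-a.e.\ on $\partial\Omega$. By Remark~\ref{remark:SupLevSoln}, for $\Leb$-a.e.\ $t\in\R$ the superlevel set $\chi_{\{u'>t\}}$ is a solution with boundary data $\chi_{\{f>t\}}$. Extending $\{u'>t\}$ outside $\Omega$ by $\{F>t\}\setminus\Omega$ and applying Remark~\ref{WeakSolnVSoln} produces a weak solution set $\widetilde E_t$ for boundary data $\chi_{\{F>t\}}$. Minimality of $E_t$ gives $E_t\sqsubset\widetilde E_t$, and since both $E_t$ and $\widetilde E_t$ coincide with $\{F>t\}$ on $X\setminus\Omega$ by the definition of weak solution, restricting to $\Omega$ we obtain
\begin{equation*}
E_t\cap\Omega\sqsubset\{u'>t\}\cap\Omega.
\end{equation*}

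To pass from this family of level-set inclusions to $u\le u'$ $\mu$-a.e., pick a countable dense set $D\subset\R$ of values $t$ for which the above inclusion holds, and let $N\subset\Omega$ be the union of the corresponding $\mu$-null exceptional sets, which is still $\mu$-null. For $x\in\Omega\setminus N$, if $u(x)>s$ then choosing $t\in D$ with $s<t<u(x)$ gives $x\in E_t$ and hence $u'(x)>t>s$. Taking supremum over rational $s<u(x)$ yields $u'(x)\ge u(x)$, so $u\le u'$ $\mu$-a.e.\ in $\Omega$, as required.

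The main obstacle is the first step: one must verify carefully that the construction of the solution in \cite{LMSS} carries through when each $E_t$ is chosen to be the \emph{minimal} weak solution set. The required monotonicity, measurability of $u$, the membership $u\in BV(\Omega)$, and matching of $Tu$ with $f$ via the positive mean curvature assumption should all transfer unchanged, since minimal weak solutions are unique $\mu$-a.e.\ and the construction depends on the $E_t$ only through their $\mu$-a.e.\ equivalence classes; still, the details of monotonicity and of the coarea bound for $\|Du\|(\Omega)$ in terms of the $E_t$ warrant a careful check.
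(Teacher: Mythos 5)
Your proposal is correct and follows essentially the same route as the paper: build $u$ from the minimal weak solution sets $E_t$ for the superlevel sets of $\Ext f$ via the construction of \cite[Theorem~4.10]{LMSS}, use Remark~\ref{remark:SupLevSoln} together with minimality of each $E_t$ to get the level-set inclusions $E_t\cap\Omega\sqsubset\{u'>t\}$, and pass to $u\le u'$ $\mu$-a.e.\ by a countable dense set of levels. The one bookkeeping point to tighten is that your exceptional set $N$ must also absorb the $\mu$-null sets where the monotonicity $E_t\sqsubset E_s$ ($s<t$) fails, since your step ``$u(x)>t$ implies $x\in E_t$'' uses that monotonicity; the paper handles this with an explicit auxiliary null set $H$.
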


\begin{proof}
We follow the construction of a solution $u\in BV(\Omega)$ given in the proof of \cite[Theorem~4.10]{LMSS} and show that $u$ has the desired property.

Let $\Ext f\in C(X)\cap BV(X)$ be the extension of $f$ to $X$ given by Proposition~\ref{prop:ExtToX}.  For $t\in\R,$ let $F_t:=\{x\in X: \Ext f(x)>t\}.$  Then, $F_t$ is open by continuity of $\Ext f$, and since $\Ext f\in BV(X),$ it follows from the coarea formula that $P(F_t, X)<\infty$ for $\Leb$-a.e.\ $t\in\R.$  Moreover, if $x\in\partial F_t,$ then $\Ext f(x)=t$ by continuity of $\Ext f,$ and so $\partial F_t\cap \partial F_s=\varnothing$ for $s\ne t.$  Since $\Ha(\partial\Omega)<\infty,$ it then follows that $\Ha(\partial\Omega\cap\partial F_t)=0$ for $\Leb$-a.e. $t\in\R.$  Let
$$J:=\{t\in\R: P(F_t,X)<\infty\text{ and }\Ha(\partial\Omega\cap\partial F_t)=0\}.$$

For each $t\in J,$ there exists a unique minimal solution set $\widetilde E_t$ to the Dirichlet problem with boundary data $\chi_{F_t},$ \cite[Proposition~3.7, 4.8]{LMSS}.  Setting 
$$E_t:=\{x\in X:\chi_{\widetilde E_t}^\vee(x)>0\},$$ it follows that $\chi_{E_t}$ is also a minimal solution.  Since $\Leb(\R\setminus J)=0,$ we can find a countable set $I\subset J$ such that $I$ is dense in $\R.$  Let $u:X\to\R$ be given by 
$$u(x)=\sup\{s\in I: x\in E_s\}.$$  In \cite[Theorem~4.10]{LMSS}, it is shown that $u\in BV(\Omega)$ and $u$ is a solution to the Dirichlet problem with boundary data $f$.

Now, let $u'\in BV(\Omega)$ be another solution to the Dirichlet problem with boundary data $f.$  Then, setting $$E'_t=\{u'>t\},$$ it follows from the discussion in Remark \ref{remark:SupLevSoln} that $\chi_{E'_t}$ is a solution to the Dirichlet problem with boundary data $\chi_{F_t}$ for $\Leb$-a.e.\ $t\in\R.$ Letting 
$$J':=\{t\in J:\chi_{E'_t}\text{ is a solution for }\chi_{F_t}\}\subset J,$$ we have that $\Leb(\R\setminus J')=0.$  Hence, there exists a countable set $I'\subset J'$ such that $I'$ is dense in $\R.$  For each $s\in I',$ we have that $\chi_{E_s}\le\chi_{E'_s}$ $\mu$-a.e., since $\chi_{E_s}$ is the minimal solution for $\chi_{F_s}.$  Therefore, letting 
$$G_s:=E_s\setminus E'_s=\left\{x\in \Omega: \chi_{E'_s}(x)<\chi_{E_s}(x)\right\},$$ we have that $\mu(G_s)=0.$  Letting $$G:=\bigcup_{s\in I'} G_s,$$ it follows that $\mu(G)=0.$ 

Moreover, for $s^*\in I'$ and $s\in I$ such that $s^*<s,$ we have from \cite[Lemma~3.8]{LMSS} that $E_s\sqsubset E_{s^*}$.  Let 
$$H_{s^*,s}:=\{x\in\Omega:x\in E_s\setminus E_{s^*}\}.$$  
Then, $\mu(H_{s^*,s})=0.$  Let 
$$H:=\bigcup_{s^*\in I'}\bigcup_{s\in I, s>s^*}H_{s^*,s}.$$  Then, we have $\mu(H)=0.$   

Let $x\in X\setminus (G\cup H),$ and suppose that $u'(x)<u(x)=\sup\{s\in I: x\in E_s\}.$  Then, by the definition of $u$, there exists $s\in I$ with $u'(x)<s\le u(x)$ such that $x\in E_s,$ and by the density of $I'$ in $\R,$ there exists $s^*\in I'$ such that 
$$u'(x)<s^*<s\le u(x).$$  Since $s^*<s$ and $x\not\in H,$ we have that $x\in E_{s^*}.$  Moreover, since $u'(x)<s^*,$ it follows that $x\not\in E'_{s^*}.$  Thus we have that 
$$\chi_{E'_{s^*}}(x)<\chi_{E_{s^*}}(x).$$  However, this is a contradiction since $x\not\in G_{s^*}.$  Therefore, it follows that $u(x)\le u'(x),$ and since $\mu (G\cup H)=0,$ we have that $u\le u'$ $\mu$-a.e. in $\Omega.$
\end{proof}

\begin{remark}
If $v$ is another solution which is minimal in the sense of Proposition~\ref{prop:MinSoln}, then $u=v$ $\mu$-a.e.  Thus, we call $u$ the {\it minimal solution} to the Dirichlet problem with boundary data $f$.
\end{remark}

As an immediate corollary, we obtain a comparison-type result for minimal solutions.

\begin{cor}\label{cor:CompThm}  Let $f,g\in C(\partial\Omega)$ be such that $f\le g$ $\Ha$-a.e.\ on $\partial\Omega$, and let $u$ and $v$ be minimal solutions to the Dirichlet problem with boundary data $f$ and $g$ respectively.  Then $u\le v$ $\mu$-a.e.\ in $\Omega.$  

\end{cor}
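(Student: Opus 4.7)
The plan is to combine Lemma~\ref{lem:MinMaxSoln} with the minimality property of Proposition~\ref{prop:MinSoln} in the most direct way possible. Since both $f$ and $g$ are continuous and $f \le g$ $\Ha$-a.e. on $\partial\Omega$, and since $u, v$ are (in particular) solutions to the Dirichlet problem with boundary data $f$ and $g$ respectively, Lemma~\ref{lem:MinMaxSoln} applies and yields that $\min\{u,v\} \in BV(\Omega)$ is itself a solution to the Dirichlet problem with boundary data $f$.

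Next, I would invoke Proposition~\ref{prop:MinSoln}: because $u$ is the minimal solution with data $f$, every solution $u'$ with the same boundary data satisfies $u \le u'$ $\mu$-a.e.\ in $\Omega$. Taking $u' = \min\{u,v\}$ gives $u \le \min\{u,v\}$ $\mu$-a.e.\ in $\Omega$. On the other hand, trivially $\min\{u,v\} \le u$ pointwise in $\Omega$, so combining these two inequalities yields $u = \min\{u,v\}$ $\mu$-a.e., which is precisely the statement that $u \le v$ $\mu$-a.e.\ in $\Omega$.

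There is really no substantive obstacle here; the entire content of the corollary is packaged into the two preceding results. The only thing to be careful about is verifying that the hypotheses of Lemma~\ref{lem:MinMaxSoln} are genuinely met, namely that $u$ and $v$ are solutions (not merely minimal solutions in some weaker sense), and that $f \le g$ $\Ha$-a.e.\ suffices — both of which are immediate from the statement. One could equivalently run the argument through the maximum instead: since $v$ is minimal for $g$, one gets $v \le \max\{u,v\}$ $\mu$-a.e., and together with $\max\{u,v\} \ge v$ this again forces $u \le v$ $\mu$-a.e. Either version produces a proof of just a few lines.
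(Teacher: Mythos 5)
Your main argument is correct and is essentially identical to the paper's proof: apply Lemma~\ref{lem:MinMaxSoln} to see that $\min\{u,v\}$ is a solution with data $f$, then use minimality of $u$ from Proposition~\ref{prop:MinSoln} to get $u\le\min\{u,v\}\le v$ $\mu$-a.e. One caution about your proposed alternative ``through the maximum'': minimality of $v$ only yields $v\le\max\{u,v\}$, which is the same trivial inequality as $\max\{u,v\}\ge v$ and forces nothing, so that variant does not actually close the argument --- the asymmetry is unavoidable because minimality gives lower bounds only, and $\min\{u,v\}$ is the object that is both bounded below by $u$ (via minimality) and above by $v$ (trivially).
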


\begin{proof}
Existence of the minimal solutions $u$ and $v$ is guaranteed by Proposition~\ref{prop:MinSoln}.  By Lemma \ref{lem:MinMaxSoln}, $\min\{u,v\}$ is a solution to the Dirichlet problem with boundary data $f.$  Therefore, as $u$ is a minimal solution, we have that 
$$u\le\min\{u,v\}\le v$$ 
$\mu$-a.e.\ in $\Omega.$  
\end{proof}  

We note that in this setting, uniqueness of solutions is not guaranteed even for Lipschitz boundary data (see \cite{LMSS}), and so such a comparison theorem may not hold for solutions which are not minimal.  

Using the previous corollary, we are now able to establish Theorem~\ref{thm:SeqApprox}.

\begin{proof}[Proof of Theorem~\ref{thm:SeqApprox}]
For $k\in\N$, let $u_k$ and $v_k$ be the minimal solutions for the Dirichlet problem with boundary data $g_k$ and $h_k$, respectively.  The existence of such minimal solutions follows from Proposition \ref{prop:MinSoln} above.  By Corollary \ref{cor:CompThm}, we have  
\begin{equation}\label{eq:CompIneq}u_k\le u_{k+1}\le v_{k+1}\le v_k\end{equation} $\mu$-a.e.\ in $\Omega.$  Let $u:X\to\R$ be given by $$u:=\sup_{k}u_k=\lim_{k\to\infty}u_k.$$

We have that $|u_k-u|\le 2\max\{|u_1|,|v_1|\},$ $\mu$-a.e. in $\Omega,$ and since $u_1,v_1\in L^1(\Omega),$ it follows that 
$$\int_{\Omega}2\max\{|u_1|,|v_1|\}d\mu<\infty.$$
Therefore, by the dominated convergence theorem, we have that 
$$\lim_{k\to\infty}\int_{\Omega}|u_k-u|d\mu=0,$$ and so it follows that $u_k\to u$ in $L^1(\Omega).$  By \cite[Proposition~3.1]{HKLS}, it follows that $u$ is a function of least gradient in $\Omega.$

Let $Eg_k\in BV(\Omega)$ be the extension of $g_k$ into $\Omega$, as given by Proposition \ref{prop:ExtBounds}.  Since $u_k$ is a solution to the Dirichlet problem with boundary data $g_k$, and since $u_k\to u$ in $L^1(\Omega),$ we have by lower semi-continuity of the BV energy and the bounds from Proposition \ref{prop:ExtBounds} that 
\begin{align*}
\|Du\|(\Omega)\le\liminf_{k\to\infty}\|Du_k\|(\Omega)&\le\liminf_{k\to\infty}\|DEg_k\|(\Omega)\\
	&\le\liminf_{k\to\infty}C(1+\Ha(\partial\Omega))\|g_k\|_{L^1(\partial\Omega)}\\
	&\le C(1+\Ha(\partial\Omega))\|\max\{|g_1|,|h_1|\}\|_{L^1(\partial\Omega)}<\infty.
\end{align*}
Thus, $u\in BV(\Omega).$    

It remains to show that $Tu=f$ $\Ha$-a.e.\ on $\partial\Omega.$  Let $x\in\partial\Omega$ be such that $g_k(x),h_k(x)\to f(x)$ as $k\to\infty,$ and such that for all $k\in\N$, $Tu_k(x)=g_k(x)$ and $Tv_k(x)=h_k(x).$  By the hypothesis, these conditions hold for $\Ha$-a.e.\ $x\in\partial\Omega.$  Then we have that 

\begin{align*}
\fint_{U_r}|u-f(x)|d\mu=\frac{1}{\mu(U_r)}\left(\int_{U_r\cap\{u>f(x)\}}|u-f(x)|d\mu+\int_{U_r\cap\{u\le f(x)\}}|u-f(x)|d\mu\right).
\end{align*}     
Since $\eqref{eq:CompIneq}$ holds for all $k\in\N,$ it follows that $|u(y)-f(x)|\le|v_k(y)-f(x)|$ for $\mu$-a.e. $y\in U_r\cap\{u>f(x)\},$ and similarly, $|u(y)-f(x)|\le|u_k(y)-f(x)|$ for $\mu$-a.e. $y\in U_r\cap\{u\le f(x)\}.$  Therefore we have that 
\begin{align*}
\fint_{U_r}|u-f(x)|d\mu&\le\frac{1}{\mu(U_r)}\left(\int_{U_r\cap\{u>f(x)\}}|v_k-f(x)|d\mu+\int_{U_r\cap\{u\le f(x)\}}|u_k-f(x)|d\mu\right)\\
	&\le\fint_{U_r}|v_k-f(x)|d\mu+\fint_{U_r}|u_k-f(x)|d\mu.
\end{align*}  

Let $\eps>0.$  Since $g_k(x),h_k(x)\to f(x)$ as $k\to\infty,$ we can choose $k\in\N$ sufficiently large such that $|g_k(x)-f(x)|,|h_k(x)-f(x)|<\eps.$  Thus, it follows that 
\begin{align*}
\fint_{U_r}|u-&f(x)|d\mu\\\
	&\le\fint_{U_r}|v_k-h_k(x)|+|h_k(x)-f(x)|d\mu+\fint_{U_r}|u_k-g_k(x)|+|g_k(x)-f(x)|d\mu\\
	&<\fint_{U_r}|v_k-h_k(x)|d\mu+\fint_{U_r}|u_k-g_k(x)|d\mu+2\eps.
\end{align*}
Since $Tu_k(x)=g_k(x)$ and $Tv_k(x)=h_k(x),$ we have that 
$$\lim_{r\to 0^+}\fint_{U_r}|u-f(x)|d\mu<2\eps,$$ and so taking $\eps\to 0,$ it follows that $Tu(x)=f(x).$  Thus, $Tu=f$ $\Ha$-a.e.\ on $\partial\Omega,$ and so by Lemma \ref{lem:TraceLeastGradSoln}, $u\in BV(\Omega)$ is a solution to the Dirichlet problem with boundary data $f$. 

Let $u'\in BV(\Omega)$ be a solution to the Dirichlet problem with boundary data $f.$  For each $k\in\N,$ it follows that $\min\{u_k, u'\}$ is a solution to the Dirichlet problem with boundary data $g_k$, by Lemma \ref{lem:MinMaxSoln}.  However, as $u_k$ is a minimal solution, it follows that $u_k\le\min\{u_k,u'\}\le u'$ $\mu$-a.e.\ in $\Omega.$  Since $u_k\to u$ pointwise $\mu$-a.e., it follows that $u\le u'$ $\mu$-a.e.\ in $\Omega.$  Thus $u$ is a minimal solution, and uniqueness follows from minimality.   
\end{proof}

\begin{example}\label{ex:NotConverge}
We point out that the functions $v_k$ above need not converge to the minimal solution $u.$  Let $\Omega\subset\R^2$ be the (unweighted) unit disc, and let 
$$f(x,y)=
\begin{cases}
1,&|y|>1/\sqrt 2\\
0,&|y|\le 1/\sqrt 2.
\end{cases}
$$
Letting
\begin{align*}
g_k(x,y)&=
\begin{cases}
1,&|y|\ge 1/\sqrt 2+2/k\\
k(y-(1/\sqrt 2+1/k)),& 1/\sqrt 2+1/k<y<1/\sqrt 2+2/k\\
-k(y+1/\sqrt 2+1/k),& -(1/\sqrt 2+2/k)<y<-(1/\sqrt 2+1/k)\\
0,&|y|\le 1\sqrt 2+1/k
\end{cases}
\end{align*}
\text{and}
\begin{align*}
h_k(x,y)&=
\begin{cases}
1,&|y|\ge 1/\sqrt 2-1/k\\
k(y-(1/\sqrt 2-2/k)),&1/\sqrt 2<y<1/\sqrt 2-1/k\\
-k(y+1/\sqrt 2-2/k),&-(1/\sqrt 2-1/k)<y<-(1/\sqrt 2-2/k)\\
0,&|y|\le 1\sqrt 2-2/k,
\end{cases}
\end{align*}
for sufficiently large $k$, we have that $g_k\le f\le h_k.$  It follows that the $u_k$ converge to the minimal solution $u=\chi_{\Omega\cap\{|y|>1/\sqrt 2\}}$ to the Dirichlet problem with boundary data $f$, and the $v_k$ converge to the maximal solution $v=\chi_{\Omega\cap\{|x|<1/\sqrt 2\}}.$  For more on non-uniqueness of solutions, see \cite[Example~2.7]{MRL} and \cite{Gnon}.
\end{example}

As a consequence of Theorem~\ref{thm:SeqApprox}, we are able to prove Theorem~\ref{thm:HZeroBoundary}.  Recall that given $F\subset\partial\Omega,$ we let $\widetilde\partial F$ denote the boundary of $F$ {\it relative} to $\partial\Omega.$

\begin{proof}[Proof of Theorem~\ref{thm:HZeroBoundary}]
For $\eps>0,$ let $\eta^+_\eps:\partial\Omega\to\R$ be given by 
$$\eta^+_\eps(x)=\max\left\{1-\frac{\dist(x,F)}{\eps},0\right\}.$$
Then $\eta_\eps^+$ is continuous, and $\chi_F\le\eta^+_{\eps_1}\le\eta_{\eps_2}^+$ on $\partial\Omega$ for $\eps_1<\eps_2.$  If $x\in F,$ then $\eta_\eps^+(x)=1$ for all $\eps>0.$  Likewise, if $x\in\partial\Omega$ is in the exterior of $F$ relative to $\partial\Omega,$ then $\eta_\eps^+(x)=0$ for sufficiently small $\eps>0.$  Therefore, $\lim_{\eps\to 0}\eta_\eps^+(x)=\chi_F(x)$ for all $x\in\partial\Omega\setminus\wtil\partial F,$ hence $\Ha$-a.e.\ on $\partial\Omega.$ 

Similarly, for $\eps>0,$ let $\eta_\eps^-:\partial\Omega\to\R$ be given by 
$$\eta_\eps^-(x)=\min\left\{\frac{\dist(x,\Omega\setminus F)}{\eps},1\right\}.$$
Then $\eta_\eps^-$ is continuous, and $\eta_{\eps_2}^-\le\eta_{\eps_1}^-\le\chi_F$ on $\partial\Omega$ for $\eps_1<\eps_2.$  Similarly since $\Ha(\wtil\partial F)=0,$ we have that $\eta_\eps^-\to\chi_F$ $\Ha$-a.e.\ on $\partial\Omega$ as $\eps\to 0.$  Having constructed the necessary approximating sequences, Theorem~\ref{thm:SeqApprox} gives the existence of the minimal solution $u\in BV(\Omega).$   
\end{proof}

We note that existence of a solution in Theorem~\ref{thm:HZeroBoundary} also follows from the results of Section 5.  However, minimality of the solution is not guaranteed by those results.  

As the following example shows, some sets $F\subset\partial\Omega$ with poorly behaved boundaries still satisfy the $\Ha(\wtil\partial F)=0$ condition.  Theorem~\ref{thm:HZeroBoundary} gives us a way to ensure existence of minimal solutions even in these cases.

\begin{example}
Let $\Omega=B(0,1)\subset\R^3,$ and let $K\subset\partial\Omega=S^2$ be a bi-Lipschitz embedding of the von Koch snowflake domain in $\R^2$ onto $S^2.$  Then, $\wtil\partial K$ is a curve of infinite length, and so $\chi_K\in L^1(\partial\Omega)\setminus BV(\partial\Omega).$   Since $\Ha\simeq\Ha^2$ in this example, where $\Ha^2$ is the standard 2-dimensional Hausdorff measure, it follows that $\Ha(\wtil\partial K)=0$.  Hence by Theorem~\ref{thm:HZeroBoundary}, there exists a minimal solution to the Dirichlet problem with boundary data $\chi_K.$  

\end{example}

\begin{remark}\label{rem:NotSharp1}  It should be noted that the conditions imposed on $f$ in Theorem~\ref{thm:SeqApprox} are rather strict.  Unbounded functions are excluded, for example. Furthermore, the conditions are not necessary in general to guarantee existence of a solution.  As seen in Example~\ref{ex:NonLin} above, the function $g$ has a solution but cannot be approximated $\Ha$-a.e.\ from below by continuous functions.  Likewise, Example~\ref{ex:NonLin} also shows that the conditions imposed on a subset of $\partial\Omega$ in Theorem~\ref{thm:HZeroBoundary} are not necessary in general.  Since $g=\chi_{K_{1/4}\cup F},$ we have that $\Ha(\wtil\partial(K_{1/4}\cup F))=\Ha(\wtil\partial K_{1/4})=\Ha(K_{1/4})=1/2.$   
\end{remark}

\section{Solutions at points of continuity of the boundary data}

In this section, we consider an arbitrary $f\in L^1(\partial\Omega),$ and show that there exists a least gradient function $u\in BV(\Omega)$ whose trace agrees with $f$ at points of continuity of $f$.  The argument follows that of \cite[Theorem~3.1]{G2}, where an analogous result was shown for strictly convex domains in $\R^n.$  However, our results apply even for domains that are not strictly convex (see Remark~\ref{rem:Cylinder}).  We first need the following lemmas regarding minimal solution sets.

\begin{lem}\label{lem:DisjtSoln}  Let $F_1,F_2\subset X$ be open sets such that $P(F_1,X),P(F_2,X)<\infty$ and $\Ha(\partial F_1\cap\partial\Omega)=0=\Ha(\partial F_2\cap\partial\Omega).$  Suppose also that $F_1\cap F_2\cap\partial\Omega=\varnothing,$ and let $E_1, E_2\subset~X$ be minimal solution sets to the Dirichlet problem with boundary data $\chi_{F_1}$ and $\chi_{F_2}$ respectively.  Then, $\mu(E_1\cap E_2\cap\Omega)=0.$  
\end{lem}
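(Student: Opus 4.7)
My plan is a sandwich argument.  Set $u_i:=\chi_{E_i\cap\Omega}$ and $\phi:=\chi_{E_1\cap E_2\cap\Omega}$.  I will produce the perturbed function $u_i-\phi=\chi_{(E_i\setminus E_j)\cap\Omega}$ for $\{i,j\}=\{1,2\}$, show it is an admissible competitor for the Dirichlet problem satisfied by $u_i$, and sandwich $\|Du_1\|(\Omega)+\|Du_2\|(\Omega)$ against $P(E_1\setminus E_2,\Omega)+P(E_2\setminus E_1,\Omega)$ using the solution property of $u_i$ on one side and submodularity of the BV energy on the other.  Equality throughout the sandwich will make each $u_i-\phi$ itself a solution with the same trace as $u_i$, and the minimality of $u_i$ will then force $\phi=0$.

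First I would translate the minimality of the weak solution sets $E_i$ in $X$ into minimality of $u_i$ as a solution in $\Omega$: under positive mean curvature together with $P(F_i,X)<\infty$ and $\Ha(\partial F_i\cap\partial\Omega)=0$, Remark~\ref{WeakSolnVSoln} says $u_i$ is a solution in $\Omega$ with trace $Tu_i=\chi_{F_i\cap\partial\Omega}$ $\Ha$-a.e.\ on $\partial\Omega$; conversely, any solution $\chi_{E'}\in BV(\Omega)$ with the same trace extends to a weak solution $\chi_{E'\cup(F_i\setminus\Omega)}$ of the problem for $\chi_{F_i}$, so minimality of $E_i$ forces $E_i\cap\Omega\sqsubset E'$.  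The hypothesis $F_1\cap F_2\cap\partial\Omega=\varnothing$ then gives $Tu_1(x)\,Tu_2(x)=0$ for $\Ha$-a.e.\ $x\in\partial\Omega$, and since $\chi_{E_1\cap E_2}\le\chi_{E_i}$ pointwise, averaging on $B(x,r)\cap\Omega$ shows $T\phi=0$ $\Ha$-a.e.\ on $\partial\Omega$.  Hence $u_i-\phi$ has the same trace as $u_i$, is an admissible competitor, and the solution property of $u_i$ yields
\[
P(E_i,\Omega)=\|Du_i\|(\Omega)\le\|D(u_i-\phi)\|(\Omega)=P(E_i\setminus E_j,\Omega),
\]
which on summing over $i=1,2$ gives
\[
P(E_1,\Omega)+P(E_2,\Omega)\le P(E_1\setminus E_2,\Omega)+P(E_2\setminus E_1,\Omega).
\]

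The reverse inequality comes from the submodularity of BV energy (\cite[Lemma~3.1]{L}, as invoked in the proof of Lemma~\ref{lem:MinMaxSoln}), applied to $\chi_{E_1}$ and $\chi_{E_2^c}$: one has $\min\{\chi_{E_1},\chi_{E_2^c}\}=\chi_{E_1\setminus E_2}$ and the complement of $\max\{\chi_{E_1},\chi_{E_2^c}\}$ is $E_2\setminus E_1$, so using $P(E,\Omega)=P(E^c,\Omega)$ twice,
\[
P(E_1\setminus E_2,\Omega)+P(E_2\setminus E_1,\Omega)\le P(E_1,\Omega)+P(E_2,\Omega).
\]
Equality therefore holds throughout, so $\|D(u_i-\phi)\|(\Omega)=\|Du_i\|(\Omega)$, and $u_i-\phi$ is itself a solution with the same trace as $u_i$.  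The minimality of $u_i$ then gives $u_i\le u_i-\phi$ $\mu$-a.e.\ in $\Omega$, forcing $\phi\le 0$; since $\phi\ge 0$, we conclude $\phi=0$ $\mu$-a.e., which is exactly $\mu(E_1\cap E_2\cap\Omega)=0$.

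The main obstacle I anticipate is orienting the submodularity inequality so that it combines with the competitor inequality into an equality: it must be applied to $(\chi_{E_1},\chi_{E_2^c})$, not to $(\chi_{E_1},\chi_{E_2})$, so that the $\min$ and $\max$ produce the symmetric-difference sets matching the other bound.  A secondary care point is the trace identity $T\phi=0$, which combines Remark~\ref{WeakSolnVSoln} (to identify $T\chi_{E_i}=\chi_{F_i}$ $\Ha$-a.e.) with the disjointness hypothesis on $\partial\Omega$.
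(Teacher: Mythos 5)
Your proof is correct and takes essentially the same route as the paper's: the same trace computation exploiting $F_1\cap F_2\cap\partial\Omega=\varnothing$ to show $\chi_{E_1\setminus E_2}$ and $\chi_{E_2\setminus E_1}$ remain admissible competitors, the same submodularity bound $P(E_1\setminus E_2,\Omega)+P(E_2\setminus E_1,\Omega)\le P(E_1,\Omega)+P(E_2,\Omega)$, and the same appeal to minimality of the solution sets to conclude $\mu(E_1\cap E_2\cap\Omega)=0$. The only cosmetic differences are that you close the argument by forcing equality throughout a sandwich where the paper argues by contradiction, and you invoke the min/max submodularity for BV functions applied to $\chi_{E_1},\chi_{E_2^c}$ where the paper uses the set-theoretic version from Miranda's Proposition~4.7 directly.
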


\begin{proof}  Let $\partial\Omega^*:=\{x\in\partial\Omega: T\chi_{E_1}(x)=\chi_{F_1}(x)\text{ and }T\chi_{E_2}(x)=\chi_{F_2}(x)\}.$  We claim that $T\chi_{E_1\setminus E_2}(x)=\chi_{F_1}(x)$ for $x\in\partial\Omega^*,$ and hence for $\Ha$-a.e.\ $x\in\partial\Omega.$  

Indeed,  for $x\in\partial\Omega^*\cap F_1,$ (setting $U_r:=B(x,r)\cap\Omega)$, we have that 
\begin{align*}
\fint_{U_r}|\chi_{E_1\setminus E_2}-&\chi_{F_1}(x)|d\mu\\
&=\frac{1}{\mu(U_r)}\left(\int_{U_r\cap E_2}|\chi_{E_1\setminus E_2}-1|d\mu+\int_{U_r\setminus(E_1\cup E_2)}|\chi_{E_1\setminus E_2}-1|d\mu\right)\\
	&=\frac{\mu(U_r\cap E_2)}{\mu(U_r)}+\frac{\mu(U_r\setminus(E_1\cup E_2))}{\mu(U_r)}\\
	&\le\frac{\mu(U_r\cap E_2)}{\mu(U_r)}+\frac{\mu(U_r\setminus E_1)}{\mu(U_r)}\\
	&=\fint_{U_r}|\chi_{E_2}-\chi_{F_2}(x)|d\mu+\fint_{U_r}|\chi_{E_1}-\chi_{F_1}(x)|d\mu\to 0
\end{align*} 
as $r\to 0^+.$  Similarly, for $x\in\partial\Omega^*\setminus F_1,$ we have that 
\begin{align*}
\fint_{U_r}|\chi_{E_1\setminus E_2}-\chi_{F_1}(x)|d\mu=\fint_{U_r}\chi_{E_1\setminus E_2}d\mu\le\fint_{U_r}\chi_{E_1}d\mu=\fint_{U_r}|\chi_{E_1}-\chi_{F_1}(x)|d\mu\to 0
\end{align*}
as $r\to 0^+.$  Thus, we have that $T\chi_{E_1\setminus E_2}=\chi_{F_1}$ $\Ha$-a.e.\ on $\partial\Omega.$  Likewise, a symmetric argument shows that $T\chi_{E_2\setminus E_1}=\chi_{F_2}$ $\Ha$-a.e.\ on $\partial\Omega.$

Now, by \cite[Proposition~4.7]{MMJr}, we have that 
\begin{align*}
P(E_1\setminus(E_2\cap\Omega),\Omega)&=P(E_1\setminus E_2,\Omega)\\
	&=P(E_1\cap(\Omega\setminus E_2),\Omega)\\
	&\le P(E_1,\Omega)+P(\Omega\setminus E_2,\Omega)-P(E_1\cup(\Omega\setminus E_2),\Omega)\\
	&=P(E_1,\Omega)+P(E_2,\Omega)-P(E_2\setminus E_1,\Omega)\\
	&=P(E_1,\Omega)+P(E_2,\Omega)-P(E_2\setminus (E_1\cap\Omega),\Omega).
\end{align*}
Thus, we have that 
$$P(E_1\setminus(E_2\cap\Omega),\Omega)+P(E_2\setminus (E_1\cap\Omega),\Omega)\le P(E_1,\Omega)+P(E_2,\Omega).$$
If $P(E_1\setminus (E_2\cap\Omega),\Omega)>P(E_1,\Omega),$ then we have that $P(E_2\setminus (E_1\cap\Omega),\Omega)<P(E_2,\Omega)$.  However, since $T\chi_{E_2\setminus E_1}=\chi_{F_2}$ $\Ha$-a.e.\ on $\partial\Omega,$ and since $E_2$ is a solution set for $\chi_{F_2},$ this is a contradiction.  Therefore, we have that $P(E_1\setminus (E_2\cap\Omega),\Omega)\le P(E_1,\Omega),$ and so $E_1\setminus (E_2\cap\Omega)$ is a solution set for $\chi_{F_1}.$ Thus, since $E_1$ is the minimal solution set, we have that $\mu(E_1\cap E_2\cap\Omega)=0.$     
\end{proof}

The following lemma follows from a similar argument.

\begin{lem}\label{lem:SubsetSoln}
Let $F_1,F_2\subset X$ be open sets such that $P(F_1,X), P(F_2,X)<\infty$ and $\Ha(\partial F_1\cap\partial\Omega)=0=\Ha(\partial F_2\cap\partial\Omega).$  Suppose also that $F_1\cap\partial\Omega\subset F_2,$ and let $E_1, E_2\subset X$ be minimal solution sets to the Dirichlet problem with boundary data $\chi_{F_1}$ and $\chi_{F_2}$ respectively.  Then, $E_1\cap\Omega\sqsubset E_2.$  
\end{lem}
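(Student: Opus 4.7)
The plan is to mirror the proof of Lemma~\ref{lem:DisjtSoln}, using the pair $(E_1\cap E_2,\,E_1\cup E_2)$ in place of $(E_1\setminus E_2,\,E_2\setminus E_1)$, and to exploit the containment $F_1\cap\partial\Omega\subset F_2$ (rather than disjointness) to confirm these sets carry the correct traces on $\partial\Omega$.

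First I would verify that $T\chi_{E_1\cap E_2}=\chi_{F_1}$ and $T\chi_{E_1\cup E_2}=\chi_{F_2}$ hold $\Ha$-a.e.\ on $\partial\Omega$. Let $\partial\Omega^*\subset\partial\Omega$ be the full-measure subset on which $T\chi_{E_i}(x)=\chi_{F_i}(x)$ for $i=1,2$, and set $U_r:=B(x,r)\cap\Omega$. For $x\in\partial\Omega^*\cap F_1$, the hypothesis $F_1\cap\partial\Omega\subset F_2$ forces $x\in F_2$, and
\[
\fint_{U_r}|\chi_{E_1\cap E_2}-1|\,d\mu=\frac{\mu(U_r\setminus(E_1\cap E_2))}{\mu(U_r)}\le\fint_{U_r}|\chi_{E_1}-1|\,d\mu+\fint_{U_r}|\chi_{E_2}-1|\,d\mu\to 0
\]
gives $T\chi_{E_1\cap E_2}(x)=1=\chi_{F_1}(x)$. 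For $x\in\partial\Omega^*\setminus F_1$ the crude bound
\[
\fint_{U_r}\chi_{E_1\cap E_2}\,d\mu\le\fint_{U_r}\chi_{E_1}\,d\mu\to 0
\]
gives $T\chi_{E_1\cap E_2}(x)=0=\chi_{F_1}(x)$. A symmetric split based on whether $x\in F_2$ --- again invoking the containment, now to conclude $x\notin F_1$ whenever $x\notin F_2$ --- establishes $T\chi_{E_1\cup E_2}=\chi_{F_2}$ $\Ha$-a.e.\ on $\partial\Omega$.

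Next I would invoke the submodularity of perimeter from \cite[Proposition~4.7]{MMJr} to obtain
\[
P(E_1\cap E_2,\Omega)+P(E_1\cup E_2,\Omega)\le P(E_1,\Omega)+P(E_2,\Omega).
\]
If $P(E_1\cap E_2,\Omega)>P(E_1,\Omega)$, then $P(E_1\cup E_2,\Omega)<P(E_2,\Omega)$, which contradicts that $E_2$ is a solution for $\chi_{F_2}$ since $\chi_{E_1\cup E_2}$ now carries the correct trace. Hence $P(E_1\cap E_2,\Omega)\le P(E_1,\Omega)$, and $\chi_{E_1\cap E_2}$ is itself a solution for the Dirichlet problem with boundary data $\chi_{F_1}$.

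Finally, minimality of $E_1$ yields $\chi_{E_1}\le\chi_{E_1\cap E_2}$ $\mu$-a.e.\ in $\Omega$, which combined with the trivial reverse inequality $\chi_{E_1\cap E_2}\le\chi_{E_1}$ forces $\mu((E_1\cap\Omega)\setminus E_2)=0$, i.e.\ $E_1\cap\Omega\sqsubset E_2$. I do not anticipate any serious obstacle; the only delicate point is the asymmetric trace computation in the first step, where the containment $F_1\cap\partial\Omega\subset F_2$ must be used carefully to make $\chi_{E_1\cap E_2}$ and $\chi_{E_1\cup E_2}$ simultaneously valid competitors for the $\chi_{F_1}$ and $\chi_{F_2}$ problems, respectively.
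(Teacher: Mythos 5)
Your proposal is correct and is exactly the ``similar argument'' the paper alludes to: it transplants the proof of Lemma~\ref{lem:DisjtSoln}, replacing the competitor pair $(E_1\setminus E_2,\,E_2\setminus E_1)$ by $(E_1\cap E_2,\,E_1\cup E_2)$, verifying the traces via the containment $F_1\cap\partial\Omega\subset F_2$, and concluding by submodularity of perimeter and minimality of $E_1$. The only cosmetic point is that, to keep the competitors unchanged outside $\Omega$ (as the paper does by writing $E_1\setminus(E_2\cap\Omega)$ rather than $E_1\setminus E_2$), one should formally use $E_1\setminus(\Omega\setminus E_2)$ and $E_2\cup(E_1\cap\Omega)$; since $P(\cdot,\Omega)$ and the trace depend only on the sets within $\Omega$, this changes nothing in your estimates.
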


Since $X$ is doubling, it follows that for each $K\ge 1,$ there exists $C_K>0$ such that for every $r>0,$ we can find a finite cover $\{B(x_i,r)\}_{i\in J_r}\subset X$ of $\partial\Omega$ with $x_i\in\partial\Omega$ such that $\sum_{i\in J_r}\chi_{B(x_i,Kr)}\le C_K$.  Let $\eps>0,$ and consider such a cover $\{B(x_i,\eps/5)\}_{i\in J_\eps}.$  By the 5B-Lemma, there exists a disjoint subcollection $\{B(x_i,\eps/5)\}_{i\in I_\eps\subset J_\eps}$ such that $\partial\Omega\subset\bigcup_{i\in I_\eps}B(x_i,\eps).$  Thus, we obtain a finite cover $\{B_{i,\eps}:=B(x_i,\eps)\}_{i\in I_\eps}$ of $\partial\Omega$ such that the set $\{x_i\}_{i\in I_\eps}$ is $2\eps/5$-separated, and for all $K\ge 1,$ we have that 
\begin{equation}\label{BoundedOverlap}
\sum_{i\in I_\eps}\chi_{KB_{i,\eps}}\le C_K.
\end{equation} 
We can then find a Lipschitz partition of unity $\{\phii_i^\eps\}_{i\in I_\eps}$ subject to this cover.  That is, for each $i\in I_\eps,$ there is a $C/\eps$-Lipschitz function $\phii_i^\eps:X\to[0,1],$ (with $C$ depending only on the doubling constant) such that $\overline{\supt(\phii_i^\eps)}\subset 2B_{i,\eps},$ and $\sum_i\phii_i^\eps=1$ on $\partial\Omega.$  For proof of these facts, see for example \cite[Appendix~B]{GR}, \cite{H}, and \cite{HKST}.

For $f\in L^1(\partial\Omega),$ define
$$f_{B_{i,\eps}}:=\fint_{B_{i,\eps}\cap\partial\Omega}f\,d\Ha,$$ and let $f_\eps:\partial\Omega\to\R$ be given by 
$$f_\eps:=\sum_{i\in I_\eps} f_{B_{i,\eps}}\phii_i^\eps\big|_{\partial\Omega}.$$  Then, as $f_\eps$ is continuous, there is a minimal solution $u_\eps\in BV(\Omega)$ to the Dirichlet problem with boundary data $f_\eps,$ by Proposition \ref{prop:MinSoln}. 

\begin{lem}\label{lem:DiscConvBound}
Let $u_\eps$ be the minimal solution to the Dirichlet problem with boundary data $f_\eps$, as defined above.  Then, 
$$\sup_{\eps>0}\left(\|u_\eps\|_{L^1(\Omega)}+\|Du_\eps\|(\Omega)\right)<\infty.$$  That is, $\{u_\eps\}_{\eps>0}$ is bounded in $BV(\Omega).$ 
\end{lem}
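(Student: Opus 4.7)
The plan is to control $\|u_\eps\|_{BV(\Omega)}$ in three stages, obtaining uniform bounds first on $\|f_\eps\|_{L^1(\partial\Omega)}$, then on $\|Du_\eps\|(\Omega)$, and finally on $\|u_\eps\|_{L^1(\Omega)}$. First I would show $\|f_\eps\|_{L^1(\partial\Omega)}\le C\|f\|_{L^1(\partial\Omega)}$. Using $0\le\phii_i^\eps\le 1$ with $\overline{\supt\phii_i^\eps}\subset 2B_{i,\eps}$, the bound $|f_{B_{i,\eps}}|\le\Ha(B_{i,\eps}\cap\partial\Omega)^{-1}\int_{B_{i,\eps}\cap\partial\Omega}|f|\,d\Ha$, and doubling of $\Ha\big|_{\partial\Omega}$, one has $\|f_{B_{i,\eps}}\phii_i^\eps\big|_{\partial\Omega}\|_{L^1(\partial\Omega)}\le C\int_{B_{i,\eps}\cap\partial\Omega}|f|\,d\Ha$; summing over $i\in I_\eps$ and applying the bounded overlap from \eqref{BoundedOverlap} with $K=2$ gives the claim.

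Next, since $u_\eps$ is a solution with boundary data $f_\eps$, and since $Ef_\eps\in BV(\Omega)$ has trace $f_\eps$ by Proposition~\ref{prop:ExtBounds}, the minimizing property yields
$$\|Du_\eps\|(\Omega)\le\|DEf_\eps\|(\Omega)\le C(1+\Ha(\partial\Omega))\|f_\eps\|_{L^1(\partial\Omega)}\le C\|f\|_{L^1(\partial\Omega)},$$
with $C$ independent of $\eps$.

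The main obstacle is the uniform $L^1$-bound. Since $\mu(X\setminus\Omega)>0$, I would choose a ball $B\subset X$ containing $\Omega$ and a fixed $0<\gamma<1$ with $\mu(\Omega)\le\gamma\mu(B)$; such a $B$ is produced by the doubling condition. Setting $\tilde u_\eps:=u_\eps\chi_\Omega$, the key technical task is to verify that $\tilde u_\eps\in BV(X)$ with
$$\|D\tilde u_\eps\|(X)\le\|Du_\eps\|(\Omega)+C\|f_\eps\|_{L^1(\partial\Omega)}.$$
The extra term accounts for the jump of $\tilde u_\eps$ across $\partial\Omega$; to estimate it, I would argue level by level via the coarea formula \eqref{eq:Coarea}, combining the identity $T\chi_{\{u_\eps>t\}}=\chi_{\{f_\eps>t\}}$ from Lemma~\ref{lem:SupLevTrace} with the equivalence $P(\,\cdot\,,A)\simeq\Ha(A\cap\partial_M\,\cdot\,)$ from \cite{A,AMP}, noting that zero-extension of $\chi_{\{u_\eps>t\}\cap\Omega}$ adds perimeter comparable to $\Ha(\{f_\eps>t\}\cap\partial\Omega)$, which integrates in $t$ to $\|f_\eps\|_{L^1(\partial\Omega)}$.

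Once $\tilde u_\eps$ is realized as a BV function on $X$ with the desired total variation control, Lemma~\ref{lem:MazyaIneq} applied to $\tilde u_\eps$ on $B$, with $A=\{x\in B:|\tilde u_\eps(x)|>0\}\subset\Omega$ satisfying $\mu(A)\le\gamma\mu(B)$, gives
$$\left(\fint_B|\tilde u_\eps|^{Q/(Q-1)}\,d\mu\right)^{(Q-1)/Q}\le\frac{C\rad(B)}{1-\gamma^{1/Q}}\frac{\|D\tilde u_\eps\|(2\lambda B)}{\mu(2\lambda B)}.$$
Hölder's inequality on $\Omega\subset B$ then yields $\|u_\eps\|_{L^1(\Omega)}\le C(\|Du_\eps\|(\Omega)+\|f_\eps\|_{L^1(\partial\Omega)})\le C\|f\|_{L^1(\partial\Omega)}$, with $C$ depending only on the fixed data of $X$ and $\Omega$. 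The heart of the argument lies in the coarea-based jump estimate, since the bound must be genuinely uniform in $\eps$ even as $\|f_\eps\|_{L^\infty(\partial\Omega)}$ may blow up.
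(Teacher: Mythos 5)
Your stages one and two (the uniform bound $\|f_\eps\|_{L^1(\partial\Omega)}\le C\|f\|_{L^1(\partial\Omega)}$ via bounded overlap and doubling of $\Ha\big|_{\partial\Omega}$, and then $\|Du_\eps\|(\Omega)\le\|DEf_\eps\|(\Omega)\le C\|f\|_{L^1(\partial\Omega)}$ via Proposition~\ref{prop:ExtBounds}) coincide with the paper's argument. Where you genuinely diverge is the uniform $L^1$-bound. The paper does not zero-extend $u_\eps$ itself; it sets $v_\eps:=u_\eps-Ef_\eps$, observes that $Tv_\eps=0$ $\Ha$-a.e.\ on $\partial\Omega$, and invokes the zero-extension theorem \cite[Theorem~6.1]{LS} to conclude that the zero extension $\hat v_\eps$ lies in $BV(X)$ with $\|D\hat v_\eps\|(X\setminus\Omega)=0$; Lemma~\ref{lem:MazyaIneq} and H\"older then bound $\|v_\eps\|_{L^1(\Omega)}$, and the triangle inequality together with the $L^1$-bound on $Ef_\eps$ from Proposition~\ref{prop:ExtBounds} finishes. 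Your route instead extends $u_\eps$ by zero and pays for the jump across $\partial\Omega$ with a coarea argument. This can be made to work, and the level-set bookkeeping you indicate is right (for $t\ge 0$ the boundary term is $\Ha(\{f_\eps>t\}\cap\partial\Omega)$, for $t<0$ it is $\Ha(\{f_\eps\le t\}\cap\partial\Omega)$, and these integrate to $\|f_\eps\|_{L^1(\partial\Omega)}$), but it is heavier than what you acknowledge: you must first verify that $\partial_M\bigl(\{u_\eps>t\}\cap\Omega\bigr)\cap\partial\Omega$ is $\Ha$-essentially contained in $\{f_\eps>t\}$ (which does follow from Lemma~\ref{lem:SupLevTrace}, since vanishing trace of $\chi_{\{u_\eps>t\}}$ at $x$ forces zero upper density of $\{u_\eps>t\}\cap\Omega$ at $x$), and, more seriously, the comparison $P(E,A)\simeq\Ha(A\cap\partial_M E)$ quoted from \cite{A,AMP} is stated for sets already known to have finite perimeter, so to conclude that $\{u_\eps>t\}\cap\Omega$ has finite perimeter in $X$ from finiteness of $\Ha(\partial_M(\cdot))$ you need the converse, Federer-type implication, which is additional machinery the paper never uses. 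The paper's detour through $v_\eps$ buys exactly the avoidance of this jump estimate: the zero-trace extension theorem packages it for free. If you keep your route, state and cite the Federer-type characterization explicitly and carry out the measure-theoretic boundary containment; otherwise the argument as written has a gap at precisely the step you identify as its heart.
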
  

\begin{proof}  We have that
\begin{align*}
\int_{\partial\Omega}|f_\eps|d\Ha&\le\sum_{i\in I_\eps}\int_{\partial\Omega}|f_{B_{i,\eps}}\phii_{i}^\eps|d\Ha\\
	&\le\sum_{i\in I_\eps}\sum_{j\in I_\eps}\int_{B_{j,\eps}\cap\partial\Omega}|f_{B_{i,\eps}}\phii_i^\eps|d\Ha.
\end{align*}
For $i\in I_\eps,$ let $J_{i,\eps}:=\{j\in I_\eps: B_{j,\eps}\cap 2B_{i,\eps}\ne\varnothing\}.$  If $j\in J_{i,\eps},$ then $B_{j,\eps}\subset 4B_{i,\eps},$ and so by the doubling property and the fact that $\{x_i\}_{i\in I_\eps}$ is a $2\eps/5$-separated set, there exists $C>0$ depending only on the doubling constant such that $|J_{i,\eps}|\le C,$ where $|J_{i,\eps}|$ denotes the number of elements in $J_{i,\eps}.$  Since $\phii_i^\eps$ is compactly supported in $2B_{i,\eps},$ it follows that $\phii_i^\eps=0$ on $B_{j,\eps}$ for $j\in I_\eps\setminus J_{i,\eps}.$  Therefore, we have that 
\begin{align}
\int_{\partial\Omega}|f_\eps|d\Ha&\le\sum_{i\in I_\eps}\sum_{j\in J_{i,\eps}}\int_{B_{j,\eps}\cap\partial\Omega}|f_{B_{i,\eps}}\phii_i^\eps|d\Ha\nonumber\\
	&\le\sum_{i\in I_\eps}\sum_{j\in J_{i,\eps}}\int_{B_{j,\eps}\cap\partial\Omega}\left(\fint_{B_{i,\eps}\cap\partial\Omega}|f|d\Ha\right)d\Ha\nonumber\\
	&=\sum_{i\in I_\eps}\sum_{j\in J_{i,\eps}}\frac{\Ha(B_{j,\eps}\cap\partial\Omega)}{\Ha(B_{i,\eps}\cap\partial\Omega)}\int_{B_{i,\eps}\cap\partial\Omega}|f|d\Ha\nonumber\\
	&\le C\sum_{i\in I_\eps}\int_{B_{i,\eps}\cap\partial\Omega}|f|d\Ha\le C\int_{\partial\Omega}|f|d\Ha,\label{f_epsBound}
\end{align}
where the constant $C>0$ depends on the doubling constant of $\Ha$ and the bounded overlap constant from \eqref{BoundedOverlap}.

Let $Ef_\eps:\Omega\to\R$ be the extension of $f_\eps$ to $\Omega$ given by Proposition $\ref{prop:ExtBounds}$.  Since $u_\eps\in BV(\Omega)$ is a solution to the Dirichlet problem with boundary data $f_\eps,$ and by ($\ref{f_epsBound}$), we have that 
\begin{equation}\label{eq:EnergyBound}
\|Du_\eps\|(\Omega)\le\|DEf_\eps\|(\Omega)\le C\int_{\partial\Omega}|f_\eps|d\Ha\le C\int_{\partial\Omega}|f|d\Ha.
\end{equation}

Now, let $v_\eps:\Omega\to\R$ be given by $v_\eps:=u_\eps-Ef_\eps,$ and let $\hat v_\eps$ be the zero extension of $v_\eps$ to all of $X.$  Since $Tu_\eps=TEf_\eps=f_\eps$ $\Ha$-a.e.\ on $\partial\Omega,$ it follows that 
$$\lim_{r\to 0^+}\frac{1}{\mu(B(x,r))}\int_{B(x,r)\cap\Omega}|v_\eps|d\mu=0$$ for $\Ha$-a.e.\ $x\in\partial\Omega.$  Thus, by \cite[Theorem~6.1]{LS}, it follows that $\hat v_\eps\in BV(X)$ and $\|D\hat v_\eps\|(X\setminus\Omega)=0.$

  Since $\mu(X\setminus\Omega)>0,$ we can find a ball $B\subset X$ such that $\Omega\subset B$ and $\mu(B\setminus\Omega)>0.$  By H\"older's Inequality, we have that 
\begin{align*}\int_\Omega|v_\eps|d\mu\le\int_B|\hat v_\eps|d\mu&\le\mu(B)^{1/Q}\left(\int_B|\hat v_\eps|^{\frac{Q}{Q-1}}d\mu\right)^{\frac{Q-1}{Q}}\\
	&=\mu(B)\left(\fint_B|\hat v_\eps|^{\frac{Q}{Q-1}}d\mu\right)^{\frac{Q-1}{Q}},
\end{align*}
where $Q>1$ is the exponent from \eqref{eq:LMBExp}.  Since $\hat v_\eps=0$ on $B\setminus\Omega,$ it follows from Lemma $\ref{lem:MazyaIneq}$ that 
\begin{align*}
\int_\Omega|v_\eps|d\mu&\le\mu(B)\frac{C\rad(B)}{1-(\mu(\Omega)/\mu(B))^{1/Q}}\frac{\|D\hat v_\eps\|(2\lambda B)}{\mu(2\lambda B)}\\
	&\le C\left(\|D\hat v_\eps\|(\Omega)+\|D\hat v_\eps\|(X\setminus\Omega)\right)\\
	&=C\|D v_\eps\|(\Omega)\\
	&\le C\left(\|Du_\eps\|(\Omega)+\|DEf_\eps\|(\Omega)\right).
\end{align*}
Here the constant $C>0$ depends on $\Omega$, $B$, and the doubling and Poincar\'e constants, but is independent of $\eps,$ $f_\eps,$ and $u_\eps.$  

Then by $\eqref{eq:EnergyBound},$ we have that 
$$\int_\Omega|v_\eps|d\mu\le C\int_{\partial\Omega}|f|d\Ha,$$ and by the Triangle Inequality, it follows that 
$$\int_\Omega|u_\eps|d\mu\le C\int_{\partial\Omega}|f|d\Ha+\int_\Omega|Ef_\eps|d\mu.$$  Therefore, by Proposition $\ref{prop:ExtBounds}$ and ($\ref{f_epsBound}$), we have that 
$$\int_{\Omega}|u_\eps|d\mu\le C\int_{\partial\Omega}|f|d\Ha.$$
  
\end{proof}

\begin{lem}\label{lem:UnifCty}
Let $f$ be continuous at $x\in\partial\Omega.$  Then, for all $\eta>0$ there exists $\delta>0$ such that for all $0<\eps<\delta$ and for all $y\in B(x,\delta)\cap\partial\Omega,$ we have that $|f_\eps(y)-f(x)|<\eta.$ 
\end{lem}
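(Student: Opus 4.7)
The plan is to exploit three facts in combination: (i) the continuity of $f$ at $x$, (ii) the fact that the support of each $\phii_i^\eps$ is contained in $2B_{i,\eps}$, so that only finitely many (and geometrically nearby) terms of the sum $f_\eps(y) = \sum_{i\in I_\eps} f_{B_{i,\eps}} \phii_i^\eps(y)$ contribute at any given $y$, and (iii) the fact that $\sum_{i\in I_\eps}\phii_i^\eps \equiv 1$ on $\partial\Omega$, which lets us rewrite $f_\eps(y)-f(x)$ as an average of differences.

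Fix $\eta>0$. By continuity of $f$ at $x$, choose $\delta_0>0$ such that $|f(z)-f(x)|<\eta$ for every $z\in B(x,\delta_0)\cap\partial\Omega$. Set $\delta:=\delta_0/5$ and pick any $0<\eps<\delta$ and $y\in B(x,\delta)\cap\partial\Omega$. The first step is a geometric localization: if $\phii_i^\eps(y)\ne 0$, then $y\in 2B_{i,\eps}=B(x_i,2\eps)$, so $d(x,x_i)\le d(x,y)+d(y,x_i)<\delta+2\eps<3\delta$, and hence $B_{i,\eps}=B(x_i,\eps)\subset B(x,4\delta)\subset B(x,\delta_0)$. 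Consequently $B_{i,\eps}\cap\partial\Omega\subset B(x,\delta_0)\cap\partial\Omega$, so by the continuity estimate,
\[
|f_{B_{i,\eps}}-f(x)|
=\left|\fint_{B_{i,\eps}\cap\partial\Omega}\bigl(f-f(x)\bigr)\,d\Ha\right|
\le\fint_{B_{i,\eps}\cap\partial\Omega}|f-f(x)|\,d\Ha<\eta
\]
for every index $i$ contributing to $f_\eps(y)$.

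Now since $y\in\partial\Omega$, we have $\sum_{i\in I_\eps}\phii_i^\eps(y)=1$, so
\[
f_\eps(y)-f(x)=\sum_{i\in I_\eps}\bigl(f_{B_{i,\eps}}-f(x)\bigr)\phii_i^\eps(y).
\]
Using the nonnegativity of $\phii_i^\eps$ and the bound obtained in the previous paragraph, only indices $i$ with $\phii_i^\eps(y)>0$ contribute, and for each such index the coefficient is controlled by $\eta$; thus
\[
|f_\eps(y)-f(x)|\le\sum_{i\in I_\eps}|f_{B_{i,\eps}}-f(x)|\,\phii_i^\eps(y)
<\eta\sum_{i\in I_\eps}\phii_i^\eps(y)=\eta,
\]
which gives the required conclusion.

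I do not anticipate a serious obstacle here; the main care point is to choose the constant relating $\delta$ to $\delta_0$ so that, taking into account both the restriction $\eps<\delta$ and the doubling of the support (from $B_{i,\eps}$ to $2B_{i,\eps}$), every ball $B_{i,\eps}$ with $\phii_i^\eps(y)\ne 0$ fits inside the continuity ball $B(x,\delta_0)$. The choice $\delta=\delta_0/5$ above is comfortable; any strict upper bound less than $\delta_0/4$ would do.
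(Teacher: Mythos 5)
Your proof is correct and follows essentially the same route as the paper: write $f_\eps(y)-f(x)=\sum_i\bigl(f_{B_{i,\eps}}-f(x)\bigr)\phii_i^\eps(y)$ using the partition of unity, note that only indices with $y\in 2B_{i,\eps}$ contribute, and check that for those indices $B_{i,\eps}$ lies inside the continuity ball $B(x,\delta_0)$ so that each averaged difference is below $\eta$. The only difference is the cosmetic choice of constant ($\delta=\delta_0/5$ versus the paper's $\delta_0/10$), both of which work.
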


\begin{proof}  Let $\eta>0.$  By the continuity of $f$ at $x,$ there exists $\delta_0>0$ such that if $y\in B(x,\delta_0)\cap\partial\Omega,$ then $|f(y)-f(x)|<\eta.$  Let $\delta:=\delta_0/10.$  Then, for $y\in B(x,\delta)\cap\partial\Omega,$ and for $0<\eps<\delta,$ we have that 
\begin{align*}
|f_\eps(y)-f(x)|&=\left|\sum_{i\in I_\eps}f_{B_{i,\eps}}\phii_i^\eps(y)-f(x)\sum_{i\in I_\eps}\phii_i^\eps(y)\right|\\
	&\le\sum_{i\in I_\eps}\phii_i^\eps(y)|f_{B_{i,\eps}}-f(x)|.  
\end{align*} 
Let $J_{y,\eps}:=\{i\in I_\eps: y\in 2B_{i,\eps}\}.$  Then, $\phii_i^\eps(y)=0$ for all $i\in I_\eps\setminus J_{y,\eps}.$  Thus, we have that 
$$|f_\eps(y)-f(x)|\le\sum_{i\in J_{y,\eps}}\phii_i^\eps(y)|f_{B_{i,\eps}}-f(x)|.$$  
Since $0<\eps<\delta,$ it follows that for $i\in J_{y,\eps},$ we have that $B_{i,\eps}\subset B(x,\delta).$  Therefore, 
$$|f_{B_{i,\eps}}-f(x)|\le\fint_{B_{i,\eps}\cap\partial\Omega}|f-f(x)|d\Ha<\eta,$$ and so it follows that $|f_\eps(y)-f(x)|<\eta.$

\end{proof}

We are now able to establish Theorem~\ref{thm:CtyPointSoln}.

\begin{proof}[Proof of Theorem~\ref{thm:CtyPointSoln}]  Since $(u_\eps)_{\eps>0}$ is bounded in $BV(\Omega),$ it follows from the compact embedding theorem \cite[Theorem~3.7]{MMJr} that there exists $u\in BV_{loc}(\Omega)$ and a subsequence, also denoted $u_\eps\in BV(\Omega),$ such that $u_\eps\to u$ in $L^1_{loc}(\Omega),$ and passing to a further subsequence if necessary, we have that $u_\eps\to u$ pointwise a.e.\ in $\Omega.$  Hence by Fatou's lemma and Lemma~\ref{lem:DiscConvBound}, we have that 
$$\int_{\Omega}|u|d\mu\le\liminf_{\eps\to 0}\int_{\Omega}|u_{\eps}|d\mu<\infty.$$  By lower semi-continuity of the BV energy and $\eqref{eq:EnergyBound},$ we have that $\|Du\|(\Omega)<\infty$, and so $u\in BV(\Omega).$  Furthermore by \cite[Proposition~3.1]{HKLS}, it follows that $u$ is a function of least gradient.

Let $f$ be continuous at $x\in\partial\Omega,$ and let $\eta>0.$  By Lemma \ref{lem:UnifCty}, there exists $\delta>0$ such that for all $0<\eps<\delta,$ and for all $y\in B(x,\delta)\cap\partial\Omega,$ we have that $$|f_\eps(y)-f(x)|<\eta.$$  

Let $r_x>0$ be as in Definition \ref{defn:MeanCurv}.  Then by the coarea formula, and since $\Ha(\partial\Omega)<\infty,$ there exists $\delta_x>0$ such that 
$$\min\{\delta,r_x\}/2<\delta_x<\min\{\delta,r_x\},$$ with $P(B(x,\delta_x),X)<\infty$ and $\Ha(\partial B(x,\delta_x)\cap\partial\Omega)=0.$  Denote $F_\eta:=B(x,\delta_x),$ and let $E_\eta\subset X$ be a minimal solution set for $\chi_{F_\eta}.$  Let $\widehat\delta:=\min\{\delta_x,\phi_x(\delta_x)\},$ where $\phi_x$ is as in Definition \ref{defn:MeanCurv}.  Then, for all $0<r<\widehat\delta,$ it follows that $B(x,r)\cap\Omega\sqsubset E_\eta.$

For $0<\eps<\widehat\delta,$ let $F^\eps_t:=\{\Ext f_\eps>t\},$ where $\Ext f_\eps$ is the extension of $f_\eps$ to $X$ given by Proposition \ref{prop:ExtToX}.  Recall that $u_\eps:X\to\R$ is given by 
$$u_\eps(y)=\sup\{t\in\mathcal I_\eps:y\in E^\eps_t\},$$ 
where $\mathcal I_\eps$ and $E^\eps_t$ are as in the proof of Proposition \ref{prop:MinSoln}.  By the choice of $\widehat\delta,$ we note that for $t\in\mathcal I_\eps$ such that $t\ge f(x)+\eta,$ it follows that $F_t^\eps\cap F_\eta\cap\partial\Omega=\varnothing.$  Thus, by Lemma \ref{lem:DisjtSoln}, we have that $\mu(E_t^\eps\cap E_\eta\cap\Omega)=0.$  Similarly, for $t\in\mathcal I_\eps$ such that $t\le f(x)-\eta,$ we have that $F_\eta\cap\partial\Omega\subset F_t^\eps,$ and so by Lemma \ref{lem:SubsetSoln}, it follows that $E_\eta\cap\Omega\sqsubset E_t^\eps.$  Therefore, for all $0<\eps<\widehat\delta$ and for all $0<r<\widehat\delta,$ it follows from the construction of $u_\eps$ that 
$$|u_\eps(y)-f(x)|\le\eta$$ for $\mu$-a.e.\ $y\in B(x,r)\cap\Omega.$ 

Therefore, for all $\eta>0,$ we have that 
$$\lim_{r\to 0^+}\lim_{\eps\to 0^+}\fint_{B(x,r)\cap\Omega}|u_\eps-f(x)|d\mu\le\eta,$$ and as $u_\eps\to u$ pointwise a.e., it follows from the dominated convergence theorem that 
$$\lim_{r\to 0^+}\fint_{B(x,r)\cap\Omega}|u-f(x)|d\mu=0.$$  Thus, we have that $Tu(x)=f(x).$               

\end{proof}

We note that in the proof of Theorem~\ref{thm:CtyPointSoln}, we apply Lemmas~\ref{lem:DisjtSoln} and \ref{lem:SubsetSoln} to the regularized sets 
$$E^\eps_t=\{x\in X:\chi^\vee_{\widetilde E^\eps_t}(x)>0\},$$
where $\widetilde E^\eps_t$ is the minimal solution set for $\chi_{F^\eps_t}.$  However since $\mu(E^\eps_t\Delta\widetilde E^\eps_t)=0,$ the lemmas still hold.

\begin{remark}\label{rem:Cylinder}
We note that Theorem~\ref{thm:CtyPointSoln} generalizes \cite[Theorem~3.1]{G2} to the metric setting and also extends that result to domains in $\R^n$ which are not strictly convex but satisfy the positive mean curvature condition.  For example, consider the domain in $\R^3$ constructed by attaching half of the unit ball to either end of the cylinder $\D\times[0,1].$  The boundary of this capped cylinder has positive mean curvature, but is not strictly convex.  See the discussion from \cite[Section~4]{LMSS} and  \cite[Section~3]{SWZ} relating the notion of positive mean curvature given above to that of domains in $\R^n$ with smooth boundary.   
\end{remark}

\begin{remark}
If we consider a measurable set $F\subset\partial\Omega$ such that $\Ha(\wtil\partial F)=0,$ as in Theorem~\ref{thm:HZeroBoundary}, we see that existence of a solution to the Dirichlet problem with boundary data $\chi_F$ follows immediately from Theorem~\ref{thm:CtyPointSoln}, since $\chi_F$ is continuous at all points $x\in\partial\Omega\setminus\wtil\partial F.$  Thus we obtain another proof of the existence part of Theorem~\ref{thm:HZeroBoundary}, though it is unclear if minimality of the solution also follows from these results, as it does in Section 4. 
\end{remark}

\begin{remark}\label{rem:NotSharp2}
As with Theorem~\ref{thm:SeqApprox} and Theorem~\ref{thm:HZeroBoundary}, we point out that the condition on $f$ of continuity $\Ha$-a.e.\ in Theorem~\ref{thm:CtyPointSoln} is not sharp, as illustrated by Example~\ref{ex:NonLin}.  There, a solution exists for the Dirichlet problem with boundary data $g$, but $g$ is discontinuous on the set $K_{1/4},$ which has positive $\Ha$-measure. 

\end{remark}

\noindent Department of Mathematical Sciences, P.O.~Box 210025, University of Cincinnati, Cincinnati, OH~45221-0025, U.S.A.\\
\noindent E-mail: {\tt klinejp@mail.uc.edu}

\end{document}